\newcommand{\BN}{{\mathbb{N}}}
\newcommand{\BR}{{\mathbb{R}}}
\newcommand{\gD}{\Delta}
\newcommand{\gd}{\delta}
\newcommand{\gb}{\beta}
\newcommand{\gC}{\Gamma}
\newcommand{\gc}{\gamma}
\newcommand{\gs}{\sigma}
\newcommand{\gS}{\Sigma}
\newcommand{\gO}{\Omega}
\newcommand{\gep}{\epsilon}
\newcommand{\gL}{\Lambda}
\newcommand{\ga}{\alpha}
\newcommand{\gt}{\tau}
\newcommand{\ti}[1]{\tilde{#1}}
\newcommand{\rank}{\text{rank}}
\newcommand{\SL}{\text{SL}}
\def\sub{\text{Sub}}
\newtheorem{prop}{Proposition}[section]
\newtheorem{thm}[prop]{Theorem}
\newtheorem{lem}[prop]{Lemma}
\newtheorem{cor}[prop]{Corollary}
\theoremstyle{definition}
\newtheorem{defn}[prop]{Definition}
\newtheorem{rem}[prop]{Remark}
\newtheorem{exam}[prop]{Example}
\long\def\@savemarbox#1#2{\global\setbox#1\vtop{\hsize\marginparwidth 
  \@parboxrestore\tiny\raggedright #2}}
\begin{document}
\author{Mikolaj Fraczyk and Tsachik Gelander}



\date{\today}

\title{Infinite Volume and Infinite Injectivity Radius}

\maketitle

\begin{abstract}
We prove the following conjecture of Margulis. Let $G$ be a higher rank simple Lie group and let $\gL\le G$ be a discrete subgroup of infinite covolume. Then, the locally symmetric space $\gL\backslash G/K$ admits injected balls of any radius. This can be considered as a geometric interpretation of the celebrated Margulis normal subgroup theorem. However, it applies to general discrete subgroups not necessarily associated to lattices.
Yet, the result is new even for subgroups of infinite index of lattices. 
We establish similar results for higher rank semisimple groups with Kazhdan's property (T). We prove a stiffness result for discrete stationary random subgroups in higher rank semisimple groups and a stationary variant of the Stuck--Zimmer theorem for higher rank semisimple groups with property (T). We also show that a stationary limit of a measure supported on discrete subgroups is almost surely discrete.
\end{abstract}

\section{Introduction}

\subsection{The statement for simple groups}
The main motivation for this paper was to prove the following result which confirms a conjecture of G.A. Margulis:

\begin{thm}\label{thm:simple}
Let $G$ be a connected centre-free simple Lie group of real rank at least $2$ and let $X=G/K$ be the associated symmetric space. Let $\gL\le G$ be a discrete group of infinite covolume.
Then for every $r>0$ there is a point $p\in \gL\backslash X$ where the injectivity radius is at least $r$.
\end{thm}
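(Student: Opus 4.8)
The plan is to argue by contraposition: assuming that the injectivity radius on $M=\gL\backslash X$ is bounded above by some $r>0$, I will show that $\gL$ must be a lattice. The first step is a reformulation. For $g\in G$ and $o=eK\in X$, the injectivity radius of $M$ at the point $\gL gK$ equals $\tfrac12\inf\{d_X(o,g^{-1}\gc g\,o):\gc\in\gL\setminus\{1\}\}$ (ignoring torsion, which causes no difficulty), so it is $\le r$ if and only if the conjugate $\gL^g:=g^{-1}\gL g$ contains a non-trivial element lying in the fixed compact set $\gO_r:=\{h\in G:d_X(o,ho)\le 2r\}$. Thus the hypothesis says precisely that \emph{every} conjugate of $\gL$ meets $\gO_r$ outside the identity. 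I also record that if $g$ ranges over a fixed compact subset of $G$ then the conjugates $\gL^g$ are uniformly discrete near $1$; this will be relevant to the non-degeneracy step below.

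Next I would build a stationary random subgroup out of $\gL$. Fix an admissible probability measure $\mu$ on $G$ (absolutely continuous, with support generating $G$) and let $G$ act on the compact metrisable space $\sub(G)$ of closed subgroups (Chabauty topology) by conjugation. The Ces\`{a}ro averages $\nu_n=\tfrac1n\sum_{k=0}^{n-1}\mu^{*k}*\delta_{\gL}$ have a weak-$*$ limit point $\nu$, which is a $\mu$-stationary probability measure on $\sub(G)$, supported on the closure of the conjugacy orbit of $\gL$. Since $\delta_\gL$ is supported on a discrete subgroup, the result quoted in the abstract (a stationary limit of measures supported on discrete subgroups is almost surely discrete) shows that $\nu$ is a \emph{discrete} stationary random subgroup. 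I now invoke the two structural results established in the paper: the stiffness theorem, which asserts that a discrete SRS in our higher-rank group is automatically invariant, and then the St\"{u}ck--Zimmer type classification of invariant random subgroups in $G$. The outcome is that, after decomposing $\nu$ into ergodic components, each component is either $\gd_{\{1\}}$ or of ``lattice type'', i.e.\ the push-forward of Haar measure on $G/N_G(\gC)$ for some lattice $\gC\le G$.

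It remains to exclude both possibilities. If some ergodic component of $\nu$ has lattice type, then the closure of the conjugacy orbit of $\gL$ in $\sub(G)$ contains a conjugate of a lattice $\gC$, so there are $g_n\in G$ with $\gL^{g_n}\to\gC$ in the Chabauty topology. Since higher-rank lattices are finitely generated by a bounded set of elements and satisfy the relevant rigidity, for $n$ large $\gL^{g_n}$ contains a conjugate of a finite-index subgroup of $\gC$; hence $\gL$ itself contains such a subgroup and therefore is a lattice, contradicting the infinite-covolume hypothesis. The remaining case $\nu=\gd_{\{1\}}$ is the heart of the matter and the step I expect to be the main obstacle: I must use the standing hypothesis — every conjugate of $\gL$ meets the compact set $\gO_r$ non-trivially — to see that the limiting random subgroup cannot be trivial. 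The danger is that along the random walk the short elements $\gc_g\in\gL^g\cap\gO_r$, although confined to the fixed compact set $\gO_r$, drift towards the identity, so that their contribution is lost in the Chabauty limit. Ruling this out is exactly where the discreteness of the stationary limit earns its keep: combined with a mass-transport/unimodularity argument for stationary random subgroups (equivalently, a quantitative non-escape estimate for the point processes $H\cap\gO_{r'}$ on $G$ attached to $\nu$), it forces the limiting point process to keep a non-trivial point in a slightly enlarged $\gO_{r'}$ with positive probability, so that $\nu\ne\gd_{\{1\}}$. With both cases excluded the contradiction is reached, proving the theorem.
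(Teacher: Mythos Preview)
Your overall strategy matches the paper's: form a stationary limit $\nu$ of the averages $\frac{1}{n}\sum_{i=0}^{n-1}\nu_G^{(i)}*\gd_\gL$ on $\sub(G)$, invoke the discreteness theorem so that $\nu(\sub_d(G))=1$, apply the stationary St\"uck--Zimmer classification so that every ergodic component is $\gd_{\{1\}}$ or $\mu_\gC$ for a lattice $\gC$, and use local rigidity of higher-rank lattices for the lattice case. There is a minor logical tangle: you announce the contrapositive (bounded injectivity radius $\Rightarrow$ lattice) but then write of ``excluding both possibilities'' and ``contradicting the infinite-covolume hypothesis''; in your framing the lattice-type outcome is precisely the desired conclusion, and only $\nu=\gd_{\{1\}}$ must be ruled out. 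The paper runs the implication the other way: assuming infinite covolume, local rigidity already forbids lattices in the conjugacy closure of $\gL$, so $\nu=\gd_{\{1\}}$ is forced, and one reads off $M_{\ge r}\ne\emptyset$ directly.

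The substantive issue is your handling of $\nu=\gd_{\{1\}}$. You correctly identify this as the crux, and you correctly see the danger: the short witnesses $\gc_g\in\gL^g\cap\gO_r$ could drift toward $1$, so weak-$*$ convergence alone does not yield a contradiction. You are also right that the discreteness theorem is what resolves this. But the mechanism is not mass-transport or unimodularity; it is the \emph{quantitative} content of the proof of the discreteness theorem. The Margulis-function inequality $\int u(\gL^g)\,d\nu_G(g)\le c\,u(\gL)+b$ gives, \emph{uniformly in $n$}, a bound $\nu_n(\{\gL':\mathcal{I}(\gL')<\gep\})\le\tau(\gep)$ with $\tau(\gep)\to 0$. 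Combined with uniform slimness (every conjugate meets $\gO_r\setminus\{1\}$) this yields $\nu_n(\{\gL':\gL'\cap(\gO_r\setminus U_\gep)\ne\emptyset\})\ge 1-\tau(\gep)$, where $U_\gep$ is a small identity neighbourhood. Now $\gO_r\setminus U_\gep$ is compact and misses $1$, so the displayed event is \emph{closed} in the Chabauty topology and the bound survives the weak-$*$ limit, contradicting $\nu=\gd_{\{1\}}$. This is exactly how the paper packages the conclusion (Theorem~\ref{thm:simple-RW} and Corollary~\ref{cor:RW-on-M}): the same estimate shows directly that the pushed-forward random walk $m_n$ on $M$ satisfies $m_n(M_{\ge r})\to 1$, hence $M_{\ge r}\ne\emptyset$, with no contrapositive needed.
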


Theorem \ref{thm:simple} can be regarded as a geometric interpretation of Margulis' normal subgroup theorem for lattices in $G$. In particular the analogous result is false if $G$ has rank one.

\begin{exam}
Let $G$ be a simple group of rank one and let $\gC\le G$ be a uniform lattice. Then $\gC$ is Gromov hyperbolic and hence admits a nontrivial normal subgroup $\gD$ of infinite index. The infinite volume locally symmetric space $M=\gD\backslash G/K$ has bounded injectivity radius. Indeed, let $\ga\in\gD\setminus\{1\}$, let $\gO$ be a compact fundamental domain for $\gC$ in $G/K$ and let $D=\max \{ d(x,\ga\cdot x): x\in \gO\}$. Then $\text{InjRad}_M(p)\le D/2$ at any point $p\in M$. To see this consider a lift $\tilde p\in G/K$ and let $\gc\in \gC$ be an element such that $\gc^{-1}\cdot \tilde p\in\gO$. Then the element $\gc\ga\gc^{-1}$ is in $\gD$ and has displacement at most $D$ at $\tilde p$.
\end{exam}

We prove Theorem \ref{thm:simple} by showing that for a certain bi-$K$-invariant probability measure $\mu=\mu_G$ on $G$ (see \S \ref{sec:random-discrete}) the random walk on $\gL\backslash X$ eventually spends most of the time in the $r$-thick part for every $r$.

\begin{thm}\label{thm:simple-RW} (Corollary \ref{cor:RW-on-M})
Let $M=\gL\backslash X$ be an $X$-orbifold of infinite volume. Let $x_0\in X$ be an arbitrary point, set 
$\nu_n =\frac{1}{n}\sum_{i=0}^{n-1}\mu^{(i)}*\gd_{x_0}$ and let $\overline{\nu_n}$ be the pushforward of $\nu_n$ to $M$ via the covering map.
For every $r>0$ and $\gep>0$ there is $N$ such that $\overline{\nu_n}(M_{\ge r})\ge 1-\gep$ for every $n\ge N$,
where $M_{\ge r}$ denotes the $r$-thick part of $M$ and $\mu^{(i)}$ the $i$'th convolution of $\mu$.
\end{thm}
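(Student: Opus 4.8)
\emph{Strategy.} The plan is to argue by contradiction, turning a persistent amount of mass in the thin part into a nontrivial, discrete, $G$-invariant random subgroup of $G$ living in the conjugation orbit closure of $\gL$, which is then incompatible with $\vol(\gL\backslash X)=\infty$. The first move is to transfer the walk to the Chabauty space $\sub(G)$ of closed subgroups of $G$. Realise the walk as $X_i=s_i\cdots s_1x_0$ with $s_1,s_2,\dots$ i.i.d.\ of law $\nu$; fixing a lift $g_0\in G$ of $x_0$ and setting $h_i:=s_i\cdots s_1g_0$ and $H_i:=h_i^{-1}\gL h_i$, left translation by $h_i^{-1}$ identifies the pointed orbifold $(M,\overline{X_i})$ with $(H_i\backslash X,\overline o)$, $o=eK$. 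Hence $H_i\backslash X$ is isometric to $M$ for every $i$, and $\mathrm{InjRad}_M(\overline{X_i})=\tfrac12\inf\{d_X(o,\gc o):\gc\in H_i\setminus\{1\}\}$. Since $H_{i+1}=s_{i+1}^{-1}H_is_{i+1}$, the sequence $(H_i)_{i\ge0}$ is the $\nu$-random walk on $\sub(G)$ for the conjugation action, started from $\gd_{g_0^{-1}\gL g_0}$. Writing $\eta_i$ for the law of $H_i$ and $\overline{\eta_n}:=\tfrac1n\sum_{i=0}^{n-1}\eta_i$, one has $\overline{\mu_n}(M_{<r})=\overline{\eta_n}(\mathcal W_r^{\circ})$, where $\mathcal W_r^{\circ}:=\{H\in\sub(G):\mathrm{InjRad}(H\backslash X,\overline o)<r\}$ is contained in the closed set $\mathcal W_r:=\{H:\mathrm{InjRad}(H\backslash X,\overline o)\le r\}$, and $\{1\}\notin\mathcal W_r$.

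Suppose the assertion fails: for some $r,\gep>0$ there are $n_k\to\infty$ with $\overline{\mu_{n_k}}(M_{<r})\ge\gep$, hence $\overline{\eta_{n_k}}(\mathcal W_r)\ge\gep$. Pass to a weak-$*$ limit $\eta$ of the $\overline{\eta_{n_k}}$ in the compact space of probability measures on $\sub(G)$. The telescoping identity $\nu*\overline{\eta_n}-\overline{\eta_n}=\tfrac1n(\eta_n-\eta_0)\to0$ shows $\eta$ is $\nu$-stationary, and the Portmanteau inequality applied to the closed set $\mathcal W_r$ gives $\eta(\mathcal W_r)\ge\gep>0$; in particular $\eta\ne\gd_{\{1\}}$. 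Moreover each $\eta_i$ is supported on conjugates of the discrete group $\gL$, hence on discrete subgroups, so by the theorem that a stationary limit of measures supported on discrete subgroups is almost surely discrete (established in the earlier sections) the measure $\eta$ is supported on discrete subgroups; and $\operatorname{supp}\eta$ lies in the conjugation orbit closure $\overline{G\cdot\gL}\subseteq\sub(G)$.

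Now the rigidity machinery applies. Since $G$ is a higher rank simple Lie group it has property (T), so by the stiffness theorem for discrete stationary random subgroups $\eta$ is $G$-invariant. Decomposing $\eta$ into ergodic components, each component is a discrete invariant random subgroup of $G$, and by the (stationary) St\"{u}ck--Zimmer theorem — equivalently, the Nevo--St\"{u}ck--Zimmer classification of discrete invariant random subgroups of a centre-free higher rank simple Lie group — it is either $\gd_{\{1\}}$ or the invariant random subgroup attached to a lattice. Since $\eta(\mathcal W_r)>0$ while $\gd_{\{1\}}(\mathcal W_r)=0$, a positive-$\eta$-measure set of components are lattice invariant random subgroups; fixing one of these exhibits a lattice $\gC\le G$ with $\gC\in\overline{G\cdot\gL}$.

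The main obstacle is the remaining step: deriving a contradiction from $\vol(\gL\backslash X)=\infty$, i.e.\ showing that the Chabauty orbit closure of an infinite-covolume discrete subgroup contains no lattice. Writing $\gC=\lim_ng_n^{-1}\gL g_n$, a lattice-point estimate for $\gC$ gives $\#\{\gc\in\gC:d_X(o,\gc o)\le R\}\ge c\,\vol_X(B_R(o))$ for all large $R$ with $c=c(\gC)>0$, and Chabauty convergence transfers a comparable lower bound (for each fixed $R$ and $n$ large) to $H_n=g_n^{-1}\gL g_n$; since $g_n$ acts on $X$ by isometries, this says that the orbit $\gL\cdot(g_no)$ meets the ball of radius $R$ about $g_no$ in at least $\tfrac c2\vol_X(B_R(o))$ points, at every scale $R$. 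I would then rule this out using $\vol(\gL\backslash X)=\infty$: if $\gL$ is not Zariski dense (or, in the non-uniform case, after reducing to the semisimple part) its Chabauty limits cannot be lattices for algebraic reasons, while if $\gL$ is Zariski dense one invokes that in higher rank an infinite-covolume Zariski-dense discrete subgroup has a growth/critical-exponent gap, contradicting the density of orbit points just produced. The genuinely delicate point — and where the higher rank hypothesis and the quantitative (Benjamini--Schramm type) estimates of the preceding sections are really needed — is that this has to hold \emph{uniformly} along the sequence $(g_n)$ produced by the walk, not merely for a generic base point. Granting this contradiction, Theorem~\ref{thm:simple-RW} follows, and Theorem~\ref{thm:simple} is immediate since $\overline{\mu_n}(M_{\ge r})>0$ forces $M_{\ge r}\ne\emptyset$.
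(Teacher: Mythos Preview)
Your reduction to the Chabauty space is correct and matches the paper's approach: you transfer the walk on $M$ to the conjugation walk on $\sub(G)$, pass to a stationary limit $\eta$, invoke Theorem~\ref{thm:dSRS} to get discreteness, apply the stiffness and (stationary) St\"{u}ck--Zimmer theorems to conclude that each ergodic component of $\eta$ is either $\gd_{\{1\}}$ or $\mu_\gC$ for a lattice $\gC$, and arrive at a lattice $\gC\in\overline{\gL^G}$. This is exactly the paper's argument (Theorem~\ref{thm:rw-rank-1}, Corollary~\ref{cor:RW-on-M}).

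The gap is your final step. The orbital growth / critical-exponent approach you sketch is not a proof: a clean ``critical exponent gap'' for infinite-covolume Zariski-dense discrete subgroups of higher rank simple groups is not an available black box, and the non-uniformity issue you flag is genuine --- Chabauty convergence gives, for each fixed $R$, a lattice-like count near $g_no$ only for $n\ge n(R)$, which yields no single basepoint with volume-like growth at all scales, so there is nothing for a putative exponent gap to contradict. The ``algebraic reasons'' for the non--Zariski-dense case are likewise not supplied.

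The paper closes this step with a one-line argument you are missing: \emph{local rigidity of higher rank lattices}. If $\gC=\lim_n g_n^{-1}\gL g_n$ with $\gC$ a lattice, fix a finite presentation $\langle\gS\mid R\rangle$ of $\gC$; for $n$ large the nearest-point map $\gS\to g_n^{-1}\gL g_n$ extends to a homomorphism $f_n:\gC\to g_n^{-1}\gL g_n$ arbitrarily close to the inclusion $\gC\hookrightarrow G$. By local rigidity $f_n(\gC)$ is a conjugate of $\gC$, hence a lattice contained in $g_n^{-1}\gL g_n$. Conjugating back, $\gL$ contains a lattice and is therefore itself a lattice, contradicting infinite covolume. (This is the content of Lemma~\ref{lem:Wang} and Corollary~\ref{cor:Wang}, specialised to simple $G$.) Replace your last paragraph with this and the proof is complete.
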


\begin{rem}
$(i)$ It is possible to deduce the celebrated normal subgroup theorem of Margulis by comparing Theorem \ref{thm:simple-RW} for $\gL\backslash G$ with the result of Eskin and Margulis \cite{EM} about random walks on $\gC\backslash G$ where $\gC\le G$ is a lattice and $\gL\lhd\gC$ is a normal subgroup.
However, our proof relies on the Stuck--Zimmer theorem (or rather a variant of this theorem for stationary measures, see Theorem \ref{thm:stationary-Stuck--Zimmer}) which relies on the intermediate factor theorem of Nevo and Zimmer \cite{NZ-IFT}. Thus, as in Margulis' original proof of the classical normal subgroup theorem, this approach also traces back to the factor theorem of Margulis \cite{Ma-FT}.


$(ii)$ It follows from Theorem \ref{thm:simple} that for higher rank manifolds, finite volume is equivalent to bounded injectivity radius. It might be interesting to obtain a quantitative version of that statement. 
It is also possible to deduce the result of the seven authors \cite[Theorem 1.5]{7s} from Theorem \ref{thm:simple-RW} by a straightforward compactness argument.
\end{rem}

\subsection{Confined subgroups of semisimple groups}
More generally, let $G$ be a connected centre-free semisimple Lie group. We shall say that a discrete subgroup $\gL\le G$ is {\it confined} if there is a compact subset $C\subset G$ such that $\gL^g\cap C\setminus\{1\}\ne\emptyset$ for every $g\in G$. In other words, $\gL$ is confined if and only if the locally symmetric space $\gL\backslash G/K$ has bounded injectivity radius. Theorem \ref{thm:simple} is a special case of the following result (see Theorem \ref{thm:u.s.-alternative} for a more general statement where rank one factors with Kazhdan's property (T) are allowed):

\begin{thm}\label{thm:ss-us}
Suppose that all the simple factors of $G$ are of real rank at least $2$. A discrete subgroup $\gL\le G$ is confined if and only if there is a nontrivial normal subgroup $H\lhd G$ such that $\gL\cap H$ is a lattice in $H$.
\end{thm}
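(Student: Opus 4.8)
The plan is to prove the two implications separately, with the interesting direction being that uniform slimness forces a normal subgroup in which $\gL$ is a lattice. For the easy direction, suppose $H\lhd G$ is nontrivial normal with $\gL\cap H$ a lattice in $H$; since $G$ is semisimple and centre-free, $H$ is a product of some of the simple factors, and $G=H\times H'$ for the complementary product $H'$. Then $\gL\cap H$ being a lattice in $H$ already guarantees that $(\gL\cap H)^g\cap C\setminus\{1\}\neq\emptyset$ for a fixed compact $C\subset H$ and all $g\in H$; conjugating by the $H'$-coordinate does nothing to $H$, so the same $C$ (viewed in $G$) works for all $g\in G$. Hence $\gL\supseteq\gL\cap H$ is uniformly slim.

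For the hard direction, the strategy is to feed uniform slimness into the stationary machinery developed earlier in the paper. First I would form, as in Theorem~\ref{thm:simple-RW}, the Cesàro averages $\mu_n=\frac1n\sum_{i<n}\nu^{(i)}*\gd_{x_0}$ on $G/K$, push them to measures on the space of discrete subgroups via $g\mapsto g\gL g^{-1}$ (using the bi-$K$-invariance of $\nu$), and extract a weak-$*$ limit point $\mu$, which is a $\nu$-stationary probability measure on $\sub(G)$. The key point is that uniform slimness—the existence of a compact $C$ with $\gL^g\cap C\neq\{1\}$ for all $g$—passes to the limit: every subgroup in the support of $\mu$ meets $C$ nontrivially, so $\mu$ is \emph{not} supported on $\{1\}$. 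Moreover, by the result that stationary limits of measures on discrete subgroups are almost surely discrete (stated in the abstract), $\mu$ is supported on discrete subgroups. Now invoke the stationary variant of the Stück--Zimmer theorem (Theorem~\ref{thm:stationary-Stuck--Zimmer}) together with the stiffness result for discrete stationary random subgroups: in the higher-rank situation these force $\mu$ to be supported on subgroups $\gL'$ each of which, after intersecting with the relevant normal factors, is a lattice in a nontrivial normal subgroup of $G$.

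The remaining work is to descend this conclusion about $\mu$-generic subgroups back to $\gL$ itself. Here I would argue that the "lattice in a normal factor" conclusion is essentially a closed/conjugation-invariant condition detected already at the level of $\gL$: if $g\gL g^{-1}$ has $g\gL g^{-1}\cap H$ a lattice in $H$ for $\mu$-almost every point, and $H$ is one of the finitely many normal subgroups of $G$, then a pigeonhole over the finitely many choices of $H$ together with the invariance properties of the construction pins down a single $H$ for which $\gL\cap H$ is already a lattice in $H$ (conjugation by $g$ inside the complementary factor $H'$ is an automorphism of $H$ preserving lattices, and the $G$-action moves the $x_0$-basepoint around a set of full $\mu$-measure).

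The main obstacle I anticipate is precisely this last descent step: the stationary Stück--Zimmer and stiffness theorems give information about the random subgroup $\mu$, which a priori is a genuine average and need not be a single conjugacy class, so one must rule out the possibility that $\mu$ "spreads" $\gL$ across several behaviours (e.g. a positive-measure set of conjugates that are lattices in $H_1$ and another that are lattices in $H_2$, or worse, that the lattice-in-a-factor property holds only $\mu$-generically and not for $\gL$). Handling this likely requires exploiting that the construction of $\mu$ from $\gL$ is equivariant enough that the support of $\mu$ is contained in the closure of the $G$-orbit (or $K$-orbit averages) of $\gL$, so that a non-trivial normal-intersection-lattice property in the support propagates to $\gL$ by an approximation/semicontinuity argument for the "is a lattice in $H$" condition on discrete subgroups of the fixed group $H$. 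A secondary technical point is verifying that the stationary measure $\mu$ is nondegenerate in the precise sense the Stück--Zimmer variant requires (e.g. that it is not supported on subgroups contained in proper parabolics), which should again follow from uniform slimness via the compact set $C$.
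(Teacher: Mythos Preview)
Your overall strategy matches the paper's: form a stationary limit $\mu$ of $\frac{1}{n}\sum_{i<n}\nu_G^{(i)}*\delta_\Lambda$, use Theorem~\ref{thm:dSRS} to get discreteness, note that uniform slimness passes to conjugate limits so $\mu\neq\delta_{\{1\}}$, apply the stationary St\"uck--Zimmer theorem to conclude that an ergodic component of $\mu$ equals $\mu_\Gamma$ for some lattice $\Gamma$ in a nontrivial semisimple factor $H\lhd G$, and then descend to $\Lambda$. Your secondary concern about nondegeneracy is misplaced: when every simple factor has rank $\ge 2$, the group $G$ automatically has property~(T) and has no rank-one factors, so the extra hypothesis of Theorem~\ref{thm:stationary-Stuck--Zimmer} is vacuous and no argument about parabolics is needed.

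The genuine gap is the descent step, and neither of your proposed mechanisms works. First, a $\mu$-generic subgroup is not of the form $g\Lambda g^{-1}$ but only a \emph{conjugate limit} of $\Lambda$ in the Chabauty topology, so the observation that $(g\Lambda g^{-1})\cap H$ is an $H$-conjugate of $\Lambda\cap H$ is simply not available; your pigeonhole over finitely many $H$ therefore proves nothing about $\Lambda$ itself. Second, ``is a lattice in $H$'' is \emph{not} a closed or semicontinuous condition on $\sub_d(G)$: lattices can be Chabauty-approximated by non-lattices and conversely, so an abstract approximation argument cannot succeed. The paper's mechanism (Lemma~\ref{lem:Wang} and Corollary~\ref{cor:Wang}) is \emph{local rigidity}: an irreducible lattice $\Gamma$ in a higher-rank factor $H$ is locally rigid as a subgroup of $G$. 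Hence if some discrete conjugate limit $\Delta$ of $\Lambda$ has $\Delta\cap H=\Gamma$, one fixes a finite presentation of $\Gamma$, chooses $g$ with $\Lambda^g$ Chabauty-close to $\Delta$, sends each generator to its nearest element in $\Lambda^g$, and obtains a homomorphism $\Gamma\to\Lambda^g\le G$ that is a small deformation of the inclusion $\Gamma\hookrightarrow G$; local rigidity then forces the image to lie in $H$ and be a lattice there, hence $\Lambda\cap H$ is a lattice. This is the missing idea, and it is exactly where the higher-rank assumption enters the descent.
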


We shall say that a subgroup $\gD\le\gC$ is a {\it conjugate limit} of $\gL$ if $\gD$ belongs to the closure of the conjugacy class $\overline{\gL^G}\subset\sub(G)$ in the Chabauty topology (see \S \ref{sec:random-discrete}). It is obvious that a conjugate limit of a confined group is also confined.

\subsection{Random walk on $G/\gL$}
Let $G$ be a connected semisimple Lie group.
Let $\gL\subset G$ be a discrete subgroup of $G$. 
Consider the sequence of probability measures 
\[
 \nu_n:=\frac{1}{n}\sum_{i=0}^{n-1}\int_G \delta_{\gL^g}d\mu^{(n)}(g).
\]
Every weak-* limit of the sequence $\nu_n$ is a $\mu$-stationary measure supported on the closure of the orbit of $\gL$ in $\sub(G)$. We would like to know if stationary random subgroups constructed in this way retain some properties of $\gL$. An essential result of this paper is that every stationary limit of $\nu_n$ is almost surely discrete (see also Theorem \ref{thm:dSRS} for a general statement). 
 
\begin{thm}\label{thm:intro-RW}
Let $\nu_\infty$ be a weak-* limit of $(\nu_n)$. Then $\nu_\infty(\sub_d(G))=1$ where $\sub_d(G)$ is the Chabauty open set of discrete subgroups of $G$.
\end{thm}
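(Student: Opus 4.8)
The plan is to argue by contradiction: suppose $\mu_\infty$ assigns positive mass to the set $\sub_{nd}(G)$ of non-discrete closed subgroups. Since $\sub_d(G)$ is open in the Chabauty topology, $\sub_{nd}(G)$ is closed, hence Borel, and the conditional measure of $\mu_\infty$ on $\sub_{nd}(G)$ is again $\nu$-stationary. For a closed subgroup $H\le G$ that is not discrete, its connected component of the identity $H^0$ is a nontrivial connected subgroup, and the identity component map $H\mapsto H^0$ is upper semicontinuous in a suitable sense; more robustly, one knows that $H\mapsto \Lie(H^0)$ lands in the (compact) space of Lie subalgebras of $\Lie(G)$, and this assignment is Borel and $G$-equivariant. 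Thus from the stationary random subgroup one extracts a $\nu$-stationary random \emph{Lie subalgebra} $\fg$ of $\Lie(G)$, i.e. a $\nu$-stationary measure on the Grassmannian-type variety $\{\ker\subsetneq \fh\le\Lie(G)\ :\ \fh\text{ a subalgebra}\}$, which is a compact $G$-space.

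The first key step is to push this stationary measure further: a $\nu$-stationary probability measure on a compact $G$-space gives, by the standard Furstenberg/Nevo--Zimmer machinery available for higher rank $G$ (and invoked elsewhere in this paper), a lot of rigidity. In the higher rank semisimple setting one expects that a stationary measure on such a variety of subalgebras is in fact supported on $G$-invariant subalgebras, i.e. on ideals of $\Lie(G)$; equivalently, the Zariski closure of the random subgroup normalizes a fixed nontrivial ideal. Concretely, I would run the argument through the projective-type factors: the action of $G$ on the subalgebra variety factors through actions on flag varieties of the simple factors, and stationarity plus higher rank (Furstenberg's description of stationary measures on $G/Q$, or the intermediate factor theorem) forces the measure to be supported on the fixed points, which correspond exactly to sums of simple factors. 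Hence $\mu_\infty$-almost every $H$ has $H^0$ equal to a nontrivial product of simple factors of $G$, so almost every $H$ \emph{contains} a nontrivial connected normal subgroup $N\lhd G$.

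The second key step is to derive a contradiction from the original construction of $\mu_\infty$ as a limit of the $\mu_n$. The measures $\mu_n$ are averages of point masses at \emph{discrete} subgroups $\gL^g$; discreteness is not a closed condition, so it is not automatically inherited, and that is precisely the obstacle --- but what \emph{is} controlled is the behaviour near the identity. I would use that for each $g$, the orbifold $\gL^g\backslash X$ has infinite volume if and only if $\gL\backslash X$ does, and more to the point track a continuous ``escape of mass from the thick part'' functional. If almost every limiting $H$ contains a connected normal subgroup $N$, then projecting to $N$ one gets a stationary random subgroup of $N$ containing $N$ itself, i.e. the random walk on $\gL\backslash G$ projected to $N\backslash G$ (or rather $(\gL\cap\text{its stabilizer})\backslash N$) collapses, which is impossible along a sequence of genuinely discrete, infinite-covolume subgroups: the displacement function $g\mapsto \min_{\gc\in\gL^g\cap N\setminus\{1\}} d(x_0,\gc x_0)$ (with the convention $=\infty$ if the set is empty) is positive and, crucially, one can find a set of $g$'s of uniformly positive $\nu^{(n)}$-measure on which it stays bounded below --- contradicting that $\mu_\infty$ is concentrated on subgroups that are non-discrete in the $N$-direction.

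The main obstacle I anticipate is exactly the transition in the last paragraph: passing from the \emph{qualitative} statement ``the limit contains a connected normal subgroup'' to a \emph{quantitative} contradiction with discreteness of the approximants. The clean way to handle it is probably not to argue directly with $\gL^g$ but to invoke the earlier structural results of the paper (the stiffness of discrete stationary random subgroups and the stationary St\"uck--Zimmer theorem, Theorem \ref{thm:stationary-Stuck--Zimmer}) in the following bootstrapping manner: first prove by a soft compactness/semicontinuity argument that the non-discrete part, if nonzero, is an \emph{invariant} random subgroup after disintegration (the Lie algebra is fixed a.s., so the stabilizer of $\fg$ has finite covolume and one reduces to an IRS), then apply St\"uck--Zimmer to conclude that this IRS is supported on lattices --- in particular on co-compact-in-$N$ subgroups of $G$ --- and finally contradict this with the infinite-covolume hypothesis that is preserved along the whole approximating sequence (every $\gL^g$, and hence the support of every $\mu_n$, consists of infinite-covolume subgroups, so no weak-$*$ limit can charge the set of finite-covolume subgroups). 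I would be prepared for the measurability of the maps $H\mapsto H^0$ and $H\mapsto\Lie(H^0)$ to require a little care, and for the Furstenberg-measure input to need the full strength of higher rank rather than a soft argument.
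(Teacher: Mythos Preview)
Your approach has a genuine gap, and it is also aimed at a harder target than necessary. The paper's proof of this theorem (given as the more general Theorem~\ref{thm:dSRS}) is short and completely elementary: it uses the Margulis--type function $u(\gC)=\mathcal{I}(\gC)^{-\gd}$ from \cite{GLM}, which satisfies the contraction inequality $\int u(\gC^g)\,d\nu_G(g)\le c\,u(\gC)+b$ with $0<c<1$. Iterating gives a uniform bound $\int u\,d\mu_n\le A+C$ independent of $n$, and Chebyshev then yields $\mu_n(\{\gL:\mathcal{I}(\gL)\le\gep\})<\gt(\gep)$ with $\gt(\gep)\to 0$; this passes to any weak-$*$ limit, so $\mu_\infty$ gives full mass to discrete subgroups. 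No structure theory, no higher rank, no Nevo--Zimmer, no St\"uck--Zimmer.

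Your route, by contrast, tries to bootstrap from the heavy machinery developed \emph{later} in the paper. This is circular: the stationary St\"uck--Zimmer theorem (Theorem~\ref{thm:stationary-Stuck--Zimmer}) rests on the decomposition Theorem~\ref{thm:stationary-decomposition}, which in turn uses Lemma~\ref{lem-StatioFin}, whose proof already invokes Theorem~\ref{thm:dSRS}. Independently of circularity, your argument has two concrete problems. First, Theorem~\ref{thm:intro-RW} carries no infinite-covolume hypothesis on $\gL$, so your final contradiction (``no weak-$*$ limit can charge finite-covolume subgroups'') is appealing to an assumption that is not there; and even when $\gL$ has infinite covolume, the set of finite-covolume subgroups is neither open nor closed in the Chabauty topology, so weak-$*$ limits can charge it. Second, your reduction via $H\mapsto\Lie(H^0)$ and Nevo--Zimmer, even if one grants all the measurability issues, at best tells you that a hypothetical non-discrete limit contains a fixed normal factor $N$; it does not by itself contradict anything, since conjugates of a discrete subgroup can perfectly well Chabauty-converge to $G$ (this already happens for lattices). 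The content of the theorem is precisely that such accumulation is invisible to the \emph{averaged} random walk, and the mechanism for that is the Lyapunov function, not rigidity.
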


\subsection{Stiffness and the Stuck--Zimmer theorem for stationary measures}
In view of Theorem \ref{thm:intro-RW} we are led to study stationary measures supported on the space $\sub_d(G)$ of discrete subgroups of $G$, that is, $\mu$-stationary measures $\nu$ with $\nu(\sub_d(G))=1$. Relying on the remarkable theorems of Nevo and Zimmer \cite{NZ,NZ99} we establish that every discrete stationary random subgroup of a higher rank group is (under a certain irreducibility assumption with respect to the rank one factors) an invariant random subgroup. 


\begin{thm}\label{thm:stiffness}
Let $G$ be a connected centre-free semisimple Lie group without compact factors and real rank at least two. Let $\nu$ be a $\mu$-stationary measure on $\sub_d(G)$. Suppose that $\nu$-almost every random subgroup intersects trivially every rank one factor of $G$.
Then $\nu$ is invariant. 
\end{thm}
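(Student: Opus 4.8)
The plan is to combine the Nevo--Zimmer structure theory for stationary measures with a rigidity argument that upgrades stationarity to invariance, using the discreteness hypothesis to rule out the intermediate-factor behaviour that would otherwise be allowed. First I would set up the standard framework: write $G = G_1\times\cdots\times G_k$ as a product of its simple factors, let $(B,\nu_B)$ be the Poisson boundary of $(G,\nu)$, and consider the $G$-space $Z=\sub_d(G)$ equipped with the stationary measure $\mu$. The measure $\mu$ gives rise to a $\nu$-proximal, $\nu$-stationary system, and by the Nevo--Zimmer theorems \cite{NZ,NZ99} — in the form valid for a semisimple group of real rank at least two all of whose factors are noncompact — the system $(Z,\mu)$ admits a canonical ``maximal projective factor'': there is a parabolic subgroup $Q\le G$ (a product of parabolics in the factors) such that the maximal $\nu$-boundary factor of $(Z,\mu)$ is $G/Q$, and the extension $Z\to G/Q$ has relatively invariant (i.e. conditionally measure-preserving) fibres. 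The key dichotomy is then: either $Q=G$, in which case the stationary system has no nontrivial projective factor and one concludes by the stiffness/``no projective factor implies invariant'' principle that $\mu$ is $G$-invariant; or $Q$ is a proper parabolic, and we must derive a contradiction from the discreteness assumption.

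The heart of the argument is this second case. Suppose the maximal projective factor is $G/Q$ with $Q\neq G$ proper. Then $\mu$ disintegrates over $G/Q$ as $\mu=\int_{G/Q}\mu_{gQ}\,d(gQ)$ where each $\mu_{gQ}$ is a probability measure on $\sub_d(G)$, the assignment is $G$-equivariant, and the fibre measures are invariant under (a conjugate of) $Q$ — more precisely the stabiliser $\operatorname{Stab}_{G/Q}$-action on the fibre is measure preserving. Pulling back by $g\mapsto gQ$ we obtain, for $\mu$-a.e.\ $\gL$, a point $gQ$ and a $Q$-invariant probability measure on a subset of $\sub_d(G)$ containing $\gL^{g^{-1}}$ (up to the usual bookkeeping). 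Now I would exploit that $Q$ contains the unipotent radical $U$ of a proper parabolic, hence $Q$ contains a nontrivial split unipotent one-parameter subgroup $\{u_t\}$ lying in some simple factor $G_i$. A $\{u_t\}$-invariant probability measure on $\sub_d(G)$, combined with recurrence of the unipotent flow (Poincar\'e recurrence for the measure-preserving $\{u_t\}$-action), forces the following: for a positive-measure set of discrete subgroups $\Delta$ in the fibre, the conjugates $u_t \Delta u_{-t}$ return infinitely often near $\Delta$ in the Chabauty topology. For a \emph{discrete} subgroup this is a strong constraint — it is exactly the mechanism by which one shows that a discrete subgroup invariant (or stationarily supported) along a full unipotent must intersect the corresponding factor $G_i$ in a lattice, or else the unipotent must actually normalise the relevant part of $\Delta$ forcing unipotent elements into $\Delta$ and again a lattice-type conclusion in $G_i$. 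Either way the fibre measure is supported on subgroups $\Delta$ with $\Delta\cap G_i \neq \{1\}$ and in fact $\Delta\cap G_i$ containing unipotents, contradicting the hypothesis that $\mu$-a.e.\ random subgroup intersects every rank one factor trivially — wait, more carefully, the hypothesis is about rank one factors only, so here since all factors have rank $\ge 2$ this particular hypothesis is vacuous, and instead the contradiction must come purely from discreteness: a $\{u_t\}$-invariant measure on $\sub_d(G)$ that is genuinely carried by the fibre over a \emph{non-}$G$ parabolic cannot be reconciled with the stationarity of $\mu$ being maximal, because the Nevo--Zimmer intermediate factor theorem would then produce a further projective factor, contradicting maximality of $G/Q$ unless the fibre action is trivial, i.e.\ $\mu_{gQ}$ is a point mass. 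A point-mass fibre over $G/Q$ means $\mu$ is the pushforward of the $G$-quasi-invariant measure on $G/Q$ under a measurable $G$-map $G/Q\to\sub_d(G)$, i.e.\ $\mu$ is supported on a single $G$-orbit of a subgroup $\Delta_0$ with $Q\le N_G(\Delta_0)$; but then $\Delta_0$ is normalised by a proper parabolic, hence by a nontrivial unipotent subgroup of some $G_i$, and by discreteness $\Delta_0\cap G_i$ must be a lattice in $G_i$ (a discrete group normalised by a full horospherical subgroup, by the Borel density / unipotent-dynamics argument, either is trivial in that factor or is a lattice there). In the former sub-case the projective factor really is trivial in the $G_i$ direction contradicting its appearance in $Q$; in the latter, $\Delta_0$ contains a lattice of $G_i$ and one checks the orbit-closure in $\sub_d(G)$ carries a $G$-invariant measure, so again $\mu$ is invariant.

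The main obstacle I anticipate is making the second-case analysis rigorous: ensuring that the Nevo--Zimmer structure theory applies to the (possibly badly behaved, merely standard Borel) $G$-space $\sub_d(G)$ with its stationary measure, and carefully extracting from a $U$-invariant (for $U$ the unipotent radical of a proper parabolic) measure on discrete subgroups the conclusion that the support consists of subgroups containing lattices in the corresponding factors. This is precisely where discreteness is indispensable — for an arbitrary closed-subgroup-valued stationary system the alternative can genuinely occur — and the quantitative input is a Chabauty-topology recurrence estimate for unipotent flows on $\sub_d(G)$, in the spirit of the non-divergence estimates but adapted to the present setting. I would isolate that recurrence/rigidity statement as a separate lemma, prove it via a Margulis-type non-escape-of-mass argument combined with the fact that a discrete subgroup with no nontrivial intersection with $G_i$ cannot be $\{u_t\}$-recurrent for $u_t\subset G_i$ unless it is trivial in that factor, and then feed it into the dichotomy above. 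The remaining steps (identifying the maximal projective factor, the ``no projective factor $\Rightarrow$ invariant'' implication, and the Borel-density argument for discrete groups normalised by horospherical subgroups) are by now standard and can be cited or sketched briefly.
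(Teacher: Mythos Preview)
Your high-level framework --- apply Nevo--Zimmer, case on whether there is a nontrivial projective factor $G/Q$, and in the $Q\ne G$ case analyse the $Q$-invariant fibre measures --- matches the paper's. But the execution of the $Q\ne G$ case has a real gap, and you have also suppressed a third alternative in the Nevo--Zimmer trichotomy which is exactly where the rank-one hypothesis is used.

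First, the Nevo--Zimmer theorem you invoke (Theorem~\ref{thm-NZ}) does \emph{not} say ``invariant or has a $G/Q$ factor''. There is a third option: the system factors through a rank-one simple factor of $G$. You cannot just absorb this into your dichotomy; one needs a separate argument (the paper's Theorem~\ref{prop:rank-one}) showing that a non-free stationary system of a rank-one group either has discrete Zariski-dense stabilisers or again has a $G/P$ factor. That is where the hypothesis ``$\mu$-a.e.\ subgroup intersects each rank-one factor trivially'' actually bites: it rules out the stabilisers being Zariski dense in that factor, forcing the $G/P$ alternative and putting you back in the parabolic case. Your aside ``since all factors have rank $\ge 2$ this hypothesis is vacuous'' is simply a misreading of the statement.

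Second, and more seriously, your mechanism for the $Q\ne G$ case is wrong. You try to use Poincar\'e recurrence for a unipotent one-parameter subgroup $\{u_t\}\subset Q$ acting on the $Q$-invariant fibre measure and claim this forces $\Delta\cap G_i$ to contain unipotents or a lattice. Recurrence of $u_t\Delta u_{-t}$ near $\Delta$ in the Chabauty topology gives no such thing for a general discrete $\Delta$; there is no ``unipotent recurrence $\Rightarrow$ lattice'' lemma of this form on $\sub_d(G)$. The paper's argument uses the opposite piece of $Q$: the \emph{central split torus} $A_L$ of a Levi $L\le Q$. One picks $a\in A_L$ contracting the unipotent radical $N$; then for an $A_L$-invariant probability measure on discrete subgroups of $Q$, any positive mass on groups meeting $LN\setminus L$ would be strictly decreased by conjugation by $a^{n}$, contradicting invariance (Lemma~\ref{lem-AIRS}). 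Hence the fibre measure is supported on $\sub_d(L)$. Using $P$-invariance one intersects over all Levi subgroups of $Q$, and a purely algebraic computation (Lemma~\ref{lem-LeviInt}, Lemma~\ref{lem-PIRS}) shows this intersection is a proper semisimple factor $H\lhd G$. So the fibre measure --- hence $\mu$ itself --- is supported on $\sub_d(H)$, and one finishes by induction on the number of simple factors. No recurrence, no non-divergence, no Borel-density-for-horospherical-normalisers is needed; the contraction argument plus Levi-intersection is both simpler and actually correct.

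In short: keep your Nevo--Zimmer opening, but (i) restore the rank-one alternative and handle it via the rank-one factor theorem, and (ii) replace the unipotent-recurrence sketch by the $A_L$-contraction argument forcing the fibre measure into a Levi, followed by the intersection-of-Levis computation. That is the paper's route, packaged as the decomposition Theorem~\ref{thm:stationary-decomposition}, from which Theorem~\ref{thm:stiffness} follows in one line.
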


Theorem \ref{thm:stiffness} is a consequence of a decomposition theorem for stationary measures (Theorem \ref{thm:stationary-decomposition}). Observe that in view of Theorem \ref{thm:stationary-decomposition}, the condition that the intersection with every rank one factor is trivial implies that the projection to every such factor is either non-discrete or trivial. The analog of Theorem \ref{thm:stiffness} does not hold for rank one groups (see Example \ref{exam:rank-1}), yet we prove a weak variant of the Nevo--Zimmer factor theorem for rank one groups (see Theorem \ref{prop:rank-one}).

\begin{rem}
Theorem \ref{thm:stiffness} specialized to irreducible lattices in $G$ can also be deduced from the 
recent results \cite{BH,Creutz}.
\end{rem}

For semisimple Lie groups with Kazhdan's property (T) we deduce the following generalization of the Stuck--Zimmer theorem for discrete stationary random subgroups:

\begin{thm}\label{thm:stationary-Stuck--Zimmer}
Let $G$ be a connected centre-free semisimple Lie group without compact factors. Suppose that $G$ has real rank at least $2$ and Kazhdan's property (T). Let $\nu$ be an ergodic $\mu$-stationary measure on $\sub_d(G)$. Suppose that $\nu$-almost every random subgroup intersects trivially every rank one factor of $G$. Then there is a semisimple factor $H\lhd G$ and a lattice $\gC\le H$ such that $\nu=\nu_\gC$. 
\end{thm}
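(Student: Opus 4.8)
The plan is to combine Theorem~\ref{thm:stiffness} with the classical St\"uck--Zimmer rigidity theorem. First I would invoke Theorem~\ref{thm:stiffness}: since $G$ has real rank at least two and $\mu$-almost every subgroup intersects every rank one factor trivially, the stationary measure $\mu$ is in fact $G$-invariant. Thus $\mu$ is an ergodic invariant random subgroup of $G$ supported on $\sub_d(G)$. At this point the hypothesis that $G$ has Kazhdan's property (T) enters: I would apply the St\"uck--Zimmer theorem (in the form valid for semisimple groups with property (T) all of whose simple factors have rank at least two, or more precisely the extension covering the case where some factors are rank one but the ambient group has (T)), which asserts that an ergodic IRS of $G$ is, up to conjugation and up to passing to a normal semisimple factor, either central (hence trivial, as $G$ is centre-free) or the IRS $\mu_\gC$ associated to a lattice $\gC$ in that factor.

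The key steps, in order, are: (1) reduce to the invariant case via Theorem~\ref{thm:stiffness}; (2) record that ergodicity of $\mu$ as a stationary measure passes to ergodicity as an invariant measure once we know it is invariant --- here one should be slightly careful, since a priori $\nu$-ergodicity and $G$-ergodicity differ, but by a theorem of Nevo--Zimmer/Furstenberg-type arguments (or directly, since the $G$-action on $(\sub_d(G),\mu)$ is generated together with the $\nu$-random walk) a $\nu$-ergodic invariant measure for an admissible $\nu$ whose support generates $G$ is $G$-ergodic; (3) apply St\"uck--Zimmer to the ergodic IRS $\mu$, concluding that there is a normal subgroup, which we may take to be a product of simple factors, i.e.\ a semisimple factor $H\lhd G$, such that the projection of $\mu$ to $H$ is the IRS of a lattice $\gC\le H$ while the projection to the complementary factor $H'$ is trivial; (4) combine with the discreteness hypothesis and centre-freeness to rule out the exceptional (central/amenable) alternatives, so that $\gC$ is genuinely a lattice and $\mu=\mu_\gC$.

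I expect the main obstacle to be step~(2)--(3): matching the precise hypotheses of the available St\"uck--Zimmer statement with what we have in hand. The classical St\"uck--Zimmer theorem is usually stated for property (T) semisimple groups under an irreducibility or no-rank-one-factor condition; here rank one factors are permitted provided the whole group has property (T), and $\mu$ only has a partial irreducibility property (trivial intersection with rank one factors, rather than with all proper normal subgroups). So one must run the St\"uck--Zimmer dichotomy factor-by-factor: for each simple factor $G_i$ of $G$, the projection $\mu_i$ of $\mu$ is an ergodic stationary (now invariant) random subgroup, and by St\"uck--Zimmer applied to $G_i$ together with property (T) of $G$, either $\mu_i$ is trivial or $\mu$ restricted over $G_i$ is a lattice; one then assembles the factors where the lattice alternative occurs into $H$ and checks, using ergodicity, that $\gC=\mu\cap H$ is a lattice in $H$ with $\mu=\mu_\gC$.

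Finally, I would note that the discreteness assumption $\mu(\sub_d(G))=1$ is used both to make sense of "lattice" in the conclusion and to exclude the possibility that the IRS concentrates on non-discrete subgroups such as parabolics or $G$ itself in the St\"uck--Zimmer dichotomy; combined with $G$ being centre-free this forces the trivial-or-lattice alternative in its sharp form. The result then follows.
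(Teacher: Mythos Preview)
Your overall strategy---reduce to an IRS via Theorem~\ref{thm:stiffness} and then invoke St\"uck--Zimmer---is exactly the paper's approach, and your worry about step~(2) is unfounded: once $\mu$ is $G$-invariant, any $G$-invariant measurable set yields a decomposition of $\mu$ into $\nu$-stationary pieces, so $\nu$-ergodicity immediately gives $G$-ergodicity.

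The genuine imprecision is in your step~(3). Projecting to individual \emph{simple} factors $G_i$ and applying St\"uck--Zimmer to each does not work: the projection of $\mu$ to a simple factor need not be discrete, need not be $G_i$-ergodic, and St\"uck--Zimmer says nothing about a single rank-one factor. The paper instead goes through the stationary decomposition Theorem~\ref{thm:stationary-decomposition}: the hypothesis on rank-one intersections forces $G_\mathcal{H}$ to be trivial, so $G=G_\mathcal{I}\times G_\mathcal{T}$ and $\mu$ projects to an IRS on $G_\mathcal{I}$ whose \emph{irreducible} factors (in the sense of Theorem~\ref{thm:decomposition}) all have rank at least~$2$. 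One then establishes proper ergodicity of this IRS with respect to every simple factor of $G_\mathcal{I}$ (Corollary~\ref{cor:properly-ergodic}, proved via Lemma~\ref{lem:G1dense->G2-Invartant}); this is precisely the extra ergodicity hypothesis needed in the version of St\"uck--Zimmer quoted in the paper. The whole package is Proposition~\ref{prop:IRS-lattices}. So the fix to your outline is: replace the simple-factor-by-simple-factor argument with the IRS decomposition into irreducible pieces, verify proper ergodicity, and only then apply St\"uck--Zimmer.
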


Here $\nu_\gC$ denotes the invariant random subgroup obtained by the pushforward of the probability measure from $H/\gC$ to $\sub(H)\subset\sub(G)$ via the map $h\gC\mapsto h\gC h^{-1}$.

\subsection{The conclusion}

Combining Theorem \ref{thm:intro-RW}, Theorem \ref{thm:stationary-Stuck--Zimmer} and local rigidity we deduce the following:


\begin{thm}
Let $G$ be a connected centre-free semisimple Lie group without compact factors. Suppose that $G$ has real rank at least $2$ and Kazhdan's property (T).
Let $\gL\le G$ be a discrete subgroup. Suppose that for every nontrivial semisimple factor $H\lhd G$ the intersection $\gL\cap H$ is not a lattice in $H$. Suppose also that no discrete conjugate limit of $\gL$ intersects a rank one factor of $G$ in a Zariski dense subgroup. Then $\frac{1}{n}\sum_{i=0}^{n-1}\mu_G^{(i)}*\gd_\gL$ weakly converges to $\gd_{\{1\}}$.
\end{thm}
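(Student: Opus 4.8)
The plan is to carry out the scheme announced above --- combine Theorem~\ref{thm:intro-RW}, Theorem~\ref{thm:stationary-Stuck--Zimmer} and local rigidity --- as follows. Since $\sub(G)$ is compact and metrizable in the Chabauty topology, the Ces\`aro averages $\mu_n=\frac1n\sum_{i=0}^{n-1}\nu_G^{(i)}*\gd_\gL$ form a precompact family in the compact metrizable space $\mathrm{Prob}(\sub(G))$, so it suffices to show that \emph{every} weak-$*$ limit point $\mu_\infty$ of $(\mu_n)$ equals $\gd_{\{1\}}$. Fix such a $\mu_\infty$. A routine estimate ($\|\nu_G*\mu_n-\mu_n\|\le 2/n$) shows $\mu_\infty$ is $\nu_G$-stationary, and since each $\mu_n$ is carried by the conjugacy class $\gL^G$, $\mu_\infty$ is supported on $\overline{\gL^G}$. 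By Theorem~\ref{thm:intro-RW}, $\mu_\infty(\sub_d(G))=1$. Decompose $\mu_\infty=\int\mu_\omega\,d\beta(\omega)$ into ergodic $\nu_G$-stationary components; for $\beta$-a.e.\ $\omega$, $\mu_\omega$ is an ergodic $\nu_G$-stationary measure supported on $\sub_d(G)\cap\overline{\gL^G}$, and it is enough to prove $\mu_\omega=\gd_{\{1\}}$ for $\beta$-a.e.\ $\omega$.

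Next I would verify the hypothesis of Theorem~\ref{thm:stationary-Stuck--Zimmer} for $\mu_\omega$. For $\mu_\omega$-a.e.\ $\gD$ the group $\gD$ is a discrete conjugate limit of $\gL$, so by assumption $\gD\cap G_i$ is not Zariski dense in $G_i$, for every rank one factor $G_i$ of $G$. Since each $G_i$ is a normal factor, $\gD\mapsto\gD\cap G_i$ pushes $\mu_\omega$ forward to a discrete stationary random subgroup of $G_i$ all of whose members are non-Zariski-dense; the rank one part of the theory --- the decomposition Theorem~\ref{thm:stationary-decomposition} underlying Theorem~\ref{thm:stiffness}, together with the weak rank one factor Theorem~\ref{prop:rank-one} --- forces this pushforward to be $\gd_{\{1\}}$, that is, $\gD\cap G_i=\{1\}$ for $\mu_\omega$-a.e.\ $\gD$ and every rank one factor $G_i$. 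With this established, Theorem~\ref{thm:stationary-Stuck--Zimmer} applies: $G$ has real rank $\ge 2$ and property (T), $\mu_\omega$ is ergodic and supported on $\sub_d(G)$, and $\mu_\omega$-a.e.\ member meets every rank one factor trivially; hence there are a semisimple factor $H\lhd G$ and a lattice $\gC\le H$ with $\mu_\omega=\mu_\gC$.

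It remains to exclude $H\ne\{1\}$. Suppose $H\ne\{1\}$. Then $\mathrm{supp}(\mu_\gC)\subseteq\mathrm{supp}(\mu_\infty)\subseteq\overline{\gL^G}$ contains a lattice of $H$, so after replacing $\gC$ by a conjugate we may pick $g_n\in G$ with $g_n\gL g_n^{-1}\to\gC$ in $\sub(G)$. Because $\gC$ is finitely presented and discrete in $G$ and the subgroups $g_n\gL g_n^{-1}$ are uniformly discrete near $1$ along this convergence, a fixed finite generating set of $\gC$ is approximated by elements of $g_n\gL g_n^{-1}$ which, for $n$ large, satisfy the defining relations of $\gC$ exactly; this yields homomorphisms $\varphi_n\colon\gC\to g_n\gL g_n^{-1}\le G$ converging to the inclusion $\iota\colon\gC\hookrightarrow H\hookrightarrow G$. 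As $H$ is a factor of $G$ it has no compact factors and has property (T), so $\iota$ is locally rigid by Weil's theorem, and for large $n$, $\varphi_n$ equals $\iota$ conjugated by some $x_n\to 1$. Then $x_n\gC x_n^{-1}\le g_n\gL g_n^{-1}$; since $H$ is normal, $x_n\gC x_n^{-1}$ is a lattice in $H$, hence so is $g_n^{-1}x_n\gC x_n^{-1}g_n\le\gL\cap H$, and therefore $\gL\cap H$ is a lattice in $H$ --- contradicting the hypothesis. So $H=\{1\}$, $\mu_\omega=\mu_\gC=\gd_{\{1\}}$ for $\beta$-a.e.\ $\omega$, hence $\mu_\infty=\gd_{\{1\}}$; since every weak-$*$ limit point of $(\mu_n)$ is $\gd_{\{1\}}$, the whole sequence converges to $\gd_{\{1\}}$.

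The step I expect to be the main obstacle is the promotion, in the second paragraph, of ``not Zariski dense in a rank one factor'' to ``trivial intersection with every rank one factor'' for the stationary limit: unlike the higher rank factors, a rank one factor can a priori carry a discrete stationary random subgroup whose members are non-trivial but non-Zariski-dense (cyclic ones, or lattices in proper rank one subgroups), and ruling this out under our hypothesis is precisely the content of the rank one results invoked there --- the discreteness of stationary limits, the weak rank one factor theorem, and the fact that along the walk the conjugates of such a subgroup degenerate in $\sub(G_i)$ to the trivial group. Everything else is bookkeeping: assembling Theorem~\ref{thm:intro-RW} and Theorem~\ref{thm:stationary-Stuck--Zimmer} with classical Weil rigidity.
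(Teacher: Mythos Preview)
Your proposal is correct and follows the paper's scheme exactly: Theorem~\ref{thm:intro-RW} for discreteness of the limit, Theorem~\ref{thm:stationary-Stuck--Zimmer} to force each ergodic piece to be $\mu_\gC$ for a lattice $\gC$ in a factor $H$, and local rigidity (the paper's Lemma~\ref{lem:Wang} and Corollary~\ref{cor:Wang}) to transfer ``$\gC$ is a lattice in $H$'' back to $\gL\cap H$.

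Two small sharpenings. First, the step you flag as the main obstacle is precisely the content of the Remark following the proof of Theorem~\ref{thm:stiffness}: in Theorem~\ref{thm:stationary-Stuck--Zimmer} the hypothesis ``trivial intersection with every rank one factor'' may be weakened to ``not Zariski dense in any rank one factor''. The clean way to see this is not via the pushforward to each $G_i$ (which need not be ergodic and for which the rank one case of the decomposition theorem is not independently established), but by applying Theorem~\ref{thm:stationary-decomposition} to $\mu_\omega$ on $G$: the ``furthermore'' clause says the intersection with every simple factor of $G_{\mathcal H}$ is Zariski dense, so your hypothesis forces $G_{\mathcal H}=\{1\}$ and hence $\mu_\omega$ is invariant. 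Second, ``Weil's theorem'' is not quite the right citation for local rigidity of $\gC\le H$ inside the ambient $G$ (and $\gC$ need not be uniform); the argument you need is the paper's Lemma~\ref{lem:Wang}, which uses property~(T) of $H$ (inherited from $G$) to confine small deformations to $H$ and then invokes local rigidity in $H$.
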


See \S \ref{sec:US} below for more general results of this nature. In particular Theorem \ref{thm:g.s.s.-R-W} deals with a general discrete subgroup $\gL$ of a general semisimple group $G$, not even assuming property (T). Loosely speaking, if $\gL$ does not contain a lattice of a higher rank semisimple factor then every stationary limit is supported on discrete subgroups of the product of rank one factors of $G$.

\begin{rem}
The main results of this work holds also for analytic groups over non-archimedean local fields. The same proofs can be carried out in that generality with minor adaptations. However since the Nevo--Zimmer factor theorem \cite{NZ,NZ99} is written for real Lie groups, we decided to restrict to that case as well. We remark that the proof of the Nevo--Zimmer theorem also applies with minor changes to the non-archimedean setup. 
\end{rem}


{\bf Acknowledgment.}
We thank Uri Bader for sharing with us some insights concerning stationary measures and Poisson boundaries. Our work was supported by the National Science Foundation under Grant No.\ DMS-1928930 while the authors participated in a program hosted by the Mathematical Sciences Research Institute in Berkeley, California, during the Fall 2020 semester. The second author was partially supported by the Israel Science Foundation grant No.\ 2919/19.


%
%
%
%
%
%
%
%


\section{Random walks on the space of discrete subgroups}\label{sec:random-discrete}

Let $G$ be a connected centre-free semisimple Lie group without compact factors and let $K$ be a maximal compact subgroup. 

\subsection{The measure associated to $G$}\label{sec:nu_G}
We let $\mu_G$ be the probability measure 
$$
 \mu_G=\eta_K*\gd_s*\eta_K
$$ 
defined in \cite[\S 8]{GLM}. Here $\eta_K$ denotes the normalized Haar measure on $K$ and $s\in G$ is a certain regular semisimple element with sufficiently good expanding properties when acting on the unipotent radical of a fixed minimal parabolic subgroup (see \cite[\S 6]{GLM}). We will not make any use below of the explicit properties of the element $s$. All that we need to know is that $\mu_G$ is bi-$K$-invariant and that Theorem \ref{thm:eq-Marg} holds.
We will refer to $\mu_G$ as the probability measure associated to $G$. Note that if $G$ is semisimple, $G=G_1\times\ldots\times G_n$, we have $\mu_G=\mu_{G_1}*\cdots*\mu_{G_n}$
and the measures $\mu_{G_i}$ pairwise commute.

\subsection {The discreteness radius}

Fix a norm $\|\cdot\|$ on the Lie algebra $\text{Lie}(G)$ such that $\text{exp}:\text{Lie}(G)\to G$ restricted to the unit ball $B(1)=\{X\in\text{Lie}(G):\| X\|<1\}$ is a well defined diffeomorphism. For $r\le 1$ denote $B(r)=\{X\in\text{Lie}(G):\| X\|<r\}$.
For a discrete group $\gL\subset G$ set
$$
 \mathcal{I}(\gL)=\sup\{r\le 1: \exp B(r)\cap \Gamma=\{1\}\}.
$$ 
We call $\mathcal{I}(\Gamma)$ the discreteness radius of $\gL$.


\subsection{The Margulis function on the space of discrete subgroups of $G$}

We denote by $\sub(G)$ the space of closed subgroups of $G$ equipped with the Chabauty topology and by $\sub_d(G)$ the subset of discrete subgroups of $G$. Since $G$ has no small subgroups, $\sub_d(G)$ is open (see \cite[Lemma 1.1]{KM-IRS}).
We will say that a measure $\nu$ on $\sub(G)$ is supported on the set of discrete subgroups if $\nu(\sub_d(G))=1$. 
A stationary measure supported on $\sub_d(G)$ will be called a {\it discrete stationary random subgroup}.

An essential result established in \cite{GLM} is that there is a positive constant $\gd=\gd(G)$ such that 
$$
 u(\gC):=\mathcal{I}(\gC)^{-\gd}
$$
satisfies Inequality (\ref{eq-Marg}) below, that is, it is a Margulis function on $\sub_d(G)$ with respect to $\mu_G$. 

\begin{thm}[\cite{GLM}, Theorem 1.5]\label{thm:eq-Marg}
There exist $0<c<1, b\geq 0$ such that, for every discrete subgroup $\gC\le G$,
\begin{equation}\label{eq-Marg}
 \int_G u(\gC^g) d\mu_G(g)\leq c u(\gC) +b. 
\end{equation} 
\end{thm}

We remark that the constants $\gd, c$ and $b$ are constructed explicitly in \cite{GLM} in order to prove certain effective results and in particular a quantitative version of the Kazhdan--Margulis theorem.  

\subsection{Stationary limit are discrete}

The main result of this section is that any stationary limit of a measure supported on discrete subgroups of $G$ is almost surely discrete. 
This is a key ingredient in the proofs of Theorem \ref{thm:simple} and Theorem \ref{thm:simple-RW}, as well as the results of \S \ref{sec:US}. 

\begin{thm}\label{thm:dSRS}
Let $\nu$ be a probability measure on $\sub_d(G)$. Let 
$$
 \nu_n=\frac{1}{n}\sum_{i=0}^{n-1}\mu_G^{(n)}*\nu
$$ 
and let $\nu_\infty$ be a weak-* limit of $\nu_n$. Then $\nu_\infty$ is supported on the set of discrete subgroups of $G$, that is, $\nu_\infty$ is a discrete stationary random subgroup. 

\end{thm}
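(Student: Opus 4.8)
The plan is to use the Margulis function inequality~\eqref{eq-Marg} to control the discreteness radius along the averaged random walk, and then deduce discreteness of the limit by a portmanteau-type argument on the open set $\sub_d(G)$. Write $u(\gC)=\mathcal I(\gC)^{-\gd}$ as above; note that $u$ is a \emph{proper} function on $\sub_d(G)$ in the sense that $u(\gC)\to\infty$ precisely as $\gC$ approaches the boundary $\sub(G)\setminus\sub_d(G)$ (a group is non-discrete iff it contains elements arbitrarily close to $1$, iff $\mathcal I=0$). So if I can bound $\int u\,d\mu_n$ uniformly in $n$, the family $(\mu_n)$ is, roughly speaking, uniformly tight away from the non-discrete locus, and any weak-$*$ limit must give full mass to $\sub_d(G)$.

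First I would iterate~\eqref{eq-Marg}. Since the inequality reads $\int_G u(\gC^g)\,d\nu_G(g)\le c\,u(\gC)+b$ with $0<c<1$, applying it $i$ times gives
\[
 \int_G u(\gC^g)\,d\nu_G^{(i)}(g)\le c^i u(\gC)+\frac{b}{1-c}.
\]
Integrating against the initial measure $\mu$ on $\sub_d(G)$ and averaging over $i=0,\dots,n-1$, one gets
\[
 \int_{\sub(G)} u\,d\mu_n=\frac1n\sum_{i=0}^{n-1}\int \Big(\int_G u(\gC^g)\,d\nu_G^{(i)}(g)\Big)d\mu(\gC)
 \le \frac1n\cdot\frac{1}{1-c}\int u\,d\mu+\frac{b}{1-c}.
\]
There is a subtlety here: $\int u\,d\mu$ need not be finite for an arbitrary $\mu$ on $\sub_d(G)$, so the first term is not obviously small. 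I would handle this by first reducing to the case $\int u\,d\mu<\infty$ — e.g. approximate $\mu$ by $\mu$ conditioned on $\{u\le R\}$, run the argument there, and let $R\to\infty$ — or, more cleanly, observe that for $n$ large the transient first term is dominated by the recurrent term $\tfrac{b}{1-c}$ once we start the chain, so that $\limsup_n\int u\,d\mu_n\le \tfrac{b}{1-c}=:L<\infty$, where I only need the $\limsup$ bound and can absorb the $\tfrac1n\int u\,d\mu$ term into it (when $\int u\,d\mu=\infty$ one truncates as above and passes to the limit).

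Then I would conclude as follows. Fix a weak-$*$ limit $\mu_\infty=\lim_k\mu_{n_k}$. For $R>0$ let $U_R=\{\gC\in\sub_d(G): u(\gC)<R\}$; this is an \emph{open} subset of $\sub(G)$ (since $\sub_d(G)$ is open and $u$ is lower semicontinuous, hence $\{u<R\}$ is open — or upper semicontinuous, in which case $\{u\le R\}$ is closed and one argues with complements; either way one gets the needed bound). The complement $\sub(G)\setminus U_R$ is closed, and by Markov's inequality $\mu_{n_k}(\sub(G)\setminus U_R)\le L'/R$ for all large $k$ (with $L'$ any constant exceeding $L$). By the portmanteau theorem $\mu_\infty(\sub(G)\setminus U_R)\le L'/R$, hence $\mu_\infty(U_R)\ge 1-L'/R$. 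Letting $R\to\infty$ and noting $\bigcup_R U_R=\sub_d(G)$ gives $\mu_\infty(\sub_d(G))=1$. Finally, $\mu_\infty$ is $\nu_G$-stationary since it is a weak-$*$ limit of Cesàro averages of the random walk — the standard argument that $\nu_G*\mu_{n_k}-\mu_{n_k}\to 0$ weakly.

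\textbf{The main obstacle} is the semicontinuity bookkeeping for $u$ on $\sub(G)$: I need to know on which side of the closed/open divide the level sets of $\mathcal I$ sit relative to the Chabauty topology, and to make sure the Markov-inequality bound survives the passage to the weak-$*$ limit at the boundary of $\sub_d(G)$ (where $u=\infty$). Concretely: is $\gC\mapsto\mathcal I(\gC)$ upper or lower semicontinuous? A small Chabauty perturbation can only \emph{introduce} new nearby elements in the limit, not destroy them, so $\mathcal I$ should be upper semicontinuous, i.e. $\{\mathcal I\ge r\}$ closed and $u=\mathcal I^{-\gd}$ lower semicontinuous with $\{u<R\}$ open — which is exactly the direction needed above. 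I would verify this carefully (it is essentially the statement that $\sub_d(G)$ is open together with continuity of the length of the shortest nontrivial element), and the rest is routine. The secondary nuisance — finiteness of $\int u\,d\mu$ for the starting measure — is genuinely only a nuisance, dispatched by truncation.
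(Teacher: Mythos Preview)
Your approach is the paper's: iterate (\ref{eq-Marg}) to get a uniform bound on $\int u\,d\mu_n$, apply Markov's inequality, and pass to the weak-$*$ limit; the paper handles the possibility $\int u\,d\mu=\infty$ exactly as you suggest, by first restricting to compact subsets of $\sub_d(G)$.

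There is, however, a real slip in your semicontinuity/portmanteau bookkeeping. From upper semicontinuity of $\mathcal I$ you correctly deduce that $u=\mathcal I^{-\gd}$ is lower semicontinuous, but lower semicontinuity makes $\{u>R\}$ open, not $\{u<R\}$; and even if your $U_R$ were open, portmanteau on its closed complement $F$ reads $\limsup_k\mu_{n_k}(F)\le\mu_\infty(F)$, which is the wrong direction for the bound you want. The fix is immediate and uses only what you already have: the set $V_R:=\{u>R\}=\{\mathcal I<R^{-1/\gd}\}$ \emph{is} open (having a nontrivial element in the open set $\exp B(R^{-1/\gd})\setminus\{1\}$ is a Chabauty-open condition on closed subgroups) and contains every non-discrete subgroup; Markov gives $\mu_{n_k}(V_R)\le L'/R$, portmanteau for open sets gives $\mu_\infty(V_R)\le\liminf_k\mu_{n_k}(V_R)\le L'/R$, and letting $R\to\infty$ finishes. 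The paper's own proof is equally terse at this passage-to-the-limit step, and the same remark applies there.
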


The proof is reminiscent of \cite{EM}. 

\begin{proof} 
By restricting to compact subsets of $\sub_d(G)$, we may allow ourself to suppose that $\nu$ is compactly supported and that $A:=\int u(\Lambda)d\nu(\Lambda)$ is finite. 

To prove Theorem \ref{thm:dSRS} we need to show that 
\begin{equation}\label{eq-Discrete1}
  \lim_{\gep\to 0}\nu_\infty({\{\Lambda:\mathcal{I}(\Lambda)<\gep\}})= 0.
\end{equation}
Inequality (\ref{eq-Marg}) implies that 
$$
 \int u(\Lambda)d(\mu_G*\nu)(\Lambda)=\int u(\Lambda^g)d\mu_G(g)d\nu(\Lambda)\le cA+b.
$$
Furthermore, iterating Condition (\ref{eq-Marg}) and summing the resulting geometric series we get
\[ 
 \int u(\Lambda)d(\mu_G^{(n)}*\nu)(\Lambda)< c^n A+C,
\] 
with $C:=b/(1-c)$, uniformly for all $n$. Set $M=A+C$.
Then, for every $\gt>0$ and $n\geq 1$ we have
\[ 
 \nu_n(\{\Lambda:u(\Lambda)\geq M\gt^{-1}\})< \gt.
\]
Setting $\gep=(\gt/M)^\frac{1}{\gd}$ we get
\[ 
 \nu_n(\{\Lambda:\mathcal{I}(\Lambda)\le\gep\})<\gt.
\]
Taking $n\to\infty$ gives 
$\nu_\infty(\{\Lambda:\mathcal{I}(\Lambda)\le\gep\})<\gt$,
and letting $\gt \to 0$ we get (\ref{eq-Discrete1}).
\end{proof}


\section{Essential results about discrete stationary random subgroups}

In this section we assemble some results about discrete stationary random subgroups that will be essential in the proofs of the main results.
We start by recalling the following classical result of Furstenberg (see also \cite[Theorem 2.16]{Bader-Shalom}):

\begin{thm}[Furstenberg]
Let $\mu$ be a probability measure on $G$ and let $(B,\nu_B)$ be the associated Poisson boundary. Let $X$ be a compact $G$-space and let $\nu$ be a probability measure on $X$. Then $\nu$ is $\mu$-stationary if and only if it is the $\nu_B$-barycentre of some measurable map $\xi:B\to \text{Prob}(X)$. 
\end{thm}

Recall that $\text{Prob}(X)$ is the weak-$*$ compact space of probability measures on $X$ and the $\nu_B$-barycentre of $\xi$ is the measure $\int \xi(\omega)d\nu_B(\omega)$. Thus, to be $\mu$-stationary is a property of the Poisson measure $\nu_B$ rather than the specific choice of $\mu$. In particular:

\begin{cor}\label{cor:same-stationary}
Suppose that $\mu_1$ and $\mu_2$ are two probability measures on $G$ corresponding to the same Poisson measure $(B,\nu_B)$. Then $(X,\nu)$ is $\mu_1$-stationary if and only if it is $\mu_2$-stationary.
\end{cor} 

This corollary allows us to vary the measure $\mu$ according to our needs, when analysing stationary measures, as long as we do not change the Poisson measure.

Let us now restrict to the case where $G$ is a connected semisimple Lie group without compact factors and $K$ is a maximal compact subgroup.
In view of the Iwasawa decomposition, $K$ acts transitively on $G/P$ where $P\le G$ is a minimal parabolic subgroup. Therefore there is a unique Borel regular $K$-invariant probability measure $\nu_P$ on $G/P$. Let $\mu$ be a probability measure on $G$ with support generating a Zariski dense subgroup. The space $G/P$ supports a unique $\mu$-stationary probability measure, which makes it the Poisson boundary for $(G,\mu)$  (see for example \cite{Fur63}).  Whenever $\mu$ is a bi-$K$-invariant probability measure on $G$, the unique $\mu$-stationary measure on $G/P$ must be $K$-invariant, hence equal to $\nu_P$. We record that the measure $\mu_G$ introduced in \S \ref{sec:random-discrete} is bi-$K$-invariant, so $(G/P,\nu_P)$ is the Poisson boundary for $\mu_G$.

The measure $\mu_G$ is not smooth\footnote{It is possible to show that some finite power $\mu_G^{(n)}$ is smooth, but in view of Corollary \ref{cor:same-stationary} we will not need that.} but obviously $G$ admits some smooth bi-$K$-invariant probability measure. Therefore, in view of Corollary \ref{cor:same-stationary}, statements about stationary measures with respect to a measure on $G$ which is assumed to be smooth will remain valid also for the non-smooth measure $\mu_G$.
For simplicity we will consider, without repeating it, only bi-$K$-invariant probability measures $\mu$ on $G$ although most of the statements apply to general (smooth) measures. This will allow us to make use of the special properties of $\mu_G$ and in particular properties that follow from Inequality (\ref{eq-Marg}) and from Theorem \ref{thm:dSRS}.
Thus in the sequel $\mu$ will refer to an arbitrary bi-$K$-invariant probability measure on $G$ while $\mu_G$ is the specific measure given in \S \ref{sec:nu_G}. 

Let us denote by $\text{d-SRS}(G)$ the space of all discrete $\mu$-stationary random subgroups of $G$. 
The following result from \cite{GLM} is a consequence of Inequality (\ref{eq-Marg}):

\begin{thm}[\cite{GLM}, Theorem 1.2]\label{lem:WUD}
The space $\text{d-SRS}(G)$ is weakly uniformly discrete. That is, for every $\gep>0$ there is an identity neighbourhood $U\subset G$ such that for every $\nu\in \text{d-SRS}(G)$, 
$$
\nu(\Lambda:\Lambda\cap U\ne\{1\})\le\gep.
$$
\end{thm}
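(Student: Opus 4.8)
The plan is to derive the statement from a single uniform tail estimate for the Margulis function $u(\gC)=\mathcal{I}(\gC)^{-\gd}$. Concretely, with $C:=b/(1-c)$ the constants from (\ref{eq-Marg}), I would prove that
\[
  \mu\bigl(\{\gL:u(\gL)\ge t\}\bigr)\le C/t
  \qquad\text{for every }\mu\in\text{d-SRS}(G)\text{ and every }t>0 .
\]
Once this is available the lemma follows immediately: given $\gep>0$, choose $\eta\in(0,1]$ small enough that $\eta^{-\gd}\ge C/\gep$ and put $U=\exp B(\eta)$. If a discrete subgroup $\gL$ satisfies $\gL\cap U\ne\{1\}$ then $\mathcal{I}(\gL)\le\eta$, hence $u(\gL)\ge\eta^{-\gd}$; therefore $\mu(\{\gL:\gL\cap U\ne\{1\}\})\le\mu(\{u\ge\eta^{-\gd}\})\le C\eta^{\gd}\le\gep$, with $U$ depending only on $\gep$ and $G$.

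For the tail estimate itself, I would first observe that were $u$ known to be $\mu$-integrable the proof would be a one-liner: integrating (\ref{eq-Marg}) against $\mu$ and using $\nu*\mu=\mu$ gives $\int u\,d\mu\le c\int u\,d\mu+b$, hence $\int u\,d\mu\le C$, and the estimate follows by Markov's inequality. The difficulty, and the one genuine obstacle, is that a $\nu$-stationary measure on $\sub_d(G)$ need not have $u$ integrable. I would circumvent this with the compact-exhaustion device already used in the proof of Theorem \ref{thm:dSRS}. Fix $\mu\in\text{d-SRS}(G)$, and for $j\ge1$ let $K_j=\{\gL\in\sub(G):\mathcal{I}(\gL)\ge1/j\}$; these are closed subsets of the compact space $\sub(G)$, they consist of discrete subgroups, and they increase to $\sub_d(G)$, so $p_j:=\mu(K_j)\to1$. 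For $j$ with $p_j>0$ set $\rho_j:=p_j^{-1}\mu|_{K_j}$, a probability measure supported on discrete subgroups with $\int u\,d\rho_j\le j^{\gd}$.

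Now I would run the iterated drift inequality on $\rho_j$. Iterating (\ref{eq-Marg}) along the walk (which stays on discrete subgroups, since discreteness is preserved by $\gL\mapsto\gL^g$) gives $\int u\,d(\nu^{(i)}*\rho_j)\le c^{i}j^{\gd}+C$, so the averages $\rho_j^{(n)}:=\frac1n\sum_{i=0}^{n-1}\nu^{(i)}*\rho_j$ satisfy $\int u\,d\rho_j^{(n)}\le\frac{j^{\gd}}{n(1-c)}+C$, whence $\rho_j^{(n)}(\{u\ge t\})\le t^{-1}\bigl(\frac{j^{\gd}}{n(1-c)}+C\bigr)$ by Markov's inequality. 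To transfer this back to $\mu$, I would use stationarity once more: $\mu=\frac1n\sum_{i=0}^{n-1}\nu^{(i)}*\mu=p_j\,\rho_j^{(n)}+\frac1n\sum_{i=0}^{n-1}\nu^{(i)}*(\mu|_{K_j^{c}})$, and the last term is a nonnegative measure of total mass $1-p_j$; hence $\mu(B)\le\rho_j^{(n)}(B)+(1-p_j)$ for every Borel set $B$. Taking $B=\{u\ge t\}$, letting $n\to\infty$ and then $j\to\infty$, yields $\mu(\{u\ge t\})\le C/t$, which is exactly the claimed uniform bound.

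Apart from the integrability point just discussed, the remaining ingredients are routine: the elementary Chabauty-topology facts about $\sub(G)$ (the sets $\{\mathcal{I}\ge1/j\}$ are compact and exhaust $\sub_d(G)$) and the stability of discreteness under conjugation, both of which rest on $G$ having no small subgroups. I expect essentially all of the work to be in isolating and neutralising the possible non-integrability of $u$; the drift estimate and the bookkeeping with (\ref{eq-Marg}) are then mechanical.
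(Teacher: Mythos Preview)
Your proposal is correct and follows the same core idea as the paper's argument: integrate the pointwise drift inequality (\ref{eq-Marg}) against a stationary $\mu$, use $\nu*\mu=\mu$ to get $\int u\,d\mu\le C$, and apply Markov's inequality. The paper only sketches this for compactly supported $\mu$ (where integrability of $u$ is automatic) and defers the general case to \cite{GLM}; your compact-exhaustion-and-averaging device, essentially reusing the mechanism from the proof of Theorem~\ref{thm:dSRS}, is a clean and standard way to remove that restriction and yields the same uniform tail bound $\mu(\{u\ge t\})\le C/t$.
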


Let us recall the straightforward proof when $\nu$ is a compactly supported discrete stationary random subgroup. In that case Inequality (\ref{eq-Marg}) gives:
$$
 \int u(\Lambda)d\nu=\int u(\Lambda)d\mu*\nu\le c\int u(\Lambda)d\nu+b.
$$
It follows that $\int u(\Lambda)d\nu\le C:=b/(1-c)$. Thus $\nu(\{\Lambda:u(\Lambda)\ge C/\gep\})\le \gep$.
Thus we can take $U$ to be the ball of radius $(\gep/C)^{\frac{1}{\gd}}$ around $1_G$.


\begin{cor}[\cite{GLM}, Corollary 1.6]
The space $\text{d-SRS}(G)$ is weak-* compact.
\end{cor}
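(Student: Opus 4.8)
The plan is to realize $\text{d-SRS}(G)$ as a weak-$*$ closed subset of the space $\text{Prob}(\sub(G))$ of Borel probability measures on the Chabauty space $\sub(G)$. Since $G$ is a connected Lie group, hence locally compact and second countable, $\sub(G)$ is compact and metrizable, so $\text{Prob}(\sub(G))$ is compact and metrizable for the weak-$*$ topology; consequently it is enough to prove that $\text{d-SRS}(G)$ is \emph{sequentially} weak-$*$ closed. So I would take $\mu_k\in\text{d-SRS}(G)$ with $\mu_k\to\mu_\infty$ weakly-$*$ and check that $\mu_\infty$ is again $\nu$-stationary and satisfies $\mu_\infty(\sub_d(G))=1$.

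Stationarity passes to the limit by soft arguments. Because the conjugation action $G\times\sub(G)\to\sub(G)$ is continuous, for every $f\in C(\sub(G))$ the function $\Lambda\mapsto\int_G f(g\Lambda g^{-1})\,d\nu(g)$ is bounded and, by dominated convergence, continuous; hence $\mu\mapsto\nu*\mu$ is weak-$*$ continuous on $\text{Prob}(\sub(G))$ and the set $\{\mu:\nu*\mu=\mu\}$ is weak-$*$ closed. In particular $\nu*\mu_\infty=\mu_\infty$.

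The only real issue is that $\sub_d(G)$ is open but not closed in $\sub(G)$, so a priori a weak-$*$ limit of discrete stationary random subgroups could leak mass onto the (Chabauty-closed) set of non-discrete subgroups; this is exactly what the weak uniform discreteness of $\text{d-SRS}(G)$ rules out, and it is the step that uses the substantive input of this section. Concretely, fix $\gep>0$, let $U\ni 1$ be the identity neighbourhood furnished by Theorem \ref{lem:WUD} — so $\mu_k(\{\Lambda:\Lambda\cap U\ne\{1\}\})\le\gep$ for all $k$ — and choose $0<r\le 1$ with $\exp B(r)\subseteq U$. Since $1\in\Lambda$ for every $\Lambda\in\sub(G)$, the set $\{\Lambda:\Lambda\cap U\ne\{1\}\}=\{\Lambda:\Lambda\cap(U\setminus\{1\})\ne\emptyset\}$ is Chabauty-open, so the portmanteau theorem gives $\mu_\infty(\{\Lambda:\Lambda\cap U\ne\{1\}\})\le\liminf_k\mu_k(\{\Lambda:\Lambda\cap U\ne\{1\}\})\le\gep$. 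Hence $\mu_\infty(\{\Lambda:\Lambda\cap U=\{1\}\})\ge 1-\gep$, and any such $\Lambda$ has $\mathcal{I}(\Lambda)\ge r>0$ and is in particular discrete, so $\mu_\infty(\sub_d(G))\ge 1-\gep$. Letting $\gep\to 0$ yields $\mu_\infty(\sub_d(G))=1$, whence $\mu_\infty\in\text{d-SRS}(G)$ and compactness follows. I do not anticipate any genuine obstacle here: once Theorem \ref{lem:WUD} (equivalently Inequality \eqref{eq-Marg}) is available, everything reduces to routine point-set topology of the Chabauty and weak-$*$ topologies.
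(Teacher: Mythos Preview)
Your proposal is correct and follows exactly the approach sketched in the paper: stationarity is preserved under weak-$*$ limits by a soft continuity argument, and discreteness of the limit is forced by weak uniform discreteness (Theorem~\ref{lem:WUD}). The paper compresses this to two clauses, while you have simply unpacked the portmanteau step and the continuity of $\mu\mapsto\nu*\mu$; there is no substantive difference.
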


\begin{proof}
It is obvious that a limit of stationary measures is stationary and it follows from Theorem \ref{lem:WUD} that a limit of discrete stationary random subgroups is also discrete.
\end{proof}

The extreme points of the compact convex space $\text{d-SRS}(G)$ are ergodic (the converse is also true, see \cite[Corollary 2.7]{Bader-Shalom}). Thus by the Choquet integral theorem every discrete stationary random subgroup is a barycentre of some probability measure on the set of ergodic discrete stationary random subgroups. This fact allows us to assume ergodicity when proving various results about discrete stationary random subgroups.

Let $P$ be a minimal parabolic subgroup of $G$ with Langlands decomposition $P=MAN$.

\begin{lem}\label{lem-StatioHS}
Let $H\subset G$ be an algebraic subgroup. The homogeneous space $G/H$ admits a $\mu$-stationary probability measure if and only if $H$ contains $AN$, up to conjugacy. 
\end{lem}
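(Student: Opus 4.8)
The plan is to reduce the existence of a $\nu$-stationary measure on $G/H$ to a statement about the Poisson boundary $(G/P,\mu_P)$, and then use the algebraicity of $H$ together with the structure of parabolic subgroups. One direction is immediate: if $H\supset AN$ up to conjugacy, then $G/H$ is a $G$-equivariant quotient of $G/P$ (since $P=MAN$ and $M$ is compact, after conjugation $AN\le H$ gives a surjection $G/(AN)\twoheadrightarrow G/H$, and $G/P$ is a further quotient of $G/(AN)$ by the compact group $M$), so the pushforward of $\mu_P$ is a $\nu$-stationary probability measure on $G/H$; here I use that $\mu_P$ is the $\nu$-stationary (indeed the unique such) measure on the Poisson boundary.

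For the converse, suppose $\lambda$ is a $\nu$-stationary probability measure on $G/H$. Since $(G/P,\mu_P)$ is the Poisson boundary for $(G,\nu)$, the stationary measure $\lambda$ arises as the barycentre of a measurable $G$-map $\beta\colon G/P\to\mathrm{Prob}(G/H)$, i.e. $\lambda=\int_{G/P}\beta(\xi)\,d\mu_P(\xi)$ and $\beta(g\xi)=g_*\beta(\xi)$. The next step is a contraction/properness argument: because $H$ is algebraic, the $G$-action on $\mathrm{Prob}(G/H)$ has the property that the stabilizer of any measure is again (contained in a conjugate of) an algebraic subgroup, and in fact one can push $\beta$ to a boundary map landing in Dirac masses. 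Concretely, I would argue that for $\mu_P$-a.e.\ $\xi$ the measure $\beta(\xi)$ is supported on a single $H$-coset, using that the action of $AN$ on $G/H$ has no finite invariant measure unless there is a fixed point: the key point is that a minimal parabolic acting on an algebraic homogeneous space $G/H$ with a stationary (hence, restricted to $AN$, an invariant since $AN$ is amenable... ) — more carefully, restrict $\beta$ along a generic geodesic so that the relevant point-stabilizer in $G$ contains a conjugate of $AN$. This identifies an almost-sure value $\beta(\xi)=\delta_{g(\xi)H}$ with $g(\xi)^{-1}\,\mathrm{Stab}_P(\xi)\,g(\xi)\subset H$ for the appropriate conjugate of $P$, and since $\mathrm{Stab}_G(\xi)$ is (a conjugate of) $P$, we conclude $H$ contains a conjugate of $AN = [P,P]\cdot A$ — more precisely the derived-type subgroup $AN$ of $P$.

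The main obstacle I anticipate is making rigorous the passage from an arbitrary stationary \emph{probability} measure $\lambda$ to a \emph{measurable map to Dirac masses} on $G/H$, i.e.\ ruling out the possibility that $\beta(\xi)$ is genuinely diffuse. The standard tool here is a contracting-property/Furstenberg-type lemma: on the Poisson boundary, the $G$-action on $\mathrm{Prob}(G/H)$ contracts because $G/H$, being a quasi-projective $G$-variety, embeds $G$-equivariantly into a projective space on which the boundary action is \emph{strongly proximal}, so $\mu_P$-a.e.\ fibre of any equivariant boundary map to $\mathrm{Prob}(\mathbb{P}^N)$ is a Dirac mass; algebraicity of $H$ is exactly what is needed to realize $G/H\hookrightarrow\mathbb{P}^N$ this way (Chevalley). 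Once $\beta(\xi)=\delta_{\sigma(\xi)}$ with $\sigma\colon G/P\to G/H$ a measurable $G$-map, rationality/algebraicity of the $G$-action forces $\sigma$ to be rational, hence defined by an inclusion $P'\subset H$ for a conjugate $P'$ of $P$; then $AN\subset P' \subset H$ up to conjugacy, completing the proof. I would also remark that $M$ being compact means the distinction between ``$H\supset AN$'' and ``$H\supset P$'' is irrelevant for the existence of a probability measure, which is why the clean statement is in terms of $AN$.
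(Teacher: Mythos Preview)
Your overall strategy---pass to the Poisson boundary map $\beta\colon G/P\to\mathrm{Prob}(G/H)$ and use a Chevalley embedding of $G/H$ into projective space---is exactly the paper's approach, but both directions as written have issues. In the easy direction, your claim that $G/H$ is a quotient of $G/P$ is backwards: $G/P$ is a quotient of $G/(AN)$, not the other way, so there is no $G$-map $G/P\to G/H$ in general and you cannot simply push $\mu_P$ forward. The paper instead just observes that if $AN\subset H$ then Iwasawa $G=KAN\subset KH$ makes $G/H$ compact, and any compact $G$-space carries a $\nu$-stationary probability measure.

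In the hard direction, you correctly identify the crux---showing that the conditional measures $\beta(\xi)$ are Dirac---but ``strong proximality'' of $G$ on $\mathbb{P}(V)$ is not the right tool: that only tells you the $G$-orbit closure of any measure contains a Dirac, not that a $P$-invariant measure \emph{is} Dirac. The paper handles this directly and avoids your detour through a rational $G$-map entirely. It takes the single $P$-invariant measure $\kappa(P)$, passes to an $AN$-ergodic component $\mu_0$ viewed in $\mathbb{P}(V)$ via Chevalley, and proves a concrete lemma (Lemma~\ref{lem-ProjActions}) that every ergodic $AN$-invariant probability measure on projective space is a Dirac mass at an $AN$-fixed line; this is done by triangularizing the $AN$-action and reducing to affine space. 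That immediately yields $\mu_0=\delta_{[gv]}$ with $AN\subset\mathrm{Stab}_G[gv]=gHg^{-1}$, and you are done---no need to show the whole boundary map is Dirac-valued, nor to invoke any rationality theorem for measurable $G$-maps.
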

After writing the proof of Lemma \ref{lem-StatioHS} we found out that the same statement was already proven in \cite[Prop 3.2]{NZ}. Our proof is different and self-contained so we present it below for completeness.
\begin{proof}
If $H$ contains a conjugate of $AN$ then $G/H$ is compact so the existence of a $\mu$-stationary probability measure is clear. Now assume that $\nu$ is a $\mu$-stationary probability measure on $G/H$. Since $(G/P,\nu_P)$ is the Poisson boundary we get a $G$-equivariant map $\kappa\colon G/P\to \text{Prob}(G/H)$ such that $\nu=\int_{G/P}\kappa(gP)d\nu_P(gP).$ The measures $\kappa(gP)$ are $gPg^{-1}$-invariant probability measures almost surely. By conjugating $P$ if necessary we can assume without loss of generality that $\kappa(P)$ is a $P$-invariant probability measure. 
We will show that the existence of such a measure implies that $H$ contains a conjugate of $AN$. Let $\nu_0$ be an ergodic $AN$-invariant component of $\nu$. By Chevalley's theorem \cite[5.1]{Bor} there is a rational representation $G\curvearrowright V$ and a line $[v]\in \mathbb P^1(V)$ such that ${\rm Stab}_G [v]=H$. We will think of $\nu_0$ as an ergodic $AN$-invariant probability measure on $\mathbb P^1(V)$ supported on the $G$ orbit of $[v]$. By Lemma \ref{lem-ProjActions}, we must have $\nu_0=\delta_{[gv]}$ for some $g\in G$ and $[gv]$ must be fixed by $AN$. If follows that $gHg^{-1}\supset AN$.
\end{proof}

The following Lemma \ref{lem-ProjActions} is required also in the proof of 
Lemma \ref{lem-Projective}.

\begin{lem}\label{lem-ProjActions}
Let $V$ be a rational real representation of $AN$. Then, any ergodic $AN$-invariant probability measure on $\mathbb P^1(V)$ is supported on a single line fixed by $AN$. 
\end{lem}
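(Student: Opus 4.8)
The plan is to reduce the action on $\mathbb P^1(V)$ to the action of the abelian unipotent radical $N$ and the torus $A$ separately, using the fact that $AN$ is amenable and solvable, and to exploit that unipotent orbits on projective space are bounded only when they are fixed points. First I would decompose $V$ into weight spaces for the $A$-action: since $A$ is a split torus acting rationally, $V = \bigoplus_{\chi} V_\chi$ where the $\chi$ range over a finite set of characters of $A$. Given an ergodic $AN$-invariant probability measure $\mu$ on $\mathbb P^1(V)$, I would first analyze the $N$-action. Because $N$ is unipotent, for a fixed vector $v$ the orbit map $n \mapsto [nv]$ extends to a polynomial map, and the key classical fact (Furstenberg's lemma / the Borel density-type argument for unipotent groups) is that any $N$-invariant probability measure on $\mathbb P^1(V)$ is supported on the projectivization of the subspace $V^N$ of $N$-fixed vectors. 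Thus $\mu$ is supported on $\mathbb P^1(V^N)$, and $V^N$ is $A$-invariant (since $A$ normalizes $N$), so it inherits the weight decomposition $V^N = \bigoplus_\chi (V^N)_\chi$.

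Next I would bring in the $A$-action on $\mathbb P^1(V^N)$. Here the obstacle is that $A$ is not unipotent, so an $A$-invariant measure on projective space need not be a point mass in general — but we have more structure: $s \in G$ (equivalently the relevant one-parameter subgroup of $A$) was chosen in \S\ref{sec:nu_G} to have strong expansion/contraction properties on the unipotent radical, and in any case the characters $\chi$ appearing are nontrivial on generic directions. The standard approach is a contraction argument: pick a one-parameter subgroup $a_t \subset A$ such that the weights $\chi(a_t)$ are pairwise distinct for all large $t$; then $a_t$ acting on $\mathbb P^1(V^N)$ contracts, as $t \to \pm\infty$, toward the projectivized top (resp.\ bottom) weight space. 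An ergodic $a_t$-invariant probability measure must therefore be supported on the union of the extreme weight lines, and iterating this over enough one-parameter subgroups of the full torus $A$ forces $\mu$ to be supported on lines $[v]$ with $v$ a simultaneous eigenvector for all of $A$, i.e.\ lines lying in a single weight space $(V^N)_\chi$ with the further property that they are $A$-fixed in $\mathbb P^1$ (automatic, since $A$ acts by scalars on each weight space). Such a line is fixed by all of $N$ (being in $V^N$) and by all of $A$, hence by $AN$.

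Finally, to conclude that the measure is supported on a \emph{single} line: the set of $AN$-fixed lines in $\mathbb P^1(V)$ is the projectivization of the $AN$-fixed subspace $W := (V^N)^A = \bigoplus_\chi \{v \in (V^N)_\chi : \chi = \text{trivial on }A\}$, which is a \emph{closed} $AN$-fixed subset on which $AN$ acts trivially. An ergodic invariant measure for a group acting trivially on a space is a point mass, so $\mu = \delta_{[v_0]}$ for a single $AN$-fixed line $[v_0]$.

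I expect the main obstacle to be the $A$-part: making the contraction argument clean requires choosing the one-parameter subgroups of $A$ so that all relevant weights are separated, and handling the possibility that $\mu$ initially charges several weight lines before the contraction pins it down. The unipotent part is routine (Furstenberg), and the reduction at the end is formal once $AN$-invariance of the weight decomposition is in place; the care is entirely in organizing the torus contractions so that they collectively kill every non-fixed direction.
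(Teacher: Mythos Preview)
Your overall strategy --- reduce to $\mathbb P^1(V^N)$ via the Furstenberg/Borel-density fact that unipotent-invariant probability measures on projective varieties are supported on fixed points, then handle the torus action, then use ergodicity on a trivially-acted set --- is sound and genuinely different from the paper's proof. The paper instead builds an $AN$-invariant complete flag $V_1\subset\cdots\subset V_d$, finds the minimal $i_0$ with $\mu(\mathbb P^1(V_{i_0}))>0$, and then passes to an \emph{affine} chart $\mathbb P^1(V_{i_0})\setminus\mathbb P^1(V_{i_0-1})\hookrightarrow V_{i_0}$ (with a twisted $AN$-action), reducing to the linear Lemma~\ref{lem-RatRepProb}. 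Your route is more conceptual and avoids the auxiliary affine lemma; the paper's route is more elementary in that it never invokes the Furstenberg lemma as a black box.

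However, your treatment of the $A$-step contains a real error. An $A$-fixed \emph{line} in $\mathbb P^1(V^N)$ is any line contained in a single weight space $(V^N)_\chi$, for \emph{any} character $\chi$ --- not just the trivial one --- since $A$ acts by scalars on each weight space. Hence the set of $AN$-fixed lines is $\bigcup_\chi \mathbb P^1\bigl((V^N)_\chi\bigr)$, not $\mathbb P^1\bigl((V^N)^A\bigr)$ as you wrote; the latter may well be $\{0\}$. Relatedly, your contraction claim that an $a_t$-invariant measure is supported on the ``extreme weight lines'' is too strong: it is supported on \emph{all} $a_t$-fixed lines (the intermediate weight spaces are fixed too). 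The correct argument is: for a one-parameter subgroup $a_t$ separating all weights, each non-fixed orbit is a copy of $\mathbb R$ with translation action and carries no invariant probability, so $\mu$ is supported on $a_t$-fixed lines; intersecting over such subgroups gives $\bigcup_\chi \mathbb P^1((V^N)_\chi)$, on each piece of which $AN$ acts trivially, and ergodicity then forces $\mu$ to be a point mass. Finally, the reference to the element $s$ from \S\ref{sec:nu_G} is a red herring: this lemma concerns arbitrary $AN$-invariant measures and has nothing to do with $\nu_G$.
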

\begin{proof}
Let $\nu$ be an ergodic $AN$-invariant probability measure on $\mathbb P^1(V)$. The group $N$ is a unipotent algebraic group so any rational representation of $N$ has a fixed vector \cite[4.8]{Bor}. Let $V'$ be the subspace of $N$-fixed vectors. It is preserved by $A$. Since $A$ is an $\mathbb R$-split torus, any rational representation of $A$ over $\mathbb R$ decomposes into a direct sum of $A$-eigenspaces \cite[8.4]{Bor}. We deduce that there exists a one-dimensional subspace $V_1\subset V'$ which is preserved by $AN$. Reasoning inductively we construct a basis $e_1,\ldots ,e_d$ of $V$ such that $V_i=\mathbb R e_1+\ldots +\mathbb R e_i$ are preserved by $AN$ and each $e_i$ is an eigenvector of $A$. Write $\chi_i$ for the character of $A$ such that $ae_i=\chi_i(a)e_i.$ 

Let $i_0$ be the minimal index for which $\nu(\mathbb P^1(V_{i_0}))>0$. The sets $\mathbb P^1(V_{i_0})$ are all $AN$-invariant, so by ergodicity $\nu(\mathbb P^1(V_{i_0})\setminus \mathbb P^1(V_{i_0-1}))=1.$ Consider the map $\iota\colon \mathbb P^1(V_{i_0})\setminus \mathbb P^1(V_{i_0-1})\to V_{i_0}$
\[ \iota([x_1,\ldots,x_{i_0-1},x_{i_0}]):= (x_1/x_{i_0},\ldots, x_{i_0-1}/x_{i_0},1).
\]
Let $W$ be the space $V_{i_0}$ with the action of $AN$ given by $an w=\chi_{i_0}(a)^{-1}anw.$ The map $\mathbb P^1(V_{i_0})\setminus \mathbb P^1(V_{i_0-1})\to W$ is $AN$-equivariant. The measure $\iota_*\nu$ is an ergodic $AN$-invariant probability measure on $ W$. By Lemma \ref{lem-RatRepProb}, $\iota_*\nu=\delta_w$ for some $AN$ invariant vector $w\in W$. This means that $\nu$ itself was supported on the line $[w+e_{i_0}]$, which is fixed by $AN$. 
\end{proof}
\begin{lem}\label{lem-RatRepProb} Let $W$ be a finite dimensional rational representation of $AN$. Any ergodic $AN$-invariant probability measure on $W$ is of the form $\delta_w$ for an $AN$-fixed vector $w\in W$. 
\end{lem}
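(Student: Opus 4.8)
The plan is to reduce everything to the behavior of a single $A$-eigenvalue and the unipotent action of $N$, using ergodicity at each stage to collapse the measure onto a fixed point. First I would decompose $W$ into a filtration adapted to $AN$ exactly as in the proof of Lemma \ref{lem-ProjActions}: since $N$ is unipotent it has a nonzero fixed subspace \cite[4.8]{Bor}, which is $A$-stable, and since $A$ is $\mathbb R$-split that subspace decomposes into $A$-eigenlines \cite[8.4]{Bor}; iterating, one gets a full flag $0=W_0\subset W_1\subset\cdots\subset W_d=W$ of $AN$-invariant subspaces with $\dim W_i/W_{i-1}=1$ and each quotient carrying a character $\chi_i$ of $A$ on which $N$ acts trivially. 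The affine subspaces $\{w: w\in W_i\}$ are $AN$-invariant closed subsets, so by ergodicity of $\mu$ there is a minimal $i_0$ with $\mu(W_{i_0})=1$, and then $\mu(W_{i_0}\setminus W_{i_0-1})$ is either $0$ or $1$; in the former case $\mu$ is supported on $W_{i_0-1}$, contradicting minimality unless $i_0=0$, i.e. unless $\mu=\delta_0$, so we may assume $\mu$ gives full mass to $W_{i_0}\setminus W_{i_0-1}$.

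The next step is to push $\mu$ forward along the coordinate $\pi\colon W_{i_0}\to W_{i_0}/W_{i_0-1}\cong\mathbb R$, the top coordinate in a basis compatible with the flag. This is an $AN$-equivariant map where $N$ acts trivially on the target and $A$ acts through $\chi_{i_0}$. The pushforward $\pi_*\mu$ is an ergodic $A$-invariant probability measure on $\mathbb R$ with $A$ acting by the character $\chi_{i_0}$: if $\chi_{i_0}$ is nontrivial, the only $A$-invariant probability measure on $\mathbb R\setminus\{0\}$ scaling by a nontrivial character would have to be invariant under multiplication by an unbounded set of scalars, which is impossible, so $\pi_*\mu=\delta_c$; ergodicity forces $c=0$ if $\chi_{i_0}$ is nontrivial, but $c=0$ would put $\mu$ on $W_{i_0-1}$, a contradiction — hence in fact $\chi_{i_0}$ is trivial on the support and $\pi_*\mu=\delta_c$ for some $c\in\mathbb R$. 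Rescaling (by an element of $A$ if $c\ne 0$, or noting $c$ may be any value when $\chi_{i_0}$ is trivial) and translating, we reduce to a measure on the affine hyperplane $\{\pi=c\}$, which is an $AN$-invariant affine subspace of one lower dimension. Inducting on $\dim W$, the measure on this hyperplane is a point mass at an $AN$-fixed point, and tracing back the identifications shows $\mu=\delta_w$ for an $AN$-fixed $w$.

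The main obstacle I anticipate is the bookkeeping in the inductive step: after conditioning on the affine hyperplane $\{\pi=c\}$ one must check that the residual action of $AN$ there is again a \emph{rational} action on a genuine (finite-dimensional) vector space so that the inductive hypothesis applies, and that ergodicity of $\mu$ survives the restriction. The cleanest way around this is probably not to condition but to argue directly: show that the whole of $\mu$ is supported on a single $AN$-orbit closure inside $W_{i_0}\setminus W_{i_0-1}$ and that this orbit closure, being the orbit of a unipotent-by-split-torus group acting algebraically, contains a fixed point toward which the $A$-flow contracts — so ergodicity pins $\mu$ to that fixed point. Either route works; the delicate point in both is handling the case where some intermediate character $\chi_i$ is trivial, so that the $A$-action provides no contraction and one must lean on the $N$-action (which always has fixed vectors) together with ergodicity to finish.
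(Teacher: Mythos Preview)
Your flag construction and the argument that $\chi_{i_0}$ must be trivial are correct, and the overall architecture --- build the $AN$-flag, use ergodicity to localize, induct on dimension --- is the same as the paper's. The gap is the one you flag at the end, and it is more than bookkeeping. Once $\mu$ is pinned to the affine hyperplane $\{\pi=c\}$ with $c\neq 0$, the restricted $AN$-action there is genuinely affine, not linear: choosing $e_{i_0}$ to be an $A$-eigenvector gives an $A$-fixed basepoint $c\,e_{i_0}$ in the fiber, but $N$ typically moves it. For instance, in the adjoint representation of $\frak{sl}_2(\mathbb R)$ with $W_2$ the Borel subalgebra and $e_2=H$, the fiber $\{aE+cH:a\in\mathbb R\}$ carries the $N$-action $a\mapsto a-2cs$, a nontrivial translation with no fixed point. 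So in general there is no $AN$-fixed point in the fiber to recenter around, and your inductive hypothesis for rational (linear) representations does not apply. Your fallback --- orbit closures and ``$A$-contraction toward a fixed point'' --- is not a proof as stated; the point is precisely that when $\chi_{i_0}$ is trivial there is no $A$-contraction on that coordinate and one must use $N$, but you have not said how.

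The paper closes this gap by running the induction in the opposite direction. Instead of projecting to the top quotient $W_{i_0}/W_{i_0-1}$, it quotients by the \emph{bottom} $AN$-line $W_1$ and applies the inductive hypothesis to the honest linear representation $W/W_1$. This forces the support of $\mu$ into a single coset $w+W_1$, which sits inside the at-most-two-dimensional \emph{linear} subspace $W_1+\mathbb R w$; a separate bare-hands treatment of the base cases $\dim W\le 2$ then finishes. That two-dimensional base case is exactly where the affine-versus-linear obstruction is met head-on: when $N$ acts nontrivially one shows directly, via the translation argument, that every $N$-invariant probability measure is supported on $\mathbb R e_1$, and when $N$ acts trivially one uses the $A$-eigenspace decomposition. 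If you want to keep your top-down scheme, the clean fix is to strengthen the inductive statement to cover affine $AN$-actions whose linear part is rational; then the restriction to $\{\pi=c\}$ stays within scope and your argument goes through.
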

\begin{proof}
We prove the statement by induction, with cases $\dim W=1,2$ serving as the induction base.  If $\dim W=1$ then $N$ acts trivially because unipotent actions must fix a non-zero vector. If $A$ acts non-trivially then $\nu=\delta_{0}$ because no probability measure on $\mathbb R\setminus \{0\}$ can be invariant under dilations. Otherwise, $AN$ acts trivially and $\nu=\delta_w$ for some $w\in W$, by ergodicity. 

Suppose $\dim W=2$. Let $e_1,e_2$ be a basis of $W$ such that $e_1$ is $N$-invariant and both $e_1,e_2$ are eigenvectors of $A$. If the action of $N$ is nontrivial, then there exists $n_0\in N$ and $x_0\in \mathbb R\setminus \{0\}$ such that $n_0(e_2)=e_2+x_0 e_1$. For every compact set $K\subset W$ the intersection $\bigcap_{k\in \mathbb Z} n_0^k K\subset K\cap (\mathbb R e_1),$ so every $N$-invariant probability measure on $W$ must have $\nu(\mathbb Re_1)=1$ and the lemma follows now from the one dimensional case. Consider the case that the action of $N$ is trivial. There are rational characters $\chi_1,\chi_2$ of $A$ such that $a e_i=\chi_i(a)e_i$. Let $W'=\sum_{\chi_i=1} \mathbb Re_i$. Then for any compact subset $K\subset W$ we have $\bigcap_{a\in A} aK\subset K\cap W'$. It follows that $\nu$ is supported on $W'$. The action of $AN$ on $W'$ is trivial so by ergodicity $\nu=\delta_w$ for some $w\in W'$. 

We move to the general case $\dim W\geq 3$.  Let $W_1$ be a one dimensional subspace preserved by $AN$. By the inductive hypothesis, the pushforward of $\nu$ to $W/W_1$ is supported on a single element. It follows that $\nu$ is supported on a single line $w+W_1$ for some $w\in W$. Hence $\nu(W_1+\mathbb R w)=1$. We have $\dim (W_1+\mathbb R w)\leq 2$ so the lemma follows from the first two cases.
\end{proof}

\begin{lem}\label{lem-RootGen} Let $\Phi$ be a simple root system (see \cite[\S 4.2]{Knapp}) spanning a Euclidean space $V$. Let $V_0\subset V$ be a proper subspace and let $S=\Phi\cap V_0$. Then \begin{enumerate}
\item $\Phi\setminus S$ generates the whole root system $\Phi$.
\item $\Phi\setminus S$ spans $V$.
\end{enumerate}
\end{lem}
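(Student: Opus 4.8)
\textbf{Proof proposal for Lemma \ref{lem-RootGen}.}

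The plan is to prove both statements simultaneously, since (1) clearly implies (2): if the sub-root-system generated by $\Phi\setminus S$ (in the sense of closure under the reflections $s_\alpha$, $\alpha\in\Phi\setminus S$, i.e. the span of the $W$-orbit) is all of $\Phi$, then in particular $\Phi\setminus S$ spans the same subspace as $\Phi$, namely $V$. So I would focus on (1), and actually it is cleanest to prove the spanning statement (2) first and then bootstrap.

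For (2): let $V_1=\operatorname{span}(\Phi\setminus S)\subseteq V$ and suppose for contradiction $V_1\subsetneq V$. Since $S=\Phi\cap V_0$ and $\Phi\setminus S\subseteq V_1$, we have $\Phi\subseteq V_0\cup V_1$. But a vector space over an infinite field (here $\mathbb{R}$) is never the union of two proper subspaces, and $\Phi$ spans $V$; hence $\Phi\not\subseteq V_0\cup V_1$ unless $V_0=V$ or $V_1=V$. Since $V_0$ is proper by hypothesis, we get $V_1=V$, proving (2). The one subtlety here is to phrase it correctly: $\Phi$ spans $V$ and $\Phi\subseteq V_0\cup V_1$ forces $V_0\cup V_1=V$ as far as the span goes, and the union-of-two-subspaces fact does the rest. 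Actually I should argue it as: pick $v\in\Phi\setminus V_0$ (exists since $\Phi$ spans $V\neq V_0$) and $w\in\Phi\setminus V_1$ if $V_1\neq V$; then $v+w\in V$ but a suitable $\mathbb{R}$-combination, or rather enough of the roots $v, w$, and we derive a contradiction from the standard lemma. Let me instead just directly invoke: if $V_1\neq V$ then $V_0\cup V_1\neq V$, yet $\Phi\subseteq V_0\cup V_1$ contradicts $\operatorname{span}\Phi=V$.

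For (1): I will use that $\Phi$ is irreducible (``simple root system''), which means its Dynkin diagram is connected. Let $\Phi'$ be the root subsystem generated by $\Phi\setminus S$, i.e. the smallest subset of $\Phi$ containing $\Phi\setminus S$ and closed under the reflections of its elements; equivalently $\Phi' = \Phi\cap\operatorname{span}(\Phi\setminus S) = \Phi\cap V_1 = \Phi$ by (2) — wait, that already gives it, since a root subsystem of $\Phi$ equals $\Phi\cap(\text{its span})$ when... hmm, that last equality (that a ``closed'' subsystem is the intersection of $\Phi$ with its span) is not automatic. So the honest route for (1): take a base $\Delta$ of $\Phi$ (simple roots) with connected Dynkin diagram. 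By (2), $\Phi\setminus S$ spans $V$, so it is not contained in any hyperplane; in particular for each $\alpha\in\Delta$ the hyperplane $\alpha^\perp$ does not contain $\Phi\setminus S$, meaning some $\beta\in\Phi\setminus S$ is non-orthogonal to $\alpha$. Now use the standard fact that a subset $T\subseteq\Phi$ with $\operatorname{span}(T)=V$ generates $\Phi$ as a root system: indeed the subgroup $W_T\le W$ generated by $\{s_\beta:\beta\in T\}$ acts on $V$ with no nonzero fixed vector (as $\bigcap_{\beta\in T}\beta^\perp = (\operatorname{span} T)^\perp = 0$), and a reflection subgroup of an irreducible finite Coxeter group $W$ that acts without nonzero fixed points must be all of $W$ — hence $W_T=W$, and then $\Phi = W\cdot\Delta \subseteq$ the root system generated by $T$ together with... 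Actually the clean statement is: if $T\subseteq\Phi$ spans $V$ then $W_T$ acts irreducibly-ly-enough that $W_T = W$; once $W_T = W$, the root system generated by $T$ is $W_T\cdot T = W\cdot T$, and I need this to be all of $\Phi$. For that: $W\cdot T$ is a $W$-stable subset of $\Phi$ spanning $V$; a $W$-stable spanning subset of an irreducible $\Phi$ is all of $\Phi$ because $\Phi$ has at most two root lengths and each length forms a single $W$-orbit spanning $V$ (the short roots already span, and the long ones too when present), so any nonempty $W$-stable subset either is all short roots, all long roots, or everything, and only the latter spans $V$ in general — careful in the simply-laced case all roots are one orbit so it is immediate. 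The \textbf{main obstacle} is getting this last root-system-theoretic step stated correctly and deciding how much to prove versus cite; I would cite Bourbaki (Lie groups and algebras, Ch. VI) for ``a $W$-stable spanning subset of an irreducible root system is the whole system'' and for ``a reflection subgroup acting without fixed points in the irreducible case is the whole Weyl group'', keeping the written proof to: reduce (1) to (2) via these two facts, then prove (2) by the union-of-two-proper-subspaces argument.
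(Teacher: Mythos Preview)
Your argument for (2) is a non-sequitur. You write that $\Phi\subseteq V_0\cup V_1$ together with $\operatorname{span}\Phi=V$ forces $V_0=V$ or $V_1=V$, invoking the fact that $V$ is not a union of two proper subspaces. But $\Phi$ is a \emph{finite} set, and a finite spanning set can perfectly well lie in a union of two proper subspaces: the standard basis $\{e_1,e_2\}$ spans $\BR^2$ yet sits inside $\BR e_1\cup\BR e_2$. More to the point, your argument never uses irreducibility of $\Phi$, and irreducibility is essential: in $A_1\times A_1$ with $V_0=\BR e_1$ one has $S=\{\pm e_1\}$ and $\Phi\setminus S=\{\pm e_2\}$ spans only a line.

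Your argument for (1) then rests on two statements you propose to cite from Bourbaki, namely that (a) a reflection subgroup of an irreducible Weyl group acting without nonzero fixed vectors is the whole Weyl group, and (b) a $W$-stable spanning subset of an irreducible $\Phi$ equals $\Phi$. Both are \emph{false}, and a single example kills them: in $B_2$ take $T$ to be the four long roots $\pm e_1\pm e_2$. Then $T$ spans $V$, is $W$-stable, and the reflections $s_\beta$ for $\beta\in T$ generate a subgroup of order $4$ inside $|W(B_2)|=8$; the root subsystem generated by $T$ is the $D_2$ formed by the long roots, not all of $\Phi$. This is exactly the phenomenon flagged in the Remark following the lemma: $B_2,C_2,G_2,B_3,C_3$ all split as unions of two proper spanning root subsystems. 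So ``$T$ spans $V$'' is strictly weaker than ``$T$ generates $\Phi$'', and your reduction of (1) to (2) cannot work without using the specific hypothesis that the \emph{complement} $S$ lies in a proper subspace.

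The paper's proof uses that hypothesis directly. It runs an iterative shrinking argument: set $E_0=V_0$, $S_0=S$, and at each step define $E_{i+1}=E_i\cap(\Phi\setminus S_i)^\perp$, $S_{i+1}=\Phi\cap E_{i+1}$. One checks that any $\lambda\in S_i\setminus S_{i+1}$ is non-orthogonal to some $\alpha\in\Phi\setminus S_i$, and then $s_\alpha(\lambda)=\lambda-c\alpha$ with $c\neq 0$ lies outside the subspace $E_i$ (since $\lambda\in E_i$, $\alpha\notin E_i$), hence in $\Phi\setminus S_i$; thus $\lambda\in\langle\Phi\setminus S_i\rangle$ and $\langle\Phi\setminus S_i\rangle=\langle\Phi\setminus S_{i+1}\rangle$. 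When the sequence stabilizes, $S_i\perp(\Phi\setminus S_i)$, which by irreducibility forces $S_i=\emptyset$ (the alternative $S_i=\Phi$ is ruled out because $S_i\subseteq V_0$ is contained in a proper subspace). Then (2) follows from (1). The crucial difference from your approach is that the containment $S_i\subseteq E_i\subseteq V_0$ is maintained throughout, so the ``proper subspace'' hypothesis is doing real work at the terminal step.
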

\begin{proof}
For any subset $F\subset \Phi$, let $\langle F\rangle$ be the smallest subset of $\Phi$ containing $F$ that is closed under taking reflections. For the first assertion we need to show $\langle \Phi\setminus S\rangle=\Phi$. Let $S_0=S, E_0=V_0$. We define inductively the subsets $S_i$ and subspaces $E_i$.
\[ E_{i+1}=E_i\cap (\Phi\setminus S_i)^{\perp}, \quad S_{i+1}=\Phi\cap E_{i+1}.\]
By construction $S_{i+1}\subset S_i$ and $E_{i+1}\subset E_i$. We argue that $\langle \Phi\setminus S_i\rangle =\langle \Phi\setminus S_{i+1}\rangle.$ The inclusion $\langle \Phi\setminus S_i\rangle\subset \langle \Phi\setminus S_{i+1}\rangle$ is clear, so it is enough to show that $S_i\setminus S_{i+1}\subset \langle \Phi\setminus S_i\rangle.$ Let $\lambda\in S_i\setminus S_{i+1}$. By definition, we must have $\lambda\not \in E_{i+1}$, so there exists a root $\alpha\not \in S_i$ such that $\langle \lambda,\alpha\rangle\neq 0$. The reflection (\cite[p. 69]{Knapp})
\[ s_{\alpha}(\lambda)=\lambda-\frac{2\langle\lambda,\alpha\rangle}{\|\alpha\|^2}\alpha\] is not in $S_i$ because $\lambda\in E_i$ and $\alpha\not\in E_i$. We deduce that $\lambda=s_\alpha(s_\alpha(\lambda))\in \langle \Phi\setminus S_i\rangle.$ This proves that $\langle \Phi\setminus S_i\rangle =\langle \Phi\setminus S_{i+1}\rangle.$ 

The sequence of sets $S_i$ eventually stabilizes, so we have $S_{i+1}=S_i$ for some $i$. Therefore $S_i\subset (\Phi\setminus S_i)^\perp.$ The root system $\Phi$ is simple, so this is possible only if $S_i=\emptyset.$ We deduce that $\langle \Phi\setminus S_0\rangle=\langle \Phi\setminus S_1\rangle=\ldots=\langle \Phi\setminus S_i\rangle=\Phi.$ The second assertion trivially follows from the first. 
\end{proof}

\begin{rem}
The assumption that $S$ is contained in a proper subspace is necessary in Lemma \ref{lem-Projective}. For example, the root systems $B_2,C_2,G_2,B_3,C_3$ decompose as unions of two proper root subsystems.
\end{rem}

\begin{lem}\label{lem-ANCentralizers}
The centralizer of $AN$ in $G$ is trivial. 
\end{lem}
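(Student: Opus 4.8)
The plan is to use the structure theory of the Langlands/Iwasawa decomposition. Write $\fg = \Lie(G)$ and let $\fg = \fg_0 \oplus \bigoplus_{\ga \in \Phi} \fg_\ga$ be the restricted root space decomposition, where $\fg_0 = \Lie(A) \oplus \Lie(M)$ and $\Phi$ is the restricted root system. Then $\Lie(N) = \bigoplus_{\ga \in \Phi^+} \fg_\ga$. Suppose $g \in G$ centralizes $AN$. First I would show $g$ centralizes $A$, hence lies in $Z_G(A) = MA$ (since $A$ is a maximal $\BR$-split torus, up to the compact part $M$); in particular $\Ad(g)$ preserves each root space $\fg_\ga$ and acts trivially on $\fg_0$. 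Next, since $g$ centralizes $N$, $\Ad(g)$ fixes $\Lie(N)$ pointwise. Because $G$ has no compact factors, every simple factor is noncompact and hence $\Phi$ restricted to each simple factor is a nonempty irreducible root system; the positive root spaces $\{\fg_\ga : \ga \in \Phi^+\}$ together with $\fg_0$ generate all of $\fg$ as a Lie algebra (one recovers $\fg_{-\ga}$ from brackets of positive root spaces and $\fg_0$ via $[\fg_{\ga+\gb},\fg_{-\ga}] \subset \fg_\gb$ and the fact that $\Phi$ is spanned by the highest roots of its irreducible components — alternatively invoke Lemma \ref{lem-RootGen} applied to show the positive system together with $\fg_0$ is Lie-generating). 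Therefore $\Ad(g)$ is trivial on $\fg$, so $g$ is central in $G$; as $G$ is centre-free, $g = 1$.

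The step of passing from "centralizes $A$ and $N$" to "$\Ad(g) = 1$" is where I would be most careful. The cleanest route avoids delicate generation arguments: once we know $g \in MA$ and $\Ad(g)|_{\Lie(N)} = \mathrm{id}$, the same holds for the opposite unipotent radical after conjugating, but more directly one uses that $\Ad(g)$ commutes with $\Ad(A)$ and fixes each $\fg_\ga$ for $\ga > 0$ pointwise; then for $\ga > 0$ and the bracket $[\fg_{-\ga}, \fg_\ga] \ni H_\ga$ spanning a line in $\Lie(A)$, $\Ad(g)$-invariance of $\fg_{-\ga}$ plus triviality on $\Lie(A)$ and on $\fg_\ga$ forces, via the $\mathfrak{sl}_2$-triple structure attached to $\ga$, that $\Ad(g)$ is trivial on $\fg_{-\ga}$ as well. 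Since the $\fg_{\pm\ga}$ and $\fg_0$ span $\fg$, we conclude $\Ad(g) = \mathrm{id}$.

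An alternative, arguably slicker, argument: $Z_G(AN)$ is an algebraic subgroup normalized by $P = MAN$ (indeed $M$ normalizes $AN$ and commutes with any centralizer, while $AN$ normalizes $Z_G(AN)$ trivially), so $Z_G(AN) \cdot P$ is a subgroup of $G$; being strictly between $P$ and $G$ would contradict maximality of the parabolic $P$ unless $Z_G(AN) \subseteq P$. But an element of $P = MAN$ centralizing all of $AN$ must, by the semidirect structure $P = M \ltimes AN$ with $M$ reductive acting faithfully on $\Lie(AN)$ modulo centre, lie in the centre of $G$. The main obstacle in either approach is bookkeeping with the root-space action of $MA$; everything else is formal. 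Given that $G$ is centre-free, the conclusion $Z_G(AN) = \{1\}$ follows.
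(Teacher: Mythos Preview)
There are genuine gaps in each of your three approaches. In the first paragraph, the claim that the positive restricted root spaces together with $\frak g_0$ generate $\frak g$ as a Lie algebra is simply false: their span is already the subalgebra $\Lie(P)=\frak g_0\oplus\bigoplus_{\alpha>0}\frak g_\alpha$, so no amount of bracketing reaches a negative root space. (Your parenthetical justification uses $\frak g_{-\alpha}$ on the input side, which is circular, and Lemma~\ref{lem-RootGen} does not apply since $\Phi^-$ is not of the form $\Phi\cap V_0$ for a linear subspace $V_0$.) Relatedly, from $g\in MA$ you only get that $\Ad(g)$ is trivial on $\frak a$, not on all of $\frak g_0=\frak m\oplus\frak a$; you assert the latter without argument. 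Your ``slicker'' alternative is circular: the statement that $M$ acts faithfully on $\Lie(AN)$ modulo the centre of $G$ is precisely $C_G(AN)\subset Z(G)$, which is what you are proving. (Also, $P$ is the \emph{minimal} parabolic, not a maximal one.) Your $\mathfrak{sl}_2$-triple approach is the one that can be made to work, but it needs more than you give: when $\dim\frak g_\alpha>1$ a single triple $(X_\alpha,Y_\alpha,H_\alpha)$ pins down only one element $Y_\alpha\in\frak g_{-\alpha}$. You must vary $X$ over all of $\frak g_\alpha$, use that $[X,\theta X]\in\frak a$ for every such $X$ (so that $\Ad(g)$ does fix this bracket), use injectivity of $\mathrm{ad}(X)|_{\frak g_{-\alpha}}$ from $\mathfrak{sl}_2$-representation theory, and finally argue that $\frak n+\frak n^-$ generates $\frak g$ to handle $\frak m$.

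The paper sidesteps the multi-dimensional restricted root spaces by a different route. After reducing to $G$ simple and observing that $C_G(AN)$ is a normal subgroup of the reductive group $MA$, it suffices to show every \emph{semisimple} $\gamma\in C_G(AN)$ is trivial. Place such $\gamma$ in a maximal torus $T$ of $G$ (not merely in $A$) and pass to the full root system $\Phi$ of $T$ in $\frak g_{\BC}$, where every root space is one-dimensional. The roots appearing in $\frak m_{\BC}$ span a proper subspace $E$ of $X^*(T)\otimes\BR$, and every root in $\Phi\setminus E$ lies in $\frak n_{\BC}$ or its opposite; since $\gamma$ centralizes $N$ one gets $\xi(\gamma)=1$ for all $\xi\in\Phi\setminus E$. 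Now Lemma~\ref{lem-RootGen} applies legitimately (to this $\Phi$ and this linear subspace $E$) and yields $\xi(\gamma)=1$ for every root, hence $\Ad(\gamma)=1$ and $\gamma=1$.
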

\begin{proof}

The lemma easily reduces to the case where $G$ is simple, so let us assume that from now on. 
The centralizer of $A$ is $MA$. It follows that $C_G(AN)$ is a normal subgroup of the reductive group $MA$. Therefore it is enough to show that every semisimple element in  $C_G(AN)$ is trivial. Let $\gamma\in C_G(AN)$ be a semisimple element. Let $T$ be a maximal torus of $G$ containing $\gamma$. Let $\Phi$ be the set of roots of $T$ in $\frak g_\mathbb C$.  The roots naturally lie in the Euclidean space $X^*(T)\otimes \mathbb R$ and form a root system in  the classical sense. Let $E\subset X^*(T)\otimes \mathbb R$ be the proper subspace spanned by the roots in $\frak m_\mathbb C$. Then, $\Phi\setminus E$ is the set of roots in $\frak n_\mathbb C$ and their opposites. Since $\gamma$ commutes with $N$, the adjoint action must be trivial on $\frak n$ and therefore trivial on $\frak n_\mathbb C$. It follows that $\xi(\gamma)=1$ for every root $\xi\in \Phi\setminus E$. On the other hand, by Lemma \ref{lem-RootGen}, the roots in $\Phi\setminus E$ generate $\Phi$, so $\xi(\gamma)=1$ for every root in $\Phi$. This means that ${\rm Ad}(\gamma)$ acts trivially on $\frak g_\mathbb C$. Since $G$ is centre-free, $\gamma=1$. 
\end{proof}

Let $2^G$ denote the compact space of closed subsets of $G$ equipped with the Chabauty topology and consider the $G$ action by conjugation.
Let $\mathcal{F}(G)\subset 2^G$ be the set of finite subsets of $G$.

\begin{lem}\label{lem-StatioFin}
For any finite set $F\subset G\setminus\{1\}$, 
 the $\mu_G$-stationary measure 
 $$
 \lim_{n\to\infty}\frac{1}{n}\sum_{i=0}^{n-1}\mu_G^{(n)}*\gd_F
$$ 
 is the Dirac measure on the empty set.
\end{lem}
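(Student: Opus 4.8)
The plan is to reduce everything to a quantitative escape statement for conjugates of a single non‑trivial element, and to prove that using the Cartan decomposition of the random walk together with root‑space expansion. Since $2^G$ is compact in the Chabauty topology, it suffices to show that every weak‑$*$ limit $\mu_\infty$ of $\mu_n:=\frac1n\sum_{i=0}^{n-1}\nu_G^{(i)}*\delta_F$ in $\mathrm{Prob}(2^G)$ equals $\delta_\emptyset$; the whole content is the estimate
\[
\nu_G^{(i)}\bigl(\{\,g\in G:\ g\gamma g^{-1}\in C\,\}\bigr)\longrightarrow 0\quad(i\to\infty),\qquad \gamma\in G\setminus\{1\},\ C\subseteq G\ \text{compact}.
\]
Granting it, a union bound over the finitely many $\gamma\in F$ gives $\nu_G^{(i)}(\{g:gFg^{-1}\cap C\ne\emptyset\})\to 0$, hence the same for the Cesàro averages $\mu_n$. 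For a fixed compact $C$ choose a compact neighbourhood $C'\supseteq C$; then $V:=\{A\in 2^G:A\cap\mathrm{int}(C')\ne\emptyset\}$ is Chabauty‑open, contains $\{A:A\cap C\ne\emptyset\}$, and $\mu_n(V)\le\mu_n(\{A:A\cap C'\ne\emptyset\})\to 0$, so the portmanteau theorem gives $\mu_\infty(\{A:A\cap C\ne\emptyset\})\le\mu_\infty(V)=0$. Letting $C$ exhaust the $\sigma$‑compact group $G$ yields $\mu_\infty(\{A\ne\emptyset\})=0$, i.e.\ $\mu_\infty=\delta_\emptyset$. (Taking $C$ a small ball about $1$ also rules out a mass at the fixed point $\{1\}\in 2^G$.)

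For the estimate I would use that $\nu_G^{(i)}$ is bi‑$K$‑invariant, so a $\nu_G^{(i)}$‑element has the form $k\exp(H)k'$ with $k,k'$ independent Haar on $K$ and $H\in\overline{\mathfrak a^+}$ independent of them, distributed as the Cartan projection of $\nu_G^{(i)}$. Conjugation by $K$ acts by $\mathrm{Ad}$‑isometries, so $\|\mathrm{Ad}(g\gamma g^{-1})\|=\|\mathrm{Ad}(\exp(H)\gamma'\exp(-H))\|$ with $\gamma':=k'\gamma k'^{-1}$, and in the restricted‑root‑space decomposition $\mathfrak g=\bigoplus_\alpha\mathfrak g_\alpha$ the block of $\mathrm{Ad}(\exp(H)\gamma'\exp(-H))$ from $\mathfrak g_\alpha$ to $\mathfrak g_\beta$ is that of $\mathrm{Ad}(\gamma')$ scaled by $e^{(\beta-\alpha)(H)}$. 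If $\gamma'\notin P^-=MAN^-$ then $\mathrm{Ad}(\gamma')$ fails to preserve $\mathfrak p^-=\bigoplus_{\xi\le 0}\mathfrak g_\xi$, so it has a non‑zero block from some $\mathfrak g_\alpha$ ($\alpha\le 0$) to some $\mathfrak g_\beta$ ($\beta>0$); using $H\in\overline{\mathfrak a^+}$ and $\beta>0\ge\alpha$ one gets $(\beta-\alpha)(H)\ge\min_j\alpha_j(H)$, hence $\|\mathrm{Ad}(g\gamma g^{-1})\|\ge e^{\min_j\alpha_j(H)}\Psi(\gamma')$, where $\Psi$ is the continuous function on $G$ returning the norm of the largest such block and $\{\Psi=0\}=P^-$. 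As $\mathrm{Ad}\colon G\to\mathrm{GL}(\mathfrak g)$ is a proper embedding ($G$ is centre‑free), $L_C:=\sup_{g\in C}\|\mathrm{Ad}(g)\|<\infty$, so $g\gamma g^{-1}\in C$ forces $\Psi(\gamma')\le L_C e^{-\min_j\alpha_j(H)}$; thus for every $\delta>0$,
\[
\{\,g\gamma g^{-1}\in C\,\}\ \subseteq\ \{\,\Psi(k'\gamma k'^{-1})\le\delta\,\}\ \cup\ \{\,\min_j\alpha_j(H)<\log(L_C/\delta)\,\}.
\]
The first event has probability equal to the Haar measure of $\{k'\in K:\Psi(k'\gamma k'^{-1})\le\delta\}$, which tends to $0$ as $\delta\to0$ because $\{k':k'\gamma k'^{-1}\in P^-\}$ is Haar‑null (below); the second has probability tending to $0$ as $i\to\infty$ with $\delta,C$ fixed, because the Cartan projection of the walk escapes through regular directions. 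Sending $i\to\infty$, then $\delta\to0$, finishes it.

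It remains to note that $\{k'\in K:k'\gamma k'^{-1}\in P^-\}$ is Haar‑null for $\gamma\ne1$: equivalently, the set of minimal parabolics containing $\gamma$ is a proper Zariski‑closed subset of the flag variety $G/P$, hence null for the smooth $K$‑invariant measure. It is proper since $\bigcap_{g\in G}gP^-g^{-1}$ is a normal subgroup of $G$ contained in the amenable Levi $P\cap P^-=MA$, hence trivial (no simple factor of $G$ is amenable), so no $\gamma\ne1$ lies in every minimal parabolic. The one genuinely load‑bearing external input is the regular escape of the Cartan projection of $\nu_G^{(i)}$ — equivalently, that the Lyapunov vector of $\nu_G$ lies in the open Weyl chamber — which is precisely the property the expanding element $s$ in the definition of $\nu_G$ was chosen to supply \cite{GLM}; that is the step where I would anchor an exact reference, the rest being soft. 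I also note that the preceding lemmas give a partial alternative: a stationary limit is supported on sets of cardinality $\le|F|$, an ergodic component on a fixed cardinality $j$, and the Poisson boundary together with Lemmas~\ref{lem-RatRepProb} and~\ref{lem-ANCentralizers} (after embedding $j$‑element subsets of $G$ into a rational representation by power sums, $G$‑equivariantly for conjugation) force $j=1$ and the component $\delta_{\{1\}}$ — but this does not by itself eliminate a Dirac mass at $\{1\}$, for which the quantitative escape near the identity is still needed.
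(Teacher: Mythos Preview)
Your argument is correct, but it follows a genuinely different route from the paper's. The paper reduces immediately to a singleton $F=\{\gamma\}$ and to a simple factor; it then argues softly: if some stationary mass survives on $\overline{\gamma^G}$, restricting to a conjugacy class $\gamma_0^G$ produces a $\nu$-stationary probability on $G/G_{\gamma_0}$, whence Lemma~\ref{lem-StatioHS} forces $G_{\gamma_0}\supseteq AN$ and Lemma~\ref{lem-ANCentralizers} gives $\gamma_0=1$. The atom at $\{1\}$ is then killed by applying Theorem~\ref{thm:dSRS} to the discrete cyclic group $\langle\gamma\rangle$ (this is exactly the ``quantitative escape near the identity'' you flag as still needed in your final paragraph --- the paper supplies it via the Margulis function rather than via Cartan-projection escape). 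So your closing remark is essentially the paper's proof, with Theorem~\ref{thm:dSRS} filling the gap you left open.

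Your main argument instead proves the uniform escape estimate $\nu_G^{(i)}(\{g:g\gamma g^{-1}\in C\})\to 0$ directly, via the Cartan decomposition and root-space expansion. This is sound: right-$K$-invariance of $\nu_G^{(i)}$ genuinely makes the right $K$-factor Haar and independent of the Cartan projection; the block estimate and the reduction to the two events are correct; and the fixed-point set of $\gamma\ne 1$ on $G/P^-$ is indeed a proper subvariety (your normality argument for $\bigcap_g gP^-g^{-1}=\{1\}$ is fine). The one point to adjust is the attribution of your load-bearing input: what \cite{GLM} establishes for $\nu_G$ is the Margulis-function inequality~\eqref{eq-Marg}, not Lyapunov regularity per se. The regularity of the Lyapunov vector that you need follows from the general theory of random walks on reductive groups (Guivarc'h--Raugi, Goldsheid--Margulis, Benoist--Quint) once one checks that the semigroup generated by $KsK$ is Zariski dense --- which it is, since its Zariski closure is a subgroup containing both $K$ and the regular element $s$. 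What your approach buys is a single estimate that handles all $\gamma$ uniformly (including unipotent $\gamma$, where the paper must invoke Theorem~\ref{thm:dSRS} separately) and avoids the structure Lemmas~\ref{lem-StatioHS} and~\ref{lem-ANCentralizers}; what the paper's approach buys is that it stays entirely within the toolkit already assembled (the Margulis function and the $AN$-lemmas), without importing Lyapunov-spectrum simplicity from elsewhere.
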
 

\begin{proof}
It is enough to prove the assertion for $F=\{\gamma\}, \gamma\in G\setminus \{1\}$.  Moreover, by projecting to a factor we may suppose that $G$ is simple.
Let $\nu$ be the limit in question. 
If it is not the Dirac delta on the empty set then it gives us a $\mu$-stationary probability measure on the closure of the conjugacy class $\overline{\gamma^G}$. 
It follows from the Jordan--Chevalley decomposition together with the Dynkin--Kostant classification of nilpotent orbits (see \cite{CM}) that $\overline{\gamma^G}$ is a finite union of conjugacy classes. By restricting to one, say $\gc_0^G$, we obtain a finite positive $\mu$-stationary measure on $G/G_{\gamma_0}$.  By Lemma \ref{lem-StatioHS}, up to replacing $\gc_0$ by a conjugate, $G_{\gamma_0}$ contains $AN$. 
By Lemma \ref{lem-ANCentralizers} we get $\gamma_0=1$. Thus, we are left with showing that $\nu$ cannot have an atom at $\{1\}$. Now if $\nu(\{1\})=\gep>0$ then $1$ is a conjugate limit of $\gc$ and $\gc$ is unipotent. In that case by applying Theorem \ref{thm:dSRS} to the discrete cyclic group $\langle \gc\rangle$  we can choose  an identity neighborhood $U$ such that $\lim_{n\to\infty}\frac{1}{n}\sum_{i=0}^{n-1}\mu_G^{(n)}*\gd_{\{\gc\}}(U)<\gep$, a contradiction.
\end{proof}

 
\begin{cor}\label{cor:st-fi}
The only ergodic $\mu$-stationary probability measures on $\mathcal{F}(G)$ are $\gd_{\{1\}}$ and $\gd_\emptyset$.
\end{cor}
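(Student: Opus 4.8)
The plan is to reduce the statement to Lemma~\ref{lem-StatioFin} by peeling off the identity element $1\in G$, which is the one point fixed by every conjugation. First I would pass from the general bi-$K$-invariant measure $\nu$ to the specific measure $\nu_G$: by Remark~\ref{rem:same-stationary} these share the same stationary measures on any $G$-space, so it is enough to classify the ergodic $\nu_G$-stationary probability measures on $\mathcal{F}(G)$. Given such a measure $\mu$, consider the map $\gD\colon\mathcal{F}(G)\to\mathcal{F}(G)$, $\gD(F)=F\setminus\{1\}$, which is Borel and equivariant for the conjugation action (since $gFg^{-1}\setminus\{1\}=g(F\setminus\{1\})g^{-1}$). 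Then $\gl:=\gD_*\mu$ is again $\nu_G$-stationary and gives full mass to the Borel set $\mathcal{F}(G\setminus\{1\})=\{F\in\mathcal{F}(G):1\notin F\}$.

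The heart of the argument is the claim that any $\nu_G$-stationary probability measure $\gl$ giving full mass to $\mathcal{F}(G\setminus\{1\})$ is $\gd_{\emptyset}$. To see this I would write, using stationarity, $\gl=\frac1n\sum_{i=0}^{n-1}\nu_G^{(i)}*\gl=\int_{\mathcal{F}(G\setminus\{1\})}\mu_n^F\,d\gl(F)$ for every $n$, where $\mu_n^F:=\frac1n\sum_{i=0}^{n-1}\nu_G^{(i)}*\gd_F$ is the average from Lemma~\ref{lem-StatioFin}. By that lemma, for every fixed finite $F\subset G\setminus\{1\}$ the measures $\mu_n^F$ converge weak-$*$ to $\gd_{\emptyset}$ in the space of probability measures on the compact space $2^G$. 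Hence for a fixed $\phi\in C(2^G)$ the functions $F\mapsto\int\phi\,d\mu_n^F$ are bounded by $\|\phi\|_\infty$ and converge pointwise to $\phi(\emptyset)$, so dominated convergence gives $\int\phi\,d\gl=\phi(\emptyset)$ for all $\phi$, i.e. $\gl=\gd_{\emptyset}$.

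Applying this to $\gl=\gD_*\mu$ shows that $\mu$ gives full mass to $\gD^{-1}(\emptyset)=\{\emptyset,\{1\}\}$, so $\mu=t\,\gd_{\emptyset}+(1-t)\,\gd_{\{1\}}$ for some $t\in[0,1]$. Both $\emptyset$ and $\{1\}$ are conjugation-fixed points of $2^G$, so $\gd_{\emptyset}$ and $\gd_{\{1\}}$ are $\nu_G$-stationary; since $\mu$ is ergodic it is an extreme point of the convex set of $\nu_G$-stationary measures, which forces $t\in\{0,1\}$. Thus $\mu\in\{\gd_{\emptyset},\gd_{\{1\}}\}$, as claimed.

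The argument is quite short once Lemma~\ref{lem-StatioFin} is in hand, so I do not expect a serious obstacle; the points that need a little care are the following. One must isolate the role of the fixed point $1$: a finite set containing $1$ can never be conjugated off $1$, which is exactly why Lemma~\ref{lem-StatioFin} can only be applied after deleting it, hence the reduction by $\gD$. One must also check that Lemma~\ref{lem-StatioFin} genuinely yields weak-$*$ convergence of \emph{probability} measures on the compact space $2^G$ (and that $F\mapsto\int\phi\,d\mu_n^F$ is measurable, which in fact follows from continuity of the conjugation action on $2^G$), so that integrating this convergence against $\gl$ is legitimate. Finally, the initial passage from the general bi-$K$-invariant $\nu$ to $\nu_G$ relies on Remark~\ref{rem:same-stationary}.
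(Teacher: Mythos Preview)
Your argument is correct and rests on the same two ingredients as the paper's proof: Lemma~\ref{lem-StatioFin} and dominated convergence. The reduction step differs, however. You delete the identity via the equivariant Borel map $F\mapsto F\setminus\{1\}$ and then apply Lemma~\ref{lem-StatioFin} in its stated generality (for arbitrary finite $F\subset G\setminus\{1\}$) to conclude that any stationary measure on $\mathcal{F}(G\setminus\{1\})$ is $\delta_\emptyset$. The paper instead first shows that $\delta_1$ is the unique $\nu$-stationary probability measure on $G$ under the conjugation action (using only the singleton case of Lemma~\ref{lem-StatioFin} together with dominated convergence), and then reduces from $\mathcal{F}(G)$ to $G$ via the equivariant averaging map $F\mapsto \frac{1}{|F|}\sum_{f\in F}\delta_f$, which pushes a stationary measure on $\mathcal{F}(G)$ to one on $G$. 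Your route is slightly more direct since the key lemma is already formulated for finite sets; the paper's route has the minor side benefit of isolating the statement that $G$ itself carries no nontrivial conjugation-stationary probability measure.
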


\begin{proof}
In view of the dominated convergence theorem it follows from Lemma \ref{lem-StatioFin} that the only $\mu$-stationary probability measure on $G$ with respect to the action by conjugation  is $\gd_{\{1\}}$. Since the average of the Dirac measures on a finite set which is chosen randomly with respect to a $\mu$-stationary measure on $\mathcal{F}(G)$ produces such a measure, the corollary follows.
\end{proof}

We will also make use of the following beautiful result:

\begin{prop}[Bader--Shalom \cite{Bader-Shalom}]\label{prop:BS} 
Let $H=H_1\times H_2$, let $\mu_i$ be a probability measure on $H_i$, $i=1,2$ and let $\mu=\mu_1\times \mu_2$. Let $X$ be an $H$-space and $\nu$ a $\mu$-stationary measure on $X$. Then $\nu$ is $\mu_i$-stationary. 
\end{prop}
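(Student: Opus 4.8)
The plan is to invoke the universal property of the Poisson boundary, exactly in the spirit of Remark \ref{rem:same-stationary}, but now for the \emph{product} structure. Let $(B_i,\eta_i)$ denote the Poisson boundary of $(H_i,\nu_i)$. Since $\nu=\nu_1*\nu_2$ on $H=H_1\times H_2$ and the two factors commute, the Poisson boundary of $(H,\nu)$ is the product $(B,\eta)=(B_1\times B_2,\eta_1\otimes\eta_2)$, with $H_i$ acting on the $i$-th factor and trivially on the other. First I would record this identification: a bounded $\nu$-harmonic function on $H$ is, by the martingale convergence / Furstenberg correspondence, the Poisson transform of an $L^\infty(B,\eta)$ function, and conversely; the product decomposition of the boundary follows because the random walk driven by $\nu$ projects to the independent random walks driven by $\nu_1$ and $\nu_2$ on the two factors. (If one prefers to avoid citing the product formula for Poisson boundaries, the same argument can be run with $B$ replaced by \emph{any} $(H,\nu)$-boundary of the form $B_1\times B_2$ together with the fact that $B_i$ carries a $\nu_i$-stationary $H_i$-action that is trivial for $H_{3-i}$.)

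Given a $\nu$-stationary measure $\mu$ on the $H$-space $X$, the universal property produces an $H$-equivariant measurable map $\kappa\colon B\to\mathrm{Prob}(X)$ with $\mu=\int_B \kappa(b)\,d\eta(b)$, i.e. $\mu$ is the $\eta$-barycentre of $\kappa$. The next step is to disintegrate over the second factor: write $b=(b_1,b_2)$ and set, for fixed $b_2\in B_2$,
\[
  \kappa_{b_2}(b_1):=\kappa(b_1,b_2),\qquad
  \mu_{b_2}:=\int_{B_1}\kappa_{b_2}(b_1)\,d\eta_1(b_1)\in\mathrm{Prob}(X).
\]
Because $H_1$ acts trivially on $B_2$ and $\kappa$ is $H_1$-equivariant, each $\kappa_{b_2}$ is an $H_1$-equivariant map $B_1\to\mathrm{Prob}(X)$; since $(B_1,\eta_1)$ is $\nu_1$-stationary as an $H_1$-space, its barycentre $\mu_{b_2}$ is a $\nu_1$-stationary measure on $X$ for $\eta_2$-a.e.\ $b_2$. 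Finally, Fubini gives $\mu=\int_{B_2}\mu_{b_2}\,d\eta_2(b_2)$, so $\mu$ is an average of $\nu_1$-stationary measures. But the barycentre identity also shows directly that $\nu_1*\mu=\int_{B_2}(\nu_1*\mu_{b_2})\,d\eta_2(b_2)=\int_{B_2}\mu_{b_2}\,d\eta_2(b_2)=\mu$, since each $\mu_{b_2}$ is $\nu_1$-stationary; hence $\mu$ itself is $\nu_1$-stationary. The argument for $\nu_2$ is symmetric.

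The main obstacle, and the point that deserves care, is the measurability bookkeeping in the disintegration: one must check that $b_2\mapsto\mu_{b_2}$ is a well-defined measurable map into $\mathrm{Prob}(X)$ (so that the outer integral makes sense) and that the almost-everywhere $H_1$-equivariance of $\kappa$ descends to almost-everywhere $H_1$-equivariance of $\kappa_{b_2}$ for $\eta_2$-almost every $b_2$. Both are handled by a routine application of Fubini's theorem together with the fact that $H_1$ (being a Lie group) is second countable, so equivariance can be tested on a countable dense subgroup; I would state these as standard facts rather than belabour them. The one genuinely structural input is the identification of the Poisson boundary of the product with the product of the boundaries, which is classical (Furstenberg), and which is where the hypothesis $\nu=\nu_1*\nu_2$ with commuting factors is used.
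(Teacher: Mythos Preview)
Your argument is correct and nicely in the spirit of Remark~\ref{rem:same-stationary}, but it is not the route the paper takes. The paper's proof is a two-line citation: reduce to the ergodic case via the ergodic decomposition for stationary measures (\cite[Corollary~2.7]{Bader-Shalom}), and then invoke \cite[Lemma~3.1]{Bader-Shalom} directly for ergodic $\mu$. No boundary theory is unpacked in the paper itself.

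Your approach is more self-contained and arguably more conceptual: you exploit the product structure of the Poisson boundary $(B_1\times B_2,\eta_1\otimes\eta_2)$ and disintegrate the Furstenberg boundary map over $B_2$. This avoids the reduction to ergodic $\mu$ entirely and gives a uniform argument. The price is that you must import the product formula for Poisson boundaries and the existence of the boundary map $\kappa\colon B\to\mathrm{Prob}(X)$; the latter requires $X$ to be, say, compact metrizable (which holds for $\sub(G)$ but is not stated in the proposition). The paper's approach trades this structural input for a black-box citation to Bader--Shalom, whose Lemma~3.1 argues differently. Either route is valid; yours fits well with how the paper uses the Poisson boundary elsewhere, while the paper's keeps the proof to a citation.
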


\begin{proof}
In view of \cite[Corollary 2.7]{Bader-Shalom} it is enough to prove the result when $\nu$ is ergodic. For $\nu$ ergodic this is the statement of \cite[Lemma 3.1]{Bader-Shalom}.
\end{proof}

\begin{lem}\label{lem:discrete-trivial}
Let $H=H_1\times H_2$ be a product of centre-free semisimple Lie groups without compact factors and let $\nu$ be a $\mu_H$-stationary random discrete subgroup. If the projection to $H_1$ is almost surely discrete and the intersection with $H_2$ is almost surely trivial, then $\nu$ is supported on subgroups of $H_1$.
\end{lem}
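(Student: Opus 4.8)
The plan is to reduce the statement to the action on a space of finite subsets, where Corollary \ref{cor:st-fi} applies. Write $\gL$ for the random subgroup, and let $\pi_1, \pi_2$ be the projections $H \to H_1, H_2$. By Proposition \ref{prop:BS}, since $\nu_H = \nu_{H_1} * \nu_{H_2}$, the measure $\mu$ is both $\nu_{H_1}$-stationary and $\nu_{H_2}$-stationary. The key geometric observation is this: since $\gL \cap H_2$ is almost surely trivial and $\gL$ is discrete, the projection $\pi_1|_\gL \colon \gL \to H_1$ is injective almost surely, and for any fixed compact identity neighborhood $W_1 \subset H_1$ the set $\gL \cap (W_1 \times H_2)$ — which equals the graph over $\pi_1(\gL) \cap W_1$ of the (partially defined) homomorphism $\pi_1(\gL)\to H_2$ induced by $\pi_1|_\gL^{-1}$ — is finite almost surely once we also control the $H_2$-coordinate. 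To get that control, first I would pass to the closure: $\overline{\pi_2(\gL\cap(W_1\times H_2))}$ inside $H_2$. Actually the cleanest route is to use the structure theory of $\sub_d(H)$ directly.

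First I would establish that the intersection $\gL \cap H_2$ being trivial forces, for $\mu$-a.e.\ $\gL$, that $\gL$ is contained in a set of the form $\{(h_1, \phi(h_1)) : h_1 \in \pi_1(\gL)\}$ for a homomorphism $\phi \colon \pi_1(\gL) \to H_2$; here $\pi_1(\gL)$ is discrete by hypothesis. Now fix a compact identity neighborhood $V_2 \subset H_2$. Consider the map $\Psi \colon \sub_d(H) \to 2^{H_2}$ sending $\gL$ to the finite (by discreteness of $\pi_1(\gL)$ and injectivity of $\pi_1|_\gL$, intersected suitably) set... The subtlety is that $\phi(h_1)$ need not lie in $V_2$. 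To handle this I would instead argue by contradiction: suppose $\mu$ is not supported on subgroups of $H_1$, i.e.\ with positive probability $\pi_2(\gL) \ne \{1\}$. Using weak uniform discreteness (Theorem \ref{lem:WUD}) applied in $H$, and the fact that $\gL\cap H_2 = \{1\}$, I claim one can produce, via a measurable selection, a $\nu_{H_2}$-stationary random finite subset of $H_2 \setminus \{1\}$ of positive mass — contradicting Corollary \ref{cor:st-fi} (which gives only $\gd_{\{1\}}$ and $\gd_\emptyset$ as ergodic $\nu_{H_2}$-stationary measures on $\mathcal{F}(H_2)$, neither charging $H_2 \setminus \{1\}$).

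More concretely, the construction of the forbidden stationary measure on $\mathcal{F}(H_2)$ goes as follows. For $\gL \in \sub_d(H)$ and a fixed compact $W_1 \subset H_1$, set $F(\gL) := \pi_2\big(\gL \cap (W_1 \times H_2)\big) \setminus \{1\} \subset H_2$. Since $\pi_1(\gL)$ is discrete, $\pi_1(\gL) \cap W_1$ is finite, and since $\gL \cap H_2 = \{1\}$, the map $\pi_1|_\gL$ is injective, so $F(\gL)$ is a finite subset of $H_2$. The assignment $\gL \mapsto F(\gL)$ is equivariant for the $H_2$-conjugation actions (a $g_2 \in H_2$ acts on $H$ fixing $H_1$ pointwise and by conjugation on $H_2$; note $(1 \times g_2)(W_1 \times H_2)(1 \times g_2)^{-1} = W_1 \times H_2$), and it is Borel. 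Pushing $\mu$ forward by $F$ gives a $\nu_{H_2}$-stationary measure on $\mathcal{F}(H_2)$. If $\pi_2(\gL) \ne \{1\}$ with positive probability, then — enlarging $W_1$ to all of $H_1$, or rather running this for an exhausting sequence $W_1^{(k)} \uparrow H_1$ and using that $\gL \cap (H_1 \times H_2) = \gL$ — we find that $F(\gL) \ne \emptyset$ with positive probability, so this stationary measure charges $\{F : F \ne \emptyset\}$. But every nonempty $F \in F(\sub_d H)$ consists of non-identity elements, so the stationary measure charges $\{F \in \mathcal{F}(H_2) : F \ne \emptyset, 1 \notin F\}$ with positive mass; decomposing into ergodic components yields an ergodic $\nu_{H_2}$-stationary measure on $\mathcal{F}(H_2)$ different from $\gd_{\{1\}}$ and $\gd_\emptyset$, contradicting Corollary \ref{cor:st-fi}. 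Hence $\pi_2(\gL) = \{1\}$ almost surely, i.e.\ $\mu$ is supported on subgroups of $H_1$.

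The main obstacle I anticipate is the measurability and finiteness bookkeeping in the map $\gL \mapsto F(\gL)$: one must check that restricting to a compact $W_1$ genuinely yields a \emph{finite} set (this is where discreteness of the projection $\pi_1(\gL)$ and triviality of $\gL \cap H_2$ are both essential) and that letting $W_1$ exhaust $H_1$ is compatible with the stationarity argument — the honest fix is to not take $W_1 = H_1$ at once but to observe that if $\pi_2(\gL) \neq \{1\}$ on a positive-measure set then already for some fixed compact $W_1$ the set $F(\gL)$ is nonempty on a positive-measure set, by inner regularity. A secondary point is to confirm $\nu_{H_2}$-stationarity transfers correctly through Proposition \ref{prop:BS} and the pushforward, which is routine given that $F$ is $H_2$-equivariant.
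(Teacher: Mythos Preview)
Your proposal is correct and follows essentially the same argument as the paper: assume for contradiction that $\pi_2(\gL)\ne\{1\}$ with positive probability, choose a large compact $W_1\subset H_1$, and push $\mu$ forward via $\gL\mapsto \pi_2\big(\gL\cap(W_1\times H_2)\big)$ to obtain a $\nu_{H_2}$-stationary measure on $\mathcal{F}(H_2)$ that is not a mixture of $\gd_{\{1\}}$ and $\gd_\emptyset$, contradicting Corollary~\ref{cor:st-fi}. The paper's proof is the same construction (with $\pi_1^{-1}(B)=B\times H_2$ in place of your $W_1\times H_2$), stated more tersely; your explicit invocation of Proposition~\ref{prop:BS} for $\nu_{H_2}$-stationarity and your verification of finiteness via injectivity of $\pi_1|_\gL$ fill in details the paper leaves implicit.
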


\begin{proof}
Let us denote by $\pi_i$ the projection to $H_i$ for $i=1,2$.
Suppose by way of contradiction that $\nu$ is not supported on subgroups of $H_1$. Then for any sufficiently large ball $B$ in $H_1$ we have 
$$
 \nu\{\Lambda:\exists \ga\in\Lambda~\text{with}~\pi_1(\ga)\in B,\pi_2(\ga)\ne 1\}>0.
$$
Since $\pi_1(\Lambda)$ is discrete a.s., the intersection $\pi_1(\Lambda)\cap B$ is finite a.s. The intersection with $H_2$ is trivial a.s. so the preimage $\Lambda\cap \pi^{-1}(B)$ will be finite as well. Considering the map $\Lambda\mapsto \pi_2(\Lambda\cap\pi_1^{-1}(B))$, the measure $\nu$ induces a $\mu_{H_2}$-stationary finite positive measure on $\mathcal{F}(H_2)$ which is not a combination of $\gd_{\{1\}}$ and $\gd_\emptyset$. This contradicts Corollary \ref{cor:st-fi}.
\end{proof} 

\begin{rem}
Given a locally compact group $G$ and a probability measure $\nu$ on $\sub_d(G)$ it is possible to construct a probability $G$-space $(X,m)$ such that $\nu$ is the pushforward of $m$ via the stabilizer map $X\to \sub(G),~x\mapsto G_x$. This is proven in \cite[Theorem 2.6]{7s} under the assumption that $\nu$ is an IRS, but the proof applies to any $\nu\in\text{Prob}(\sub(G))$ which is supported on unimodular subgroups. Thus, the study of probability measures on $\sub_d(G)$ is equivalent to the study of discrete stabilizers of probability $G$-spaces. We will not make use of this fact.
\end{rem}

%


\section{A decomposition result for Invariant Random Subgroups}\label{sec:IRS}

Let $G$ be a connected centre-free semisimple Lie group without compact factors.
It follows from the Borel density theorem for IRS \cite{7s,GL} that for every ergodic discrete invariant random subgroup $\nu$ on $G$ there is a decomposition $G=G'\times G''$ such that the projection to $G'$ is discrete and Zariski dense and the projection to $G''$ is trivial almost surely.
The following results (see Theorem \ref{thm:decomposition} and Corollary \ref{cor:irreducibility} below) generalize to invariant random subgroups the classical decomposition-to-irreducible-factors theorem for lattices in semisimple groups (see \cite[Theorem 5.22]{Rag}): 

\begin{thm}\label{thm:decomposition}
Let $G=G_1\times\cdots\times G_n$ be a connected centre-free semisimple Lie group without compact factors and with simple factors $G_i,~i=1,\ldots,n$.
Let $\nu$ be an ergodic discrete invariant random subgroup in $G$. Then $G$ decomposes to a product of semisimple factors $G=H_1\times\ldots\times H_k$ with $1\le k\le n$, such that almost surely the projection of a random subgroup to each $H_i$ is discrete while the projection to each proper factor of $H_i$ is dense. 
\end{thm}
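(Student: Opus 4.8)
The plan is to prove Theorem \ref{thm:decomposition} by induction on $n$, building the decomposition $G = H_1 \times \cdots \times H_k$ one block at a time by examining the projections of a random subgroup to the individual simple factors $G_i$ and then grouping those factors into ``irreducibility classes.'' The base case $n=1$ is trivial, since then $G=G_1$ is simple and the only proper factor is $\{1\}$, to which the projection is vacuously dense. For the inductive step, fix an ergodic discrete IRS $\mu$ on $G=G_1\times\cdots\times G_n$ and consider the projection $\pi_i(\Lambda)$ to a single factor $G_i$. By the Borel density theorem for IRS \cite{7s,GL} applied to the ergodic IRS $(\pi_i)_*\mu$ on $G_i$ (note $(\pi_i)_*\mu$ is again an ergodic IRS), the projection $\pi_i(\Lambda)$ is almost surely either trivial or Zariski dense in $G_i$; in particular it is either almost surely trivial, or almost surely Zariski dense and hence (since $G_i$ is simple) its closure $\overline{\pi_i(\Lambda)}$ is either $G_i$ or discrete.

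The heart of the argument is to relate the factors to one another. First I would dispose of the factors on which the projection is trivial: let $G'$ be the product of the $G_i$ with $\pi_i(\Lambda)=\{1\}$ a.s., so by ergodicity $\Lambda \le G'' := \prod_{\text{other }i} G_i$ almost surely, and we may replace $G$ by $G''$; this is where an appeal to Lemma \ref{lem:discrete-trivial} (or a direct argument) lets us reduce. Now on the remaining factors $\pi_i(\Lambda)$ is a.s.\ Zariski dense. Define a relation on these factors: say $G_i \sim G_j$ if the projection of $\Lambda$ to $G_i\times G_j$ is a.s.\ \emph{not} discrete, equivalently (using Zariski density in each coordinate and the structure of closed subgroups of a product of simple groups, via Goursat/Margulis-type arguments) if there is a nontrivial correlation between the $G_i$- and $G_j$-components forced almost surely. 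I expect that the key structural input — and the main obstacle — is to show this relation is an equivalence relation, i.e.\ transitivity: if $\Lambda$ projects non-discretely to $G_i\times G_j$ and to $G_j\times G_k$, then it projects non-discretely to $G_i\times G_k$. This should follow from analyzing the closure $\overline{\pi_{\{i,j,k\}}(\Lambda)}$, which is a closed subgroup of $G_i\times G_j\times G_k$ surjecting onto each factor; by Goursat's lemma its connected component is a product over a partition of $\{i,j,k\}$ into ``linked'' blocks, and the non-discreteness hypotheses force $i,j$ in one block and $j,k$ in one block, hence all three together.

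Granting that $\sim$ is an equivalence relation, let $H_1,\ldots,H_k$ be the products of the $G_i$ over the equivalence classes. For each $\ell$, the projection $\pi_{H_\ell}(\Lambda)$ is discrete: its closure is a closed subgroup of $H_\ell$ which is Zariski dense in each simple factor, and if it were not discrete its identity component would be a proper nontrivial normal connected subgroup, i.e.\ a product of a proper subset of the factors of $H_\ell$ — but then splitting $H_\ell$ along the corresponding partition would contradict the maximality built into the equivalence class (some factor inside would be non-$\sim$-related to one outside while the overall projection remains entangled, contradicting how $\sim$ was defined). Conversely, for any proper subfactor $H'$ of $H_\ell$ (a product over a proper subset of the equivalence class), the projection $\pi_{H'}(\Lambda)$ must be dense: if it were discrete, then writing $H_\ell = H' \times H''$, the image $\pi_{H_\ell}(\Lambda)$ would be a discrete subgroup of $H'\times H''$ with discrete (hence closed) projection to $H'$, forcing $\Lambda$-components in $H'$ and $H''$ to be ``independent'' in the sense that some pair of factors straddling $H'$ and $H''$ is unrelated under $\sim$ — contradicting that the whole of $H_\ell$ is a single equivalence class. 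Ergodicity of $\mu$ is used throughout to upgrade ``positive probability'' statements to ``almost surely,'' and to ensure the equivalence classes, a priori random, are in fact deterministic. The final bookkeeping — checking that ``discrete projection to $H_\ell$'' and ``dense projection to every proper subfactor'' are exactly the two properties claimed, and that $1\le k\le n$ — is then routine.
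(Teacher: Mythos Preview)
There is a genuine gap: your relation $\sim$ is \emph{not} transitive, so it is not an equivalence relation, and the Goursat-type argument you sketch cannot succeed. A clean counterexample is the IRS $\mu=\mu_\Gamma$ with $\Gamma=\Gamma_1\times\Gamma_2$, where $\Gamma_1$ is an irreducible lattice in $G_1\times G_2$ and $\Gamma_2$ is a lattice in $G_3$. The projection to $G_1\times G_2$ is $\Gamma_1$, discrete, so $G_1\not\sim G_2$. But the projection to $G_1\times G_3$ is $\pi_1(\Gamma_1)\times\Gamma_2$, which is non-discrete since $\pi_1(\Gamma_1)$ is dense in $G_1$; hence $G_1\sim G_3$, and symmetrically $G_2\sim G_3$. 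So $G_1\sim G_3\sim G_2$ while $G_1\not\sim G_2$. The ``equivalently'' clause is also false: in this example there is no correlation between the $G_1$- and $G_3$-coordinates---they are literally independent---yet the pairwise projection is non-discrete simply because the $G_1$-coordinate is already dense on its own. Your argument for ``dense projection to proper subfactors'' then fails correspondingly: if you take the equivalence class generated by $\sim$ you get $H_\ell=G$, but the projection to the proper subfactor $G_3$ is the discrete group $\Gamma_2$.

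The underlying issue is that pairwise non-discreteness of projections does not encode the block structure; non-discreteness of $\pi_{ij}(\Lambda)$ can come either from genuine entanglement between $G_i$ and $G_j$ or from $\pi_i(\Lambda)$ already being dense for reasons involving a \emph{third} factor. The paper avoids this by arguing top-down rather than pairwise: it shows (via Lemma~\ref{lem:dense-discrete}, which rests on Lemma~\ref{lem:discrete-trivial}) that whenever the projection to some proper factor $H$ is discrete, the projection to the complementary factor $H'$ is automatically discrete as well---the key point being that a discrete group projecting densely to $H_2$ must intersect $H_2$ trivially, which Lemma~\ref{lem:dense-discrete} rules out. One then recurses on $H$ and $H'$ separately. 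If you want to rescue a bottom-up approach, you would need a different relation (for instance, one phrased in terms of intersections rather than projections), but the argument would still need a nontrivial input equivalent to Lemma~\ref{lem:dense-discrete}.
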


The proof relies on the following:

\begin{lem}\label{lem:dense-discrete}
Let $H=H_1\times H_2$ be a product of centre-free semisimple Lie groups without compact factors. Let $\nu$ be an ergodic discrete IRS in $H$ which projects discretely to $H_1$ and Zariski densely to $H_2$. Then the intersection of a random subgroup with $H_2$ is nontrivial almost surely. 
\end{lem}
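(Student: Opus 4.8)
The plan is to argue by contradiction: assume that the intersection $\gL\cap H_2$ is trivial almost surely, and derive a contradiction with the hypothesis that the projection to $H_2$ is Zariski dense. The key observation is that if $\mu$ projects discretely to $H_1$ and meets $H_2$ trivially almost surely, then by Lemma \ref{lem:discrete-trivial} the measure $\mu$ is actually supported on subgroups of $H_1$ — that is, a random subgroup $\gL$ satisfies $\pi_2(\gL)=\{1\}$ almost surely. But this directly contradicts the assumption that $\pi_2(\gL)$ is Zariski dense in $H_2$ (a nontrivial group), since $H_2$ has no compact factors and is in particular infinite, so $\{1\}$ is not Zariski dense.

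There is a small gap in applying Lemma \ref{lem:discrete-trivial} directly: that lemma is stated for $\nu_H$-stationary random discrete subgroups, whereas here $\mu$ is an IRS, i.e.\ conjugation-invariant. However, an invariant random subgroup is in particular $\nu_H$-stationary for any probability measure $\nu_H$ on $H$, so the hypothesis of Lemma \ref{lem:discrete-trivial} is met once we observe that $\nu_H = \nu_{H_1}*\nu_{H_2}$ has the required product structure (see \S \ref{sec:nu_G}). Thus the chain of implications is: $\mu$ invariant $\Rightarrow$ $\mu$ is $\nu_H$-stationary; $\mu$ projects discretely to $H_1$ and intersects $H_2$ trivially a.s.\ $\Rightarrow$ (by Lemma \ref{lem:discrete-trivial}) $\mu$ is supported on subgroups of $H_1$ $\Rightarrow$ $\pi_2(\gL)=\{1\}$ a.s., contradicting Zariski density of $\pi_2(\gL)$ in the non-trivial group $H_2$.

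The only step that requires any care is the verification that the hypotheses of Lemma \ref{lem:discrete-trivial} genuinely apply — in particular that ``intersection with $H_2$ is almost surely trivial'' (the negation of the conclusion we want) is exactly the hypothesis that lemma needs, and that ``projection to $H_1$ is almost surely discrete'' is part of the data of Lemma \ref{lem:dense-discrete}. There is no serious obstacle here; the content has been pushed into Corollary \ref{cor:st-fi} and Lemma \ref{lem:discrete-trivial}, and the present lemma is essentially a bookkeeping consequence. One should also note that ergodicity of $\mu$ is not strictly needed for this argument, though it is harmless to assume it; the real use of ergodicity in this circle of ideas is in the decomposition theorem (Theorem \ref{thm:decomposition}) that invokes this lemma.
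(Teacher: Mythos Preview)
Your proposal is correct and is exactly the paper's approach: the paper's proof is the single sentence ``The lemma follows immediately from Lemma \ref{lem:discrete-trivial},'' and you have simply (and correctly) unpacked this, including the observation that an IRS is in particular $\nu_H$-stationary so that Lemma \ref{lem:discrete-trivial} applies. Your remark that ergodicity is not essential here is also accurate.
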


\begin{proof}[Proof of Lemma \ref{lem:dense-discrete}]
The lemma follows immediately from Lemma \ref{lem:discrete-trivial}.
\end{proof}

\begin{rem}
This lemma could also be proved directly (without referring to the special properties of the measure $\mu_H$) by applying disintegration of measures with respect to the factor map $(\text{Sub}_H,\nu)\to(\sub_{H_1},\pi_*\nu)$. Indeed, if the intersection with $H_2$ is trivial almost surely, then since the projection to $H_2$ is Zariski dense, the $H_2$ action on a generic fibre is free. By
constructing a measurable section for the $H_2$ action on a fibre one may pull the fibre measure to a left $H_2$-invariant probability measure on $H_2$. This is absurd since $H_2$ is not compact.

\end{rem}

\begin{proof}[Proof of Theorem \ref{thm:decomposition}]
Let $G=G_1\times\cdots\times G_n$ and $\nu$ be as in the statement of the theorem. 
We may suppose that a random subgroup is almost surely Zariski dense.
If the projection of a random subgroup to every proper semisimple factor of $G$ is dense almost surely then $\nu$ is irreducible. Otherwise there is a proper decomposition $G=H\times H'$ such that the projection to $H$ is almost surely discrete. We claim that the projection to $H'$ is also discrete and hence we can deduce the result by induction on the number of simple factors. Suppose by way of contradiction that the projection to $H'$ is non-discrete and let $H_2\lhd H'$ be the connected component of its closure. Then $H'$ decomposes as $H'=H''\times H_2$ with $H_2$ nontrivial. It follows that $\nu$ projects discretely to $H_1:=H\times H''$ and densely to $H_2$. Since the intersection of every subgroup of $G$ with $H_2$ is normalized by the projection of that subgroup to $H_2$, a discrete subgroup of $G$ that projects densely to $H_2$ must intersect it trivially. Therefore a $\nu$-random subgroup intersects $H_2$ trivially almost surely. A contradiction to Lemma \ref{lem:dense-discrete}. 
\end{proof}

It follows from Theorem \ref{thm:decomposition} that an ergodic discrete IRS $\nu$ in $G=G_1\times\cdots\times G_n$ is associated with two other IRS $\ti\nu$ and $\overline\nu$ given (respectively) by its projection to and the intersection with the semisimple factors $H_j$. 
Both $\ti\nu$ and $\overline\nu$ are products, $\ti\nu=\prod \ti\nu_j,~\overline\nu=\prod\overline\nu_j$, where $\ti\nu_j$ and $\overline\nu_j$ are discrete IRS on $H_j$.
Obviously $\ti\nu_j$ is irreducible. We claim that also $\overline\nu_j$ is irreducible and nontrivial. If $\nu$ is irreducible then $\nu=\ti\nu=\overline\nu$ and there is nothing to prove. Otherwise, it follows from Lemma \ref{lem:dense-discrete} that $\overline\nu_j$ is nontrivial for every $j$ for which $\ti\nu_j$ is nontrivial.
Fix $j\in\{1,\ldots,k\}$. If $H_j$ is simple then there is nothing further to prove. Suppose that $H_j$ is not simple. Let $F_j\lhd H_j$ be a proper nontrivial semisimple factor of $H_j$.
Let $\gD$ be a $\nu$ random subgroup and let $\ti\gD_j$ and $\overline\gD_j$ be the corresponding random subgroups in $H_j$, that is the projection and the intersection with $H_j$.
We need to show that $F_j\overline\gD_j$ is dense in $H_j$.
Note that $\overline\gD_j\cap F_j$ must be trivial, since it is both discrete and normalized by the projection of $\ti\gD_j$ to $F_j$, which is dense. 
Consider the identity connected component of the closure of $\overline\gD_j F_j$ and denote it by $N_j$. We need to show that $N_j=H_j$ and indeed, if $N_j$ is a proper subgroup then again we deduce that $\overline\gD_j$ is not contained in $N_j$. However, by construction the projection of $\overline\gD_j$ to $H_j/N_j$ is discrete. This is absurd since this nontrivial discrete group is normalized by the dense projection of $\ti \gD_j$ to $H_j/N_j$.
Thus we have established the following:

\begin{cor}\label{cor:irreducibility}
Let $G=G_1\times\cdots\times G_n$ be as in Theorem \ref{thm:decomposition}.
Let $\nu$ be an ergodic Zariski dense discrete invariant random subgroup in $G$. Let $G=H_1\times\ldots\times H_k$ be the decomposition of $G$ to $\nu$-irreducible factors. For every $j$ let $\overline\nu_j$ be the IRS in $H_j$ obtain by the intersection of the $\nu$-random subgroup with $H_j$.
Then $\overline\nu_j$ is a non-trivial irreducible discrete IRS of $H_j$. 
\end{cor}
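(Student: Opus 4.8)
The plan is to argue one $\mu$-irreducible factor $H_j$ at a time. Fix $j\in\{1,\dots,k\}$, let $\Delta$ be a $\mu$-random subgroup of $G$, and write $\tilde\Delta_j=\pi_{H_j}(\Delta)$ for its projection to $H_j$ and $\overline\Delta_j=\Delta\cap H_j$ for its intersection with $H_j$, with laws $\tilde\mu_j$ and $\overline\mu_j$. By Theorem~\ref{thm:decomposition} and the discussion preceding the corollary we already know that $\overline\mu_j$ is a discrete IRS on $H_j$, and that $\tilde\mu_j$ is an \emph{irreducible} discrete IRS on $H_j$: the subgroup $\tilde\Delta_j$ is discrete in $H_j$, projects densely to every proper factor of $H_j$, and is in particular Zariski dense in $H_j$. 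It thus remains to prove that $\overline\mu_j$ is nontrivial and that it is irreducible.

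\emph{Nontriviality.} If $\mu$ is already irreducible then $k=1$, $H_1=G$, and $\overline\mu_1=\mu\neq\delta_{\{1\}}$ since $\mu$ is Zariski dense. Otherwise put $H_j'=\prod_{i\neq j}H_i$; then $\pi_{H_j'}(\Delta)$ is discrete, being contained in the product of the discrete projections $\pi_{H_i}(\Delta)$, $i\neq j$, while $\pi_{H_j}(\Delta)$ is Zariski dense because $\Delta$ is. Applying Lemma~\ref{lem:dense-discrete} to $G=H_j'\times H_j$ with $(H_1,H_2)=(H_j',H_j)$ gives $\overline\Delta_j=\Delta\cap H_j\neq\{1\}$ almost surely.

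\emph{Irreducibility.} If $H_j$ is simple there is nothing to prove, so fix a proper nontrivial factor $F_j\lhd H_j$, write $H_j=F_j\times F_j'$, and aim to show that $\overline\Delta_j F_j$ is dense in $H_j$, equivalently that $\pi_{F_j'}(\overline\Delta_j)$ is dense. First, $\overline\Delta_j\cap F_j=\{1\}$: it is discrete and normalized by $\pi_{F_j}(\tilde\Delta_j)$, which is dense in the connected centre-free group $F_j$, so it is normal in $F_j$ and hence trivial. Let $N_j$ be the identity component of the closure of $\overline\Delta_j F_j$; its normalizer is a closed subgroup of $H_j$ containing both $F_j$ and $\tilde\Delta_j$, hence projecting densely to $H_j/F_j$, so it equals $H_j$ --- that is, $N_j$ is a normal sub-product of $H_j$ containing $F_j$. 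I claim $N_j=H_j$. If not, write $H_j=N_j\times C$ with $C$ a nontrivial, hence proper, factor of $H_j$, so that $\pi_C(\Delta)$ is Zariski dense in $C$. By ergodicity of $\mu$, either $\overline\Delta_j\subseteq N_j$ almost surely or $\overline\Delta_j\not\subseteq N_j$ almost surely. In the first case $\Delta\cap C=\{1\}$, and moreover $\pi_{H_j'\times N_j}(\Delta)$ is discrete: it surjects onto the discrete group $\pi_{H_j'}(\Delta)$ with kernel $\overline\Delta_j$, which is discrete and sits in the complementary factor $N_j$, so a small product neighbourhood of the identity meets $\pi_{H_j'\times N_j}(\Delta)$ only in the identity. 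Lemma~\ref{lem:dense-discrete} applied to $G=(H_j'\times N_j)\times C$ then forces $\Delta\cap C\neq\{1\}$, a contradiction. Hence $\overline\Delta_j\not\subseteq N_j$ almost surely; but then the image of $\overline\Delta_j$ in $H_j/N_j$ is a nontrivial discrete subgroup --- discrete by the very definition of $N_j$ --- normalized by the dense image of $\tilde\Delta_j$, hence normal in the connected centre-free group $H_j/N_j$ and therefore trivial, which is absurd. Therefore $N_j=H_j$, so $\overline\Delta_j F_j$ is dense in $H_j$; since $F_j$ was an arbitrary proper nontrivial factor, $\overline\mu_j$ is irreducible.

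The step I expect to be the main obstacle is the irreducibility argument, and within it the exclusion of the possibility $\overline\Delta_j\subseteq N_j$: this is where one must both recognize $N_j$ as a genuine normal factor of $H_j$ (using that $\tilde\Delta_j$ is Zariski dense and projects densely to proper factors) and re-enter the rigidity input through Lemma~\ref{lem:dense-discrete}, hence ultimately Corollary~\ref{cor:st-fi} --- recapitulating, inside $H_j$, the inductive core of the proof of Theorem~\ref{thm:decomposition}. The remaining ingredients are routine: the correspondence between connected normal subgroups and factors in semisimple groups, the absence of nontrivial discrete normal subgroups in connected centre-free semisimple Lie groups, and elementary bookkeeping with the product structure $G=\prod_jH_j$.
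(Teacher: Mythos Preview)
Your proof is correct and follows the same route as the paper's: nontriviality via Lemma~\ref{lem:dense-discrete}, then the analysis of $N_j$ culminating in the contradiction that the image of $\overline\Delta_j$ in $H_j/N_j$ would be a nontrivial discrete normal subgroup of a connected centre-free group. Where the paper writes tersely ``again we deduce that $\overline\Delta_j$ is not contained in $N_j$'', you unpack this as a second application of Lemma~\ref{lem:dense-discrete} to the splitting $G=(H_j'\times N_j)\times C$, and you also make explicit why $N_j$ is normal in $H_j$; these are welcome clarifications of the same argument (the only point left implicit in both accounts is that, by ergodicity, the normal subgroup $N_j$ is almost surely a fixed sub-product of $H_j$, which is needed to make that second application legitimate).
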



\section{Stationary measures of rank one groups}

The celebrated factor theorem of Nevo and Zimmer \cite{NZ} (Theorem \ref{thm-NZ}) is concerned with stationary measures of higher rank semisimple Lie groups and as shown in \cite[Theorem B]{NZ99} the analog result is not true for simple Lie groups of rank one. In this section we establish the following weak version of Nevo--Zimmer factor theorem for rank one groups.

\begin{thm}\label{lem-Discrete}\label{prop:rank-one}
Let $G$ be a centre-free simple rank-one real Lie group with a smooth probability measure $\mu$. Let $(X,\nu)$ be a non-trivial ergodic probability $\mu$-stationary non-essentially-free $G$-system. Then either $(X,\nu)$ has discrete Zariski dense stabilizers almost surely or there exists a $G$-equivariant map $(X,\nu)\to (G/P,\nu_P)$ where $P$ is a minimal parabolic subgroup of $G$. 
\end{thm}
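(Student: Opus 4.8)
The plan is to analyze the stabilizers $G_x$, $x\in X$, through the connected algebraic subgroup $W_x$ defined as the identity component of the Zariski closure of $G_x$ in $G$. The assignment $x\mapsto W_x$ is Borel and $G$-equivariant ($W_{gx}=gW_xg^{-1}$), so by ergodicity the invariant events $\{W_x=G\}$ and $\{W_x=\{1\}\}$ have measure $0$ or $1$; since the action is non-free and ergodic it also has $G_x\ne\{1\}$ almost surely. If $W_x=G$ a.s.\ then $G_x$ is Zariski dense; since $G_x$, hence by Zariski density $G$ itself, normalizes the connected group $G_x^{0}$, simplicity of $G$ forces $G_x^{0}\in\{\{1\},G\}$, and $G_x^{0}=G$ would give a global fixed point, contradicting non-triviality; so $G_x$ is discrete and Zariski dense, which is the first alternative. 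If $W_x=\{1\}$ a.s.\ then $G_x$ is finite a.s., so $x\mapsto G_x$ pushes $\mu$ forward to an ergodic $\nu$-stationary probability measure on $\mathcal{F}(G)$; by Corollary \ref{cor:st-fi} --- whose conclusion persists for smooth $\nu$, as a $\nu$-stationary probability measure on $G$ under conjugation disintegrates over the Poisson boundary $(G/P,\mu_P)$ into an $AN$-invariant measure which, by Lemma \ref{lem-RatRepProb} applied to the conjugation action of $AN$ on $\mathrm{End}(\mathfrak g)$ (with $G\hookrightarrow\GL(\mathfrak g)$ via $\Ad$), is a point mass at an $AN$-fixed element of $G$, hence at $1$ by Lemma \ref{lem-ANCentralizers} --- this measure is $\delta_{\{1\}}$ or $\delta_\emptyset$, so (as $1\in G_x$) it equals $\delta_{\{1\}}$, i.e.\ $G_x=\{1\}$ a.s., contradicting non-freeness. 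Thus only the case $\{1\}\ne W_x\ne G$ a.s.\ remains, and there we must produce the map to $G/P$.

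In that case $W_x$ is a.s.\ a nontrivial proper connected algebraic subgroup, of constant dimension $d$ by ergodicity. Composing the Borel $G$-equivariant map $x\mapsto\mathfrak w_x:=\Lie(W_x)\in\Gr_d(\mathfrak g)$ with the Pl\"{u}cker embedding $\Gr_d(\mathfrak g)\hookrightarrow\mathbb P(\Lambda^{d}\mathfrak g)$ (for the representation $\Lambda^{d}\Ad$), we push $\mu$ to a $\nu$-stationary probability measure $\bar\mu$ on $\mathbb P(\Lambda^{d}\mathfrak g)$. Since $(G/P,\mu_P)$ is the Poisson boundary of $(G,\nu)$, we may write $\bar\mu=\int_{G/P}\kappa(gP)\,d\mu_P(gP)$ for a Borel $G$-map $\kappa\colon G/P\to\mathcal{P}(\mathbb P(\Lambda^{d}\mathfrak g))$. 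The measure $\kappa(P)$ is $AN$-invariant, so by Lemma \ref{lem-ProjActions} it is supported on the closed set of $AN$-fixed lines; by equivariance $\kappa(gP)$ is supported on the $gANg^{-1}$-fixed lines, whence $\bar\mu$ is carried by $\bigcup_{g\in G}\mathrm{Fix}(gANg^{-1})$. Pulling back along $x\mapsto[\Lambda^{d}\mathfrak w_x]$, for almost every $x$ the subspace $\mathfrak w_x$ is $\Ad(gANg^{-1})$-invariant for some $g=g(x)$, i.e.\ $N_G(W_x)\supseteq gANg^{-1}$. As $G$ is simple and $W_x\notin\{\{1\},G\}$, the group $N_G(W_x)$ is proper, so $L_x:=N_G(W_x)^{0}$ is a proper connected subgroup of $G$ containing a conjugate of $AN$.

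It remains to distill a single boundary point from $L_x$. Using the restricted-root decomposition $\mathfrak g=\mathfrak n^{-}\oplus(\mathfrak m\oplus\mathfrak a)\oplus\mathfrak n$ of the rank-one group together with the standard fact that any subalgebra of $\mathfrak g$ containing $\mathfrak a\oplus\mathfrak n$ and meeting $\mathfrak n^{-}$ nontrivially equals $\mathfrak g$, and noting that $\Lie(L_x)$ is a proper $\mathrm{ad}(\mathfrak a)$-stable subalgebra containing a conjugate of $\mathfrak a\oplus\mathfrak n$, one gets $\Lie(L_x)=\Ad(g)(\mathfrak m'\oplus\mathfrak a\oplus\mathfrak n)$ for a subalgebra $\mathfrak m'\subseteq\mathfrak m$. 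Hence $L_x\subseteq gPg^{-1}=\mathrm{Stab}_G(g\cdot eP)$, so $L_x$ fixes $g\cdot eP\in G/P$; and $L_x\supseteq gNg^{-1}$, which fixes only $g\cdot eP$ because the horospherical subgroup $N$ acts simply transitively on the complement of its unique fixed point in $G/P$. So $L_x$ has a unique fixed point $\zeta(x)$ in $G/P$. The map $\zeta$ is Borel and, since $N_G(W_{gx})^{0}=gN_G(W_x)^{0}g^{-1}$, it is $G$-equivariant; hence $\zeta_*\mu$ is a $\nu$-stationary probability measure on $G/P$, which by uniqueness of the stationary measure on the Poisson boundary equals $\mu_P$. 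Thus $\zeta\colon(X,\mu)\to(G/P,\mu_P)$ is the desired $G$-equivariant map.

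The hard part, and the only essential use of rank one, is this final step: in higher rank the analogous argument would merely place $L_x$ inside a conjugate of a possibly larger parabolic $Q$ and produce a map to $G/Q$ --- the Nevo--Zimmer phenomenon --- whereas in rank one there is a single proper parabolic class, so $L_x$ pins down exactly one point of $G/P$. Within the argument the delicate points are verifying, via the Poisson-boundary disintegration and Lemma \ref{lem-ProjActions}, that $N_G(W_x)$ genuinely contains a conjugate of $AN$, and the structural claim that a proper connected subgroup of $G$ containing a conjugate of $AN$ lies inside a conjugate of $P$.
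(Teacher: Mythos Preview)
Your proof is correct and follows essentially the same strategy as the paper: Pl\"ucker-embed the Lie algebra of (the Zariski closure of) the stabilizer into a projective space, use Lemma~\ref{lem-ProjActions} to see that a conjugate of $AN$ normalizes that Lie algebra, exclude finite stabilizers via Corollary~\ref{cor:st-fi}, and apply Lemma~\ref{lem-ANParab} to land inside a conjugate of $P$. The paper organizes the case split slightly differently (non-discrete versus non-Zariski-dense, rather than by $\dim W_x$) and in the final step appeals to Lemma~\ref{lem-Projective} to conclude that the image measure is supported on a \emph{single} orbit $G/H\subset\mathbb P^1(\bigwedge\nolimits^{d}\frak g)$ with $AN\subset H\subset P$, obtaining the factor $X\to G/H\to G/P$ directly rather than through your unique-fixed-point argument for $L_x=N_G(W_x)^0$.
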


\begin{lem}\label{lem-Projective}
Let $G$ be a simple real Lie group. Let $V$ be a real linear representation of $G$ without fixed points. Fix a minimal parabolic subgroup $P\subset G$ with Langlands decomposition $P=MAN$. Every ergodic $\mu$-stationary measure $\nu$ on the projective space $\mathbb P^1(V)$ is supported on a single $G$-orbit $G[v]$ where the stabilizer of $[v]$ is a proper subgroup of $G$ that contains $AN$. 
\end{lem}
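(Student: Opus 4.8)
The plan is to combine the stationarity of $\mu$ with the structure theory already developed in Lemmas \ref{lem-ProjActions}, \ref{lem-RatRepProb} and \ref{lem-RootGen}. First I would use the Poisson boundary: since $\nu$ is bi-$K$-invariant (or, by Remark \ref{rem:same-stationary}, any smooth measure with the same Poisson boundary), $(G/P,\mu_P)$ is the Poisson boundary of $(G,\nu)$, so the $\nu$-stationary measure $\mu$ on $\mathbb P^1(V)$ arises as the barycentre $\mu=\int_{G/P}\kappa(gP)\,d\mu_P(gP)$ of a $G$-equivariant measurable map $\kappa\colon G/P\to\mathcal P(\mathbb P^1(V))$. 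After conjugating $P$ we may assume $\kappa(P)$ is a $P$-invariant, hence $AN$-invariant, probability measure on $\mathbb P^1(V)$; pick an ergodic $AN$-invariant component $\mu_0$ of $\kappa(P)$.

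Next I would apply Lemma \ref{lem-ProjActions} to $\mu_0$: viewing $V$ as a rational representation of $AN$ (it is rational since $G\to\GL(V)$ is a rational representation of the algebraic group $G$ and $AN$ is an algebraic subgroup), that lemma gives $\mu_0=\delta_{[v_0]}$ for a single line $[v_0]$ fixed by $AN$. Then $\kappa(P)$ is supported on the (finite) set of $AN$-fixed lines in $\mathbb P^1(V)$; since $\kappa(P)$ is $P$-invariant and $M$ permutes the $AN$-fixed lines, $\kappa(P)$ is a finite convex combination of Dirac masses on $AN$-fixed lines lying in a single $M$-orbit. Unwinding the barycentre formula, $\mu$ is then supported on $\bigcup_{g\in G}g\cdot\{AN\text{-fixed lines}\}$, which is a finite union of $G$-orbits $G[v_1],\dots,G[v_m]$. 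By ergodicity of $\mu$ under the $G$-action, $\mu$ is concentrated on a single orbit $G[v]$ with $[v]$ an $AN$-fixed line.

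It remains to identify $\Stab_G([v])$. It contains $AN$ by construction, so I must rule out $\Stab_G([v])=G$, i.e. that $[v]$ is $G$-invariant. Here is where Lemma \ref{lem-RootGen} enters, in the same way it is used in Lemma \ref{lem-ANCentralizers}: choose a maximal torus $T\supset A$, let $\Phi$ be the root system of $T$ on $\fg_{\BC}$, and let $E$ be the subspace spanned by the roots of $\fm_{\BC}$, so that $\Phi\setminus E$ consists of the roots of $\fn$ together with their opposites. If $[v]$ were $G$-fixed then the derived action of $\fg$ on the line $\BC v$ would be by a character of $\fg$, hence trivial since $\fg$ is semisimple, so $V$ would have a nonzero $G$-fixed vector — contradicting the hypothesis. (Alternatively: $[v]$ being $AN$-fixed forces the $\fn$-weight of $v$ to vanish on all roots in $\Phi\setminus E$, hence by Lemma \ref{lem-RootGen}(1) on all of $\Phi$, so $v$ is a weight vector for a trivial-on-$\fg$ character; since $V$ has no $G$-fixed point this is impossible unless $[v]$ is moved by $G$.) Thus $\Stab_G([v])$ is a proper subgroup containing $AN$, as claimed.

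The main obstacle I expect is the passage from "each ergodic $AN$-component of $\kappa(P)$ is a Dirac mass on an $AN$-fixed line" to "$\mu$ lives on finitely many $G$-orbits and hence, by ergodicity, on one": one must check that the set of $AN$-fixed lines in $\mathbb P^1(V)$ is finite (it is, being cut out inside $\mathbb P^1(V)$ by the vanishing of the nilpotent part of $\fn$ and consisting of finitely many $A$-weight lines, exactly as in the basis construction in the proof of Lemma \ref{lem-ProjActions}) and that the $G$-saturation of this finite set is a finite union of orbits on which $G$ acts with the barycentre formula respecting ergodic decomposition. Everything else is a direct application of results already established above.
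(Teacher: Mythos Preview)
Your overall strategy matches the paper's: use the Poisson boundary to write $\mu$ as a barycentre of conditional measures $\kappa(gP)$, apply Lemma \ref{lem-ProjActions} to see that each $\kappa(gP)$ is supported on lines fixed by a conjugate of $AN$, then invoke ergodicity to land on a single $G$-orbit, and finally use simplicity of $G$ to rule out the full stabilizer. The properness argument you give (a $G$-fixed line is pointwise fixed since $\frak g$ has no nontrivial characters) is exactly the paper's.

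The one genuine gap is in your ``single orbit'' step. You assert that the set of $AN$-fixed lines in $\mathbb P^1(V)$ is finite, but this fails whenever an $A$-weight space inside the $N$-fixed subspace $V^N$ has dimension at least two: then there is an entire projective subspace of $AN$-fixed lines, not finitely many. The basis in the proof of Lemma \ref{lem-ProjActions} produces \emph{some} $AN$-fixed lines, not all of them. Consequently you cannot conclude that the $G$-saturate is a finite union of orbits, and the ergodicity argument as you phrased it does not close. The paper sidesteps this by observing instead that $G/AN$ is compact (Iwasawa $G=KAN$), so every $G$-orbit through an $AN$-fixed line is compact, hence closed in $\mathbb P^1(V)$; once the relevant orbits are closed, ergodicity forces $\mu$ onto a single one. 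Replace your finiteness claim with this compactness observation and the rest of your outline goes through.
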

\begin{proof}
Let $(G/P,\nu_P)$ be the Poisson boundary. We get a $G$-equivariant map $\kappa\colon G/P\to {\rm Prob}(\mathbb P^1(V))$ such that $\int_{G/P} \kappa(gP)d\nu_P(gP)=\nu$. Each $\kappa(gP)$ is a $gPg^{-1}$-invariant measure on $\mathbb P^1(V)$. By Lemma \ref{lem-ProjActions} $\kappa(gP)$ is supported on $gANg^{-1}$ fixed lines almost surely. It follows that almost all points in the support of $\nu$ are stabilized by a conjugate of $AN$. Since $G/AN$ is compact, the $G$-orbits of these points are closed in $\mathbb P^1(V)$. Using ergodicity, we deduce that the measure $\nu$ must be supported on a single $G$ orbit, with stabilizer conjugate to some closed subgroup $H\supset AN$. If the group $G$ fixes a line then it must fix it pointwise, because it is simple. We assumed that $V$ has no fixed subspaces, so $H$ is a proper subgroup.
\end{proof}
\begin{lem}\label{lem-ANParab}
Let $G$ be a connected simple rank one real Lie group. Let $P$ be a minimal parabolic of $G$ with Langlands decomposition $P=MAN$. Let $H$ be closed subgroup containing $AN$. Then either $H\subset P$ or $H=G$.
\end{lem}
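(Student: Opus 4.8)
The statement asserts that for $G$ connected simple of rank one, a closed subgroup $H \supseteq AN$ is either contained in $P = MAN$ or equals $G$. My approach would be purely structural, using the Bruhat decomposition and the fact that in rank one the Weyl group has order $2$, so there is essentially only one nontrivial parabolic to worry about.

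First I would observe that since $H \supseteq AN$ and $AN$ is cocompact in $G$ (as $G/AN \cong G/P \times M$ up to compact factors, or more simply $G = KAN$), $H$ is itself cocompact, hence $H/AN$ is a closed subset of the compact space $G/AN$. Next I would use the Bruhat decomposition $G = P \sqcup PwP = P \sqcup NwP$ (rank one: only two double cosets, indexed by the two Weyl elements), equivalently $G = P \cup \bar{N}P$ where $\bar N$ is the opposite unipotent radical. Since $H \supseteq AN \supseteq N$ and $H \supseteq A$, the question of whether $H = G$ reduces to whether $H$ contains a single element outside $P$: if $H \not\subseteq P$, pick $g \in H \setminus P$, and I would show that $\langle AN, g\rangle$ already generates $G$. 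Concretely, writing $g$ in Bruhat form relative to the decomposition $G = P\bar N$ (or $\bar N M A N$ on the big cell), the component of $g$ in $\bar N$ is nontrivial; conjugating this $\bar N$-part by the torus $A \subseteq H$ and using that $A$ acts on $\bar{\mathfrak n}$ with strictly negative weights while it acts on $\mathfrak n$ with strictly positive weights, I can extract from $H$ a nontrivial one-parameter unipotent subgroup of $\bar N$.

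The key point is then: $H$ contains $N$ (a nontrivial unipotent subgroup for some positive root system) and a nontrivial element of $\bar N$. In rank one, $\mathfrak g = \bar{\mathfrak n} \oplus \mathfrak m \oplus \mathfrak a \oplus \mathfrak n$ with $\dim \mathfrak a = 1$, and the subalgebra generated by a nonzero root vector $X \in \mathfrak n$ and a nonzero $Y \in \bar{\mathfrak n}$ is all of $\mathfrak g$: indeed $[X,Y]$ has a nonzero $\mathfrak a$-component (by the standard $\mathfrak{sl}_2$-triple argument, since the only way the bracket of a positive and a negative root vector lands off $\mathfrak a \oplus \mathfrak m \oplus$ (shorter roots) is controlled, and one can always arrange a genuine $\mathfrak{sl}_2$), so the generated subalgebra contains $\mathfrak a$, hence all of $\mathfrak n$ and $\bar{\mathfrak n}$ by applying $\operatorname{ad}$ of $\mathfrak a$, and then $\mathfrak m$ appears as $[\bar{\mathfrak n},\mathfrak n]$; simplicity finishes it. So the connected component $H^\circ$ is all of $G$, whence $H = G$.

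The main obstacle I anticipate is the case analysis when $G$ has two root lengths (the $\mathrm{Sp}(n,1)$ and $F_4^{-20}$ cases, where $\mathfrak n$ is a Heisenberg-type algebra with a $2$-step structure): one must be careful that a "nontrivial element of $\bar N$" extracted via the $A$-contraction argument genuinely pairs with something in $\mathfrak n$ to produce a nonzero element of $\mathfrak a$ rather than getting trapped in the center of $\bar N$ with a degenerate bracket. I would handle this by choosing the element of $\bar N$ carefully — using the $A$-action to push $g$'s $\bar N$-component onto a highest-weight (longest-root) vector, or alternatively by noting that the center of $\bar N$ is itself generated under $\operatorname{ad}\mathfrak n$ once we have the shorter-root part. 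An alternative, cleaner route that sidesteps the Lie-algebra bookkeeping entirely: since $H$ is closed and cocompact, if $H \neq G$ then $H^\circ$ is a proper closed subgroup containing $AN$, but the parabolic subgroups containing a minimal parabolic (equivalently, the closed subgroups containing $AN$ with finitely many connected components, by a result on subgroups containing a parabolic) in the rank one case are exactly $G$ and $P$ itself up to the discrete part; one then checks $H \subseteq N_G(P) = P$. I would present whichever of these is shortest given the conventions already fixed in the paper.
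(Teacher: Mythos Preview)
Your extraction step via Bruhat and $A$-contraction can be made rigorous (since $N\subset H$ one may arrange the chosen $g\in H\setminus P$ to lie in the big cell $\bar N P$, strip off the $AN$-part to get $\bar n m\in H$ with $\bar n\neq 1$, then let $A$ contract $\bar N$ to conclude $m\in H$, hence $\bar n\in H$, and finally conjugate $\bar n$ by $A$ to produce a nontrivial arc in $H^\circ\cap\bar N$, so $\frak h\cap\bar{\frak n}\neq 0$). The genuine gap is the generation step. Your assertion that from $\frak a+\frak n$ together with a single nonzero $Y\in\bar{\frak n}$ one recovers ``all of $\bar{\frak n}$ by applying $\operatorname{ad}$ of $\frak a$'' is wrong: $\operatorname{ad}\frak a$ acts by scalars on each restricted root space, so it cannot enlarge the span inside $\bar{\frak n}$ beyond the weight components of $Y$. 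In the two-root-length case you are left with a single line in $\frak g_{-\alpha}$ or $\frak g_{-2\alpha}$, and bracketing with $\frak g_\alpha$ produces only a proper subspace of $\frak m+\frak a$; iterating does not obviously exhaust $\bar{\frak n}$. Your proposed fixes do not work as stated --- in particular the ``alternative cleaner route'' invokes a classification of subgroups containing a Borel that is valid for \emph{algebraic} subgroups, whereas $H$ here is only closed, and the concluding step $H\subset N_G(P)$ is simply unjustified.

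The paper's argument is organised differently and contains precisely the idea you are missing. It first disposes of the case $\frak h\subset\frak p$ by a short maximal-split-torus and Weyl-group argument forcing $H\subset P$ (your extraction argument, once completed, would in fact let you bypass this case). For $\frak h\not\subset\frak p$ the key observation is this: once $\frak h$ contains some $0\neq E_{-\alpha}\in\frak g_{-\alpha}$ it contains the whole $\frak{sl}_2$-triple $\frak s$ spanned by $E_{-\alpha}$, $\Theta E_{-\alpha}\in\frak g_\alpha\subset\frak n$, and their bracket, so $\frak h$ is an $\frak s$-submodule of $\frak g$. Because finite-dimensional $\frak{sl}_2$-modules have symmetric weight multiplicities and (rank one!) $\frak a$ is spanned by the semisimple element of $\frak s$, one gets $\dim(\frak h\cap\frak g_\xi)=\dim(\frak h\cap\frak g_{-\xi})$ for every restricted root $\xi$; since $\frak n\subset\frak h$ this forces all of $\bar{\frak n}\subset\frak h$ at once. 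Only then does the paper show that $\frak n+\bar{\frak n}$ generates $\frak g$, via complexification and the root-system combinatorics of Lemma~\ref{lem-RootGen}. This $\frak{sl}_2$-symmetry trick is exactly the missing ingredient that would also close the gap in your approach.
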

\begin{proof}
Let $\frak h, \frak p,\frak m, \frak a,\frak n$ be the Lie algebras of $H,P,M,A,N$ respectively. Let $\Theta$ be a Cartan involution of $G$ stabilizing $A$. Let $\Sigma$ be the root system of $A$ and let $\Sigma^+$ be the set of positive roots.  For any $\lambda\in \frak h^*$ let 
\[\frak g_\lambda=\{X\in \frak g| [Y,X]=\lambda(Y) \textrm{ for } Y\in \frak a\}.\] We have (\cite[p. 122--123]{Knapp})
\begin{equation*}\frak p=\frak g_0+\sum_{\alpha\in \Phi^+}\frak g_\alpha,\qquad\frak n=\sum_{\alpha\in\Phi^+}\frak g_{\alpha},\qquad \frak n^-=\sum_{\alpha\in\Phi^+}\frak g_{-\alpha},\\
\end{equation*} where $\frak n^-$ is the Lie algebra of the unipotent radical of the opposite parabolic.

First consider the case $\frak h\subset \frak p$. Let $H_0$ be the connected component of $H$. We have $H_0\subset P$ and we need to show $H\subset P$. Let $h\in H$. The set of maximal split tori in $H_0$ forms a single conjugacy class \cite[Thm. 15.14]{Bor}, so there exists an $h_0\in H_0$ such that $h_1:=hh_0^{-1}$ normalizes $A$. The rank of $G$ is one so either $h_1$ commutes with $A$ or $h_1ah_1^{-1}=a^{-1}$ for every $a\in A$. In the first case $h_1\in Z_G(A)= MA\subset P$. In the second $h_1 \frak n h_1^{-1}=\frak n^-$, which is impossible because $h_1$ normalizes $\frak h$. We deduce that $h_1\in P$ and consequently $h\in P$. This proves $H\subset P$.

Now consider the case $\frak h\not\subset \frak p$.
 Since $\frak h\not \subset \frak p$, there exists a positive root $\alpha\in \Phi^+$ such that $\frak g_{-\alpha}\cap \frak h\neq 0.$ Let $E_{-\alpha}\in \frak g_{-\alpha}\cap \frak h$ be a non-zero element. For any non-zero $E_{\alpha}\in \mathbb R \Theta(E_{-\alpha})$ the subalgebra $\frak s:=\mathbb R E_{\alpha}+\mathbb R E_{-\alpha}+\mathbb R [E_\alpha,E_{-\alpha}]$ is isomorphic to $\frak{sl}_2(\mathbb R)$ \cite[p. 68]{Knapp}. 
 Note that since $\frak n \subset \frak h$, we automatically have $E_\alpha\in \frak h$, so $\frak s$ is a subalgebra of $\frak h$. Let $Y_\alpha=[E_\alpha,E_{-\alpha}].$ The rank of $G$ is $1$ so $\frak a=\mathbb R Y_\alpha$. 

We recall that the representations $\frak s\simeq \frak{sl}_2(\mathbb R)$ have symmetric weight space decomposition with respect to $\frak a$ \cite[Thm 2.4]{Knapp}. For every $\xi\in\frak a^*$ we have $\dim (\frak h\cap \frak g_\xi)=\dim (\frak h\cap \frak g_{-\xi})$. Therefore 
\[\dim \frak n=\dim (\frak h\cap \frak n)=\sum_{\beta\in\Sigma^+} \dim(\frak h\cap \frak g_\beta)=\dim(\frak h\cap \frak n^-)=\dim \frak n^-,\]
so $\frak n^-\subset \frak h$. This proves that $\frak n+\frak n^-\subset \frak h$. 

\textbf{Claim.} $\frak n+\frak n^-$ generates $\frak g$. To prove the claim we pass to the complexification $\frak g_\mathbb C$. It will be enough to show that $\frak n_\mathbb C+\frak n^-_\mathbb C$ generate $\frak g_\mathbb C$. Choose a maximal Cartan subalgebra $\frak b$ of $\frak m$ and let $\frak c=\frak a+i\frak b$. Then, $\frak c_\mathbb C$ is a Cartan subalgebra of $\frak g_\mathbb C$ and all the roots of $\frak c$ in $\frak g_\mathbb C$ are real.  Let $\Phi$ be the root system of $\frak c$ and finally let $V_0\subset \frak c^*$ be the subspace
\[ V_0:=\{\xi\in \frak c^*| \xi(Y_\alpha)=0\}.\] The centralizer of $\frak a$ is $\frak a_\mathbb C+\frak m_\mathbb C$ so the roots of $\frak c$ in $\frak m_\mathbb C$ are precisely those that vanish on $\frak a$. Therefore
\[\frak n_\mathbb C+\frak n^-_\mathbb C=\sum_{\xi\in\Phi\setminus V_0}\frak g_{\mathbb C,\xi}.\]
By \cite[Prop. 4.1.(g)]{Knapp}, we have $\dim \frak g_{\mathbb C,\lambda}=1$ for every root $\lambda\in \Phi$. Moreover, we have \cite[p.61]{Knapp}\footnote{These identities are true only for root systems of complex semisimple algebras, which is why we had to pass to the complexification of $\frak g$.}
\begin{equation}\label{eq-comutators}
[\frak g_{\mathbb C,\beta},\frak g_{\mathbb C,\gamma}]=\begin{cases} \frak g_{\mathbb C,\beta+\gamma} &\textrm{if } \beta+\gamma\in \Phi,\\
\mathbb C X_\beta &\textrm{if } \gamma=-\beta,\\
0 &\textrm{otherwise,}\end{cases}
\end{equation}
where $X_\beta$ is an element of $\frak c$ such that $\xi(X_\beta)=\langle \xi,\beta\rangle$ for every $\xi\in \frak c^*$. By Lemma \ref{lem-RootGen}, $\Phi\setminus V_0$ generates $\Phi$. By (\ref{eq-comutators}) the set of roots in the Lie algebra generated by $\frak n_\mathbb C+\frak n_\mathbb C$ is closed under taking reflections so it must be the whole root system. Using (\ref{eq-comutators}) once again we deduce that $\frak n_{\mathbb C}+\frak n^-_\mathbb C$ generates $\frak g_\mathbb C$. The claim is proved.

It follows that $\frak h=\frak g$, so $H=G$.
\end{proof}

\begin{proof}[Proof of Theorem \ref{prop:rank-one}]
Assume the stabilizers are not discrete almost surely. By ergodicity, the dimension of the Lie algebra of $\overline{}{\rm Stab_G}(x)$ is almost surely equal to some constant $k$. The action is non-trivial so $k<\dim G$.  Let $\mathcal G_{k,\frak g}$ be the Grassmannian of $k$-planes in $\frak g$ and let \[\pi\colon \mathcal G_{k,\frak g}\ni W\to \left[ \bigwedge\nolimits^k W\right]\in \mathbb P^1\left(\bigwedge\nolimits^k \frak g\right)\] be the Pl\"ucker embedding. The image of the Grassmannian is a closed subset of $\mathbb P^1(\bigwedge^k \frak g)$. The map $x\mapsto \pi({\rm lie}(\overline{{\rm Stab_G x}}))\in \mathbb P^1(\bigwedge^k \frak g)$ is $G$-equivariant. Write $V=\bigwedge^k \frak g$ and let $\nu'$ be the pushforward of $\nu$ to $\mathbb P^1(V)$. By Lemma \ref{lem-Projective}, $(\mathbb P^1(V), \nu')$ is supported on a single orbit $G[v]$, with $H:={\rm Stab_G}([v])\supset AN$. The group $G$ is simple, so no $k$-dimensional subspace of $\frak g$ can be fixed by $G$. Since the orbit $G[v]$ is contained in the image of the Pl\"ucker embedding we deduce that $H$ is a proper subgroup of $G$. By Lemma  \ref{lem-ANParab} $H$ is a subgroup of $P$, so $(\mathbb P^1(V), \nu')$ admits $(G/P,\nu_P)$ as a factor. 

Assume now that ${\rm Stab}_G(x)$ is not  Zariski dense almost surely. The stabilizers must be infinite because by Lemma \ref{lem-StatioFin} there are no $\mu$-stationary measures on the conjugacy classes of finite subsets of $G\setminus \{1\}$. Therefore the stabilizers must have Zariski closures of dimension between $1$ and $\dim \frak g-1$. By ergodicity, there exists $1\leq k< \dim \frak g$ such that the Zariski closure $\overline{{\rm Stab}_Gx}^{Z}$ is $k$-dimensional almost surely. Consider the map $x\mapsto \pi({\rm lie}(\overline{{\rm Stab_G x}}^Z))\in \mathbb P^1(\bigwedge^k \frak g)$. Arguing as in the paragraph above we prove that $(X,\nu)$ admits $(G/P,\nu_P)$ as a factor. 
\end{proof}

\section{Stiffness of discrete stationary random subgroups for higher rank groups}\label{sec:stiffness}

In this section we establish a stiffness result for stationary measures on the space of discrete subgroups. In particular we show that every such measure which is `irreducible' with respect to the rank one factors of $G$ is invariant (see Theorem \ref{thm:stiffness}). This result is a consequence of a decomposition theorem (Theorem \ref{thm:stationary-decomposition}) which extends the results of \S \ref{sec:IRS} from invariant to stationary measures. 
%
The key ingredient in the proof of Theorem \ref{thm:stationary-decomposition} is the following result, due to Nevo and Zimmer \cite{NZ}.

\begin{thm}[Nevo-Zimmer \cite{NZ}]\label{thm-NZ}
Let $G$ be a higher rank semisimple Lie group. Let $\mu$ be a smooth probability measure on $G$ and let $(X,\nu)$ be a probability $\mu$-stationary action of $G$. Then either
\begin{itemize}
\item  $\nu$ is $G$-invariant, 
\item there exists a proper parabolic subgroup $Q\subset G$ and a measure preserving $G$-equivariant map\footnote{ By measure preserving we only mean that $\nu(\pi^{-1}(A))=\mu_Q(A)$ for every Borel set $A\subset G/Q$. In particular we do not require that it is a relatively measure preserving extension.} $\pi\colon (X,\nu)\to (G/Q,\mu_Q),$ where $\mu_Q$ is the unique $\mu$-stationary measure on $G/Q$, or
\item $(X,\nu)$ has an equivariant factor space, on which $G$ acts via a rank one factor group.
\end{itemize}
\end{thm}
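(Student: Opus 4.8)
This is the structure theorem of Nevo and Zimmer; I sketch the strategy of their proof \cite{NZ,NZ99}, which is the approach I would take.

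The plan is to transfer the question about the $\nu$-stationary $G$-action into a question about an invariant measure for a minimal parabolic $P=MAN$, and then to bring in the higher-rank hypothesis through the intermediate factor theorem. First I would use that $(G/P,\mu_P)$ is the Poisson boundary of $(G,\nu)$ to get a $G$-equivariant measurable boundary map $\beta\colon G/P\to\mathrm{Prob}(X)$ with $\mu=\int_{G/P}\beta\,d\mu_P$. Equivariance of $\beta$ forces $\mu_0:=\beta(eP)$ to be $P$-invariant, since $P=\mathrm{Stab}_G(eP)$, and $\mu$ is then recovered as $\int_{G/P} g_*\mu_0\,d\mu_P(gP)$. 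So it suffices to understand the $P$-invariant---in particular $N$-invariant---probability measures $\mu_0$ on $X$ that arise this way, together with the $G$-factors they generate.

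Next I would isolate the \emph{maximal projective factor} of $(X,\mu)$: the largest $G$-factor admitting a $G$-equivariant map to the projective space $\mathbb{P}(V)$ of some rational representation of $G$ with image a single $G$-orbit. Its existence is obtained in \cite{NZ99} by joining all projective factors. By Chevalley's theorem \cite[5.1]{Bor} such an orbit is $G/H$ with $H$ algebraic, and since the factor measure is $\nu$-stationary, the classification of stationary measures on algebraic homogeneous spaces---which appears above as Lemma~\ref{lem-StatioHS} and forces $H$ to contain $AN$ up to conjugacy---places $H$ inside a parabolic, so this factor is $(G/Q,\nu_Q)$ for a parabolic $Q$, with $\nu_Q$ the unique $\nu$-stationary measure on the compact homogeneous space $G/Q$. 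If $Q\neq G$ we are in the second alternative of the theorem, so from here on I would assume the maximal projective factor is trivial.

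The hard part will be the remaining implication: when the maximal projective factor is trivial, $\mu$ should be $G$-invariant except for structure carried by rank-one factors. Here I would combine the intermediate factor theorem of Nevo and Zimmer \cite{NZ-IFT,NZ}---which uses real rank at least $2$ to force any $G$-factor interpolating between $(X,\mu)$ and its joining with the Poisson boundary to be controlled by a parabolic subgroup---with a Furstenberg-entropy argument: the Furstenberg entropy of the stationary action is accounted for by its maximal projective factor over the higher-rank part, so triviality of the latter makes that entropy vanish, and zero Furstenberg entropy is equivalent to the action being measure-preserving. Any entropy not accounted for this way is concentrated on a rank-one quotient of $G$, on which one extracts a $G$-factor---the third alternative, which cannot be strengthened since rank-one stationary actions need not be rigid \cite{NZ99}. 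I expect this last step, turning the absence of a projective factor into invariance modulo rank-one factors, to be by far the main obstacle, as it is exactly the point where the higher-rank intermediate factor theorem and the entropy estimates of \cite{NZ,NZ99} are essential.
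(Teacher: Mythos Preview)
The paper does not give a proof of this statement: Theorem~\ref{thm-NZ} is quoted as the Nevo--Zimmer structure theorem from \cite{NZ} and used as a black box (see the sentence preceding it: ``The key ingredient \ldots\ is the following result, due to Nevo and Zimmer''). So there is no proof in the paper to compare your proposal against.

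Your sketch is a fair outline of the actual Nevo--Zimmer argument in \cite{NZ,NZ99}: pass to the $P$-invariant limit measure via the boundary map, identify the maximal projective factor as some $G/Q$ using the classification of stationary measures on algebraic homogeneous spaces (this is indeed the content of Lemma~\ref{lem-StatioHS} in the paper), and then use the higher-rank intermediate factor/entropy machinery to force invariance modulo rank-one factors. Your identification of the last step as the crux is correct. For the purposes of this paper, however, you should simply cite \cite{NZ} rather than reproduce the argument.
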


In what follows, $P$ will be a minimal parabolic subgroup of $G$.

\begin{lem}\label{lem-AIRS} Let $Q$ be a parabolic subgroup of $G$. Let $Q=LN$ be a Levi decomposition of $Q$ and let $A_L$ be the centre of $L$. Then, any discrete $A_L$-invariant random subgroup of $Q$ is contained in $L$ almost surely. 
\end{lem}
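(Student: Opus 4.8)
The plan is to exploit that the centre $A_L$ of $L$ contains elements that contract the unipotent radical $N$ under conjugation, and to combine Poincar\'e recurrence with the quantitative control of the discreteness radius.

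First I would fix $a\in A_L$ with $a^k n a^{-k}\to 1$ for every $n\in N$ as $k\to\infty$; such $a$ exists because $N$ is the unipotent radical of the parabolic $Q$, so every weight of the split part of the centre of $\mathrm{Lie}(L)$ on $\mathrm{Lie}(N)$ lies in an open half-space, and one takes $a$ deep in the corresponding chamber. Let $T$ be conjugation by $a$ on $\sub_d(Q)$; the given measure $\mu$ is $T$-invariant. Since $a$ centralises $L$ and $N\lhd Q$, writing an element of $Q$ uniquely as $\ell n$ with $\ell\in L$, $n\in N$ (via the homomorphism $\ell(\cdot)\colon Q\to L=Q/N$) one has $T^k(\ell n)=\ell\,(a^k n a^{-k})\to \ell$.

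Next, Poincar\'e recurrence gives, for $\mu$-a.e.\ $\Lambda$, a sequence $k_j\to\infty$ with $T^{k_j}\Lambda\to\Lambda$ in the Chabauty topology. For such $\Lambda$ and any $\gamma=\ell n\in\Lambda$, the points $T^{k_j}\gamma\in T^{k_j}\Lambda$ converge to $\ell$, so $\ell\in\Lambda$ because a Chabauty limit contains every limit of points taken from the approximating sets; hence $\ell(\gamma),\,n(\gamma)=\ell(\gamma)^{-1}\gamma\in\Lambda$ and $\Lambda=\ell(\Lambda)\cdot(\Lambda\cap N)$. It therefore suffices to show $\Lambda\cap N=\{1\}$ $\mu$-almost surely. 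For this, observe that $\Lambda\mapsto\Lambda\cap N$ is a Borel $A_L$-equivariant map $\sub_d(Q)\to\sub_d(N)$, so its pushforward $\mu_N$ is a $T$-invariant probability measure on $\sub_d(N)$. If $\mu_N(\{\Gamma\ne\{1\}\})=\gep_0>0$, then on that event the discreteness radius $\mathcal I_N$ (computed inside $N$, using $\|\cdot\|$ restricted to $\mathrm{Lie}(N)$) satisfies $\mathcal I_N(T^k\Gamma)\to 0$, since a fixed nontrivial element of $\Gamma$ is pushed to $1$ by $T^k$. By dominated convergence $\mu_N(\{\mathcal I_N\circ T^k<\delta\})\ge\gep_0/2$ for all large $k$, and $T$-invariance then yields $\mu_N(\{\mathcal I_N<\delta\})\ge\gep_0/2$ for every $\delta>0$, forcing $\mu_N(\{\mathcal I_N=0\})>0$. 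This contradicts $\mathcal I_N>0$ on $\sub_d(N)$, so $\Lambda\cap N=\{1\}$ and $\Lambda\subseteq L$ almost surely.

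The step I expect to be the main obstacle is recognising that both halves of the argument are indispensable and must be run separately: recurrence alone does not rule out nontrivial discrete subgroups of $N$ (that requires the quantitative contraction of $\mathcal I_N$), whereas the bare fact $\Lambda\cap N=\{1\}$ does not imply $\Lambda\subseteq L$ --- a cyclic group generated by a hyperbolic affine transformation is a counterexample at the level of a single subgroup --- and it is precisely the recurrence argument applied to the Levi components that bridges this gap. No ergodicity of $\mu$ is needed anywhere.
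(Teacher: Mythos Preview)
Your proof is correct, but it takes a genuinely different route from the paper's. The paper gives a single direct argument: assuming $\lambda(\{\Gamma\not\subset L\})>0$, it fixes a product neighbourhood $V_2=W\cdot U_2$ with $W\subset L$, $U_2\subset N$ such that $\Omega=\{\Gamma:\Gamma\cap V_2\setminus L\ne\emptyset\}$ has positive measure, then uses discreteness to shrink $U_2$ to $U_1$ so that a positive-measure subset of $\Omega$ avoids $V_1\setminus L$ with $V_1=W\cdot U_1$; since conjugation by a suitable power of $a$ maps $V_2\setminus L$ into $V_1\setminus L$, this forces $\lambda(\Omega)$ to drop, contradicting $A_L$-invariance. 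Thus the paper handles ``twisted'' elements $\ell n$ with $\ell\ne 1$ and pure elements of $N$ in one stroke, never invoking Poincar\'e recurrence.

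Your argument instead separates these two phenomena: Poincar\'e recurrence shows $\ell(\Lambda)\subset\Lambda$ (dealing with the twisted elements), and a contraction argument on $\sub_d(N)$ shows $\Lambda\cap N=\{1\}$. Each half is valid, and your closing remark correctly identifies why neither suffices alone. The trade-off is that the paper's proof is shorter and more elementary---it needs no recurrence theorem and no auxiliary pushforward---while your decomposition is arguably more transparent about \emph{which} feature of $A_L$-invariance is doing what. Note also that your Part~2 alone, run on $V_2\setminus L$ rather than on $N$ (i.e.\ tracking $\inf\{\|\log n(\gamma)\|:\gamma\in\Lambda\cap W\!\cdot\! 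N\setminus L\}$), would reproduce the paper's argument and render Part~1 unnecessary.
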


\begin{proof}

Let $a\in A_L$ be an element such that the norm of the restriction of $\text{Ad}(a^{-1})$ to the Lie algebra of $N$ is less than $1$. It follows that for any two identity neighbourhoods $U_1,U_2\subset N$ in $N$ such that $U_2$ is bounded there is $n_0\in\BN$ such that $\text{Ad}(a^{-n_0})(U_2)\subset U_1$.

Let $\lambda$ be a discrete $A_L$-invariant random subgroup of $Q$. Suppose by way of contradiction that $\lambda (\{\gC\subset L\})<1$. Then there is a bounded identity neighbourhood $V_2$ in $Q$ such that 
$\lambda(\gO)>0$ where $\gO:=\{\gC:\gC\cap V_2\setminus L\ne\emptyset\}$.
Furthermore, we may take $V_2$ to be of the form $V_2=W\cdot U_2$ where $W\subset L$ and $U_2\subset N$ and suppose that they are preserved by $a^{-1}$ (e.g. we can suppose that $\log U_2$ is a norm ball in the Lie algebra of $N$). Since $\lambda$ is discrete we can chose a small identity neighborhood $U_1\subset N$ so that setting $V_1=W\cdot U_1$ we have 
$$
 \gep:=\lambda(\{\gC\in\gO:\gC\cap (V_1\setminus L)=\emptyset \})>0.
$$
Choosing $n_0$ as above we get that $\lambda (\gO^{a^{n_0}})\le\lambda(\gO)-\gep$ in contrast with the assumption that $\lambda$ is $A_L$-invariant.
\end{proof}
\begin{lem}\label{lem-LeviInt}
Let $G$ be a centre-free complex semisimple Lie group, let $Q$ be a proper parabolic subgroup. The intersection of all Levi subgroups of $Q$ is the product of the simple factors of $G$ contained in $Q$.
\end{lem}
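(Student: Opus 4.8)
The plan is to reduce the statement, which a priori depends on all the Levi factors of $Q$, to a single choice-independent centralizer computation, and then to settle that computation using the centre-freeness of $G$ together with exactly the root-theoretic input (Lemma \ref{lem-RootGen} and the commutator relations (\ref{eq-comutators})) already exploited in the proof of Lemma \ref{lem-ANParab}.

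First I would set up notation via the product decomposition. Since $G$ is centre-free semisimple, $G=G_1\times\cdots\times G_n$ with the $G_i$ simple, and the parabolic $Q$ decomposes as $Q=\prod_i(Q\cap G_i)$ with each $Q\cap G_i$ a parabolic of $G_i$; a factor $G_i$ is contained in $Q$ iff $Q\cap G_i=G_i$. Put $G'=\prod_{G_i\subset Q}G_i$ and let $G''$ be the product of the remaining factors, so $G=G'\times G''$ (with $G''$ again centre-free), $Q=G'\times Q''$ where $Q'':=Q\cap G''$ is proper on every simple factor of $G''$, and $N:=R_u(Q)=R_u(Q'')\subset G''$. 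Fix a Levi decomposition $Q=LN$; then $L=G'\times L''$ with $L'':=L\cap G''$ a Levi of $Q''$, and $G'$ is contained in \emph{every} Levi of $Q$ (the only Levi of the improper parabolic $Q\cap G_i=G_i$ is $G_i$ itself). Since the Levi subgroups of $Q$ form a single conjugacy class under $N=R_u(Q)$, the intersection of all Levi subgroups equals $\bigcap_{n\in N}nLn^{-1}$.

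Next I would identify this intersection with a centralizer. For $g\in L$ and $n\in N$, using that $L$ normalizes $N$ one has $ngn^{-1}=[n,g]\,g$ with $[n,g]=n\,(gn^{-1}g^{-1})\in N$; as $Q=NL$ with $N\cap L=\{1\}$, the element $ngn^{-1}$ lies in $L$ exactly when $[n,g]=1$, i.e. when $g$ commutes with $n$. Hence $\bigcap_{n\in N}nLn^{-1}=Z_L(N)$, and since $N\subset G''$ commutes with $G'$ we get $Z_L(N)=G'\times Z_{L''}(N)$. It remains to show $Z_{L''}(N)=\{1\}$; equivalently, that $\Ad(z)$ trivial on $\mathrm{Lie}(G'')$ for $z\in Z_{L''}(N)$. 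Let $z\in Z_{L''}(N)$, so $\Ad(z)$ is trivial on $\mathfrak{n}:=\mathrm{Lie}(N)$. Let $\mathfrak{n}^{-}$ be the nilradical of the opposite parabolic in $\mathrm{Lie}(G'')$; it is $\Ad(z)$-stable because $z\in L''$ normalizes the opposite unipotent radical. The Killing form of $\mathrm{Lie}(G'')$ restricts to a non-degenerate $\Ad(L'')$-invariant pairing $\mathfrak{n}\times\mathfrak{n}^{-}\to\mathbb{C}$ (distinct root spaces are Killing-orthogonal unless opposite), so from $B(X,\Ad(z)Y)=B(\Ad(z)X,\Ad(z)Y)=B(X,Y)$ for all $X\in\mathfrak{n}$, $Y\in\mathfrak{n}^{-}$ one concludes that $\Ad(z)Y-Y\in\mathfrak{n}^{-}$ is orthogonal to all of $\mathfrak{n}$, hence $\Ad(z)Y=Y$. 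Thus $\Ad(z)$ fixes $\mathfrak{n}\oplus\mathfrak{n}^{-}$ pointwise. On each simple factor of $\mathrm{Lie}(G'')$ the parabolic $Q''$ is proper, so the roots occurring in $\mathfrak{n}\oplus\mathfrak{n}^{-}$ are precisely the roots outside the proper subspace spanned by the roots of the Levi; by Lemma \ref{lem-RootGen} these generate the root system, and then the commutator relations (\ref{eq-comutators}) give, exactly as in the proof of Lemma \ref{lem-ANParab}, that $\mathfrak{n}\oplus\mathfrak{n}^{-}$ generates $\mathrm{Lie}(G'')$ as a Lie algebra. Hence $\Ad(z)=\mathrm{id}$ on $\mathrm{Lie}(G'')$, so $z$ is central in $G''$, and centre-freeness forces $z=1$. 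Therefore $\bigcap_{n\in N}nLn^{-1}=G'$, which is the product of the simple factors of $G$ contained in $Q$.

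The step I expect to be the main obstacle is the last one: showing that the only element of the Levi centralizing the unipotent radical is trivial. This is a relative version of Lemma \ref{lem-ANCentralizers}, and it is where the hypotheses genuinely enter — centre-freeness of $G$, and (after the reduction) that $Q$ restricts to a \emph{proper} parabolic on each simple factor of $G''$, which is exactly what makes Lemma \ref{lem-RootGen} applicable. The one point to handle carefully is the passage from "$\Ad(z)$ trivial on $\mathfrak{n}$" to "$\Ad(z)$ trivial on $\mathfrak{n}^{-}$"; the Killing-form duality between $\mathfrak{n}$ and $\mathfrak{n}^{-}$ as $\Ad(L'')$-modules does this cleanly, after which the generation statement and centre-freeness finish the argument.
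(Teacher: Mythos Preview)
Your proof is correct, but the route differs from the paper's. Both proofs first split off the simple factors contained in $Q$ and reduce to showing that, on the remaining factor, the intersection $J$ of all Levi subgroups of $Q$ is trivial; and both observe (you explicitly, the paper via $[J,N]\subset J\cap N\subset L\cap N=\{1\}$) that $J$ centralizes $N$. From there the arguments diverge.

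The paper argues structurally: $J$ is normal in $Q$, hence normal in $L$, hence reductive; a maximal torus $A'\subset J$ commutes with $N$, so every root in $R=\Phi^+\setminus S$ vanishes on $\mathfrak a'$, and since $R$ spans $\mathfrak a^*$ by part (2) of Lemma~\ref{lem-RootGen}, $\mathfrak a'=0$ and $J$ is finite; finally $J$ finite and normal in a Borel forces $J$ central, hence trivial. You instead identify $J=Z_L(N)$ by the elementary commutator computation $ngn^{-1}\in L\iff [n,g]=1$, then upgrade $\Ad(z)=\mathrm{id}$ on $\mathfrak n$ to $\mathrm{id}$ on $\mathfrak n^{-}$ via the Killing-form duality $\mathfrak n\times\mathfrak n^{-}\to\mathbb C$, and invoke part (1) of Lemma~\ref{lem-RootGen} with the commutator relations (exactly the Claim in the proof of Lemma~\ref{lem-ANParab}) to conclude that $\mathfrak n\oplus\mathfrak n^{-}$ generates the Lie algebra, so $\Ad(z)=\mathrm{id}$ and $z$ is central.

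What each buys: the paper's argument is a bit shorter and only needs the spanning statement (2) of Lemma~\ref{lem-RootGen}, but passes through reductivity and a finiteness-plus-normality step whose endgame (``finite normal in a Borel $\Rightarrow$ central'') is left implicit. Your argument is more hands-on and avoids that detour entirely; the Killing-form step is a clean extra idea not present in the paper, and it makes transparent that this lemma is a relative version of Lemma~\ref{lem-ANCentralizers} proved by the same mechanism as the Claim in Lemma~\ref{lem-ANParab}.
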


\begin{proof}
We can quotient $G$ by the product of all simple factors contained in $Q$. In this way we can assume that $Q$ contains no simple factors of $G$. Every parabolic subgroup in $G$ is a product of parabolic subgroups in the simple factors, so the lemma reduces to the case where $G$ is simple. 

Assume from now on that $G$ is simple. Let $J$ be the intersection of all Levi subgroups of $Q$. We need to show that $J=\{1\}$. Let $L$ be a Levi subgroup of $Q$ and let $N$ be the unipotent radical of $Q$. Let $P\subset Q$ be a Borel subgroup with a Levi subgroup $A\subset L$. Then $A$ is a maximal torus of $G$ (note that $G$ is split since we work over the field of complex numbers). Write $\frak{g,q,l,n,p,a}$ for the corresponding Lie algebras. Let $\Phi$ be the root system of $\frak g$ with respect to $\frak a$. Let $\Phi^+$ be the set of positive roots corresponding to $P$ and let $\Pi\subset \Phi^+$ be the subset of simple roots. Let $W$ be the Weyl group. Finally let $F\subset \Pi$ be the subset of simple roots lying in $\frak q$. Write $E\subset \frak a^*$ for the space spanned by $F$ and let $S=\Phi\cap E$. Then 
\begin{equation*} 
\frak l=\sum_{\lambda\in S} \frak g_\lambda,\qquad\frak n=\sum_{\substack{\lambda\in \Phi^+\setminus S}} \frak g_\lambda,\qquad\frak q=\frak l+\frak n=\sum_{\lambda\in S\cup \Phi^+}\frak g_\lambda,
\end{equation*}
see e.g. \cite[p.67]{GV}. Let $R:=\Phi^+\setminus S$. By Lemma \ref{lem-RootGen}, $R$ spans $\frak a^*$. Let $A'\subset A$ be a maximal torus of $J$. Since both $J$ and $N$ are normal in $Q$ we have $[J,N]\subset J\cap N\subset L\cap N=\{1\}$. The torus $A'$ commutes with $N$ so the roots in $R$ vanish on its Lie algebra $\frak a'$. As $R$ spans $\frak a^*$ we get $\frak a'=0$. As a normal subgroup of $L$, $J$ must be reductive, so triviality of maximal tori implies that $J$ is finite. Since $J$ is finite and normal in $P$ it must be in the centre, so $J=\{1\}$. 
\end{proof}

\begin{lem}\label{lem-PIRS} Let $G$ be a centre-free semisimple Lie group with proper parabolic subgroup $Q$ containing $P$. Let $\Lambda\subset Q$ be a discrete $P$-invariant random subgroup of $Q$. Then there is a proper semisimple factor $H$ of $G$ such that $\Lambda$ is supported on discrete subgroups of $H$ almost surely. 
\end{lem}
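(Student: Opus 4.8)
The plan is to combine the previous two lemmas. We have a proper parabolic $P\subset Q$; let $Q=LN$ be the Levi decomposition with $A_L$ the centre of $L$. Since $P\subset Q$, the centre $A_L$ is contained in $P$, so a $P$-invariant random subgroup of $Q$ is in particular $A_L$-invariant. By Lemma \ref{lem-AIRS} the discrete $P$-invariant random subgroup $\Lambda$ is therefore supported on discrete subgroups of $L$ almost surely. Now $\Lambda$, viewed as a random subgroup of $L$, is invariant under $P\cap L$, which is a minimal parabolic subgroup of $L$; and we may iterate. The key point is that after this reduction we still have a genuine parabolic-invariance in $L$, so we are in a position to induct on $\dim G$ (or on the rank), the base case being when the parabolic equals the full group, i.e. $L$ semisimple with no further proper parabolic containment to exploit.

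More precisely, I would argue as follows. Consider the tower of Levi subgroups obtained by repeatedly applying Lemma \ref{lem-AIRS}: set $L_0=Q$, and having produced a discrete random subgroup of a Levi $L_i$ that is invariant under a parabolic $P\cap L_i$ of $L_i$, write $L_i=L_{i+1}N_{i+1}$ with $L_{i+1}$ a smaller Levi (the Levi of $P\cap L_i$ inside $L_i$) and apply Lemma \ref{lem-AIRS} to conclude $\Lambda$ is supported on discrete subgroups of $L_{i+1}$. This process terminates when $L_{i+1}$ is a minimal Levi, namely $M A$ in the Langlands decomposition $P=MAN$. Thus $\Lambda$ is almost surely supported on discrete subgroups of $MA$. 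But $MA$ is amenable (it is compact-by-abelian, being $M$ compact times $A\cong\BR^k$), so the only discrete subgroup of $MA$ that can carry a $P$-invariant random subgroup structure consistent with ergodicity is essentially forced: a discrete $P$-invariant (hence $A$-invariant) random subgroup of $MA$ — here one uses again Lemma \ref{lem-AIRS} applied with the $\BR$-split directions of $A$, or directly the dilation argument of that proof — must be supported on subgroups of $M$, which is compact. Then one identifies the relevant proper semisimple factor $H$ of $G$: by Lemma \ref{lem-LeviInt} the intersection of all the Levi subgroups of $Q$ is exactly the product of simple factors of $G$ lying inside $Q$, and that product is the semisimple factor $H$ we want. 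Since $\Lambda$ sits inside every Levi subgroup of $Q$ (the choice of Levi in each step was immaterial, or one intersects over all choices), $\Lambda$ is supported on $H$; and $H$ is proper because $Q$ is a proper parabolic.

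The main obstacle I anticipate is organizing the inductive step cleanly: Lemma \ref{lem-AIRS} as stated gives containment in a single fixed Levi $L$, and one needs to know that $\Lambda$ actually lands in the intersection of all Levis. The honest way to handle this is to observe that two different Levi subgroups of $Q$ are conjugate by an element of the unipotent radical $N$, and a discrete subgroup of $Q$ lying in one Levi $L$ and satisfying $\Lambda=\Lambda$ after this $N$-conjugation forces $\Lambda\subset L\cap nLn^{-1}$; using $P$-invariance (in particular invariance under enough of $N$ via the contracting element argument of Lemma \ref{lem-AIRS}) one pushes $\Lambda$ into the common part. Then Lemma \ref{lem-LeviInt} finishes the identification. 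Alternatively, and more cleanly, once we know $\Lambda$ is supported on discrete subgroups of the minimal Levi $MA$ and in fact of $M$, we note that $M$ is contained in every Levi of $Q$, so no intersection subtlety remains and the semisimple factor $H=\bigcap_L L$ (intersection over Levis of $Q$) contains the support; combined with properness of $Q$ this is exactly the claim. I would also double-check the degenerate case where $Q$ contains no simple factor of $G$, where the conclusion should read that $\Lambda$ is trivial almost surely, consistently with $H=\{1\}$ and the amenability obstruction ruling out nontrivial discrete invariant random subgroups.
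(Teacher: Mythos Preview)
Your ``honest way'' in the second paragraph is correct and is exactly the paper's argument: one application of Lemma~\ref{lem-AIRS} gives $\Lambda\subset L$ almost surely, and then $P$-invariance of the \emph{law} immediately upgrades this to $\Lambda\subset pLp^{-1}$ a.s.\ for every $p\in P$, hence $\Lambda\subset\bigcap_{p\in P}L^p$. Since the unipotent radical $N$ of $Q$ lies in $P$ and all Levi subgroups of $Q$ are $N$-conjugates of $L$, this intersection equals $\bigcap_{q\in Q}L^q$, which by Lemma~\ref{lem-LeviInt} is the product $H$ of simple factors of $G$ contained in $Q$. No contracting-element argument is needed at this step; invariance of the distribution does all the work. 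You should promote this to the main argument rather than presenting it as a fallback.

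By contrast, your primary iterative scheme has a genuine gap. After the first step you have $\Lambda\subset L$ with $L$ reductive. To apply Lemma~\ref{lem-AIRS} again you would need $\Lambda$ to sit inside a proper parabolic of $L$, but you only know $\Lambda\subset L$. The decomposition ``$L_i=L_{i+1}N_{i+1}$'' you write down does not exist for $i\ge 1$: a Levi has trivial unipotent radical. Nor can the contracting trick be reused inside $L$, since any $a\in A$ acting on $L$ has both expanding and contracting root spaces, and the central torus $A_L$ acts trivially on $L$. So the descent to $MA$ and then to $M$ is not justified. Relatedly, your clean alternative ``$M$ is contained in every Levi of $Q$'' is false: when $G$ is simple and $Q$ proper, Lemma~\ref{lem-LeviInt} says the intersection of all Levis of $Q$ is trivial, yet $M$ is typically nontrivial.
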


\begin{proof}
Let $L\le Q$ be a Levi subgroup of $Q$, such that the centre $A$ of $L$ is contained in $P$. By Lemma \ref{lem-AIRS}, $\Lambda\subset L$ almost surely. Since $\Lambda$ is $P$-invariant and $Q=LP$ we can upgrade this inclusion to $\Lambda\subset \bigcap_{p\in P} L^p=\bigcap_{q\in Q} L^q$. Let $J$ be the intersection of all Levi subgroups of $Q$. The set of real points of $Q$ is Zariski dense so $J$ coincides with the real points of the intersection of all complex Levi subgroups of $Q(\mathbb C)$. By Lemma \ref{lem-LeviInt} we deduce that $\Lambda$ is supported on $\sub_d(H)$ where $H=\bigcap_{q\in Q} L^q$ is a proper semisimple factor of $G$ almost surely.
\end{proof}


The following decomposition theorem generalizes Theorem \ref{thm:decomposition} from invariant random subgroups to discrete stationary random subgroups:

\begin{thm}\label{thm:stationary-decomposition}
Let $G$ be a connected centre-free semisimple Lie group without compact factors and $\nu$ a discrete $\mu$-stationary random subgroup of $G$. Then $G$ decomposes to a product of three semisimple factors $G=G_\mathcal{I}\times G_\mathcal{H}\times G_\mathcal{T}$ such that
\begin{enumerate}
\item $\nu$ projects to an IRS in $G_\mathcal{I}$ for which all the irreducible factors are of rank at least $2$.
\item $G_\mathcal{H}$ is a product of rank one factors and $\nu$ projects discretely to every factor of $G_\mathcal{H}$.
\item $\nu$ projects trivially to $G_\mathcal{T}$.
\end{enumerate}
Furthermore, the intersection of a random subgroup with every simple factor of $G_\mathcal{H}$ as well as with every irreducible factor of $G_\mathcal{I}$ is  almost surely Zariski dense in that factor.
\end{thm}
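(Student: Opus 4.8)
The plan is to run an induction on the number of simple factors of $G$, peeling off factors one at a time, using the Nevo--Zimmer trichotomy (Theorem \ref{thm-NZ}) as the engine and the results of \S\ref{sec:IRS} together with Lemma \ref{lem-PIRS} and Lemma \ref{lem:discrete-trivial} to control what happens in each case. We may assume $\mu$ is ergodic, since by the Choquet decomposition an arbitrary discrete stationary random subgroup is a barycentre of ergodic ones, and the three-factor decomposition will be forced to be the same on a positive-measure set of components (the factor $G_{\mathcal I}$ is, up to reordering, where the projection fails to be discrete, $G_{\mathcal T}$ is where the projection is trivial, and $G_{\mathcal H}$ is the rest; these are intrinsic). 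Apply Theorem \ref{thm-NZ} to the $G$-action on $(\sub_d(G),\mu)$: either $\mu$ is already $G$-invariant, in which case Theorem \ref{thm:decomposition} and Corollary \ref{cor:irreducibility} finish everything with $G_{\mathcal H}=G_{\mathcal T}=\{1\}$ (the rank-one IRS factors get absorbed into $G_{\mathcal I}$, but note we must then further split off any rank-one irreducible IRS factor — see below); or there is a measure-preserving equivariant map to $(G/Q,\nu_Q)$ for some proper parabolic $Q$; or there is a factor on which $G$ acts through a rank-one quotient.

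First I would treat the parabolic case. Disintegrating $\mu$ over the map $(\sub_d(G),\mu)\to(G/Q,\nu_Q)$, the fibre over the base point $Q$ carries a discrete $Q$-invariant — in particular $P$-invariant for a minimal parabolic $P\subset Q$ — measure on $\sub_d(G)$; restricting attention to those subgroups that happen to lie in $Q$ (one must first argue they do, e.g. via Lemma \ref{lem-AIRS} applied to the centre of a Levi $L\subset Q$ with $A\subset P$, exactly as in Lemma \ref{lem-PIRS}), Lemma \ref{lem-PIRS} produces a proper semisimple factor $H\lhd G$ such that the fibre measure is supported on $\sub_d(H)$. Reassembling over the base, $\mu$ itself is supported on $\sub_d(H)$ — i.e. $\mu$ projects trivially to the complementary factor $H'$, which we throw into $G_{\mathcal T}$ — and we recurse on $H$, which has strictly fewer simple factors. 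In the rank-one-quotient case, $G$ acts on the factor through $G\to G/G_2$ where $G_2$ is the product of all higher-rank factors, so writing $G=G_2\times G_3$ with $G_3$ a product of rank-one factors, the factor action has a rank-one group $G_3/(\text{something})$ acting; using that $\mu$ is a \emph{discrete} stationary random subgroup together with Lemma \ref{lem:discrete-trivial} and Corollary \ref{cor:st-fi}, I would argue that non-triviality of this rank-one factor action forces the projection of $\mu$ to some rank-one simple factor to be non-discrete or to have non-trivial intersection, and in either case split that factor into $G_{\mathcal H}$ (if the projection is discrete and Zariski dense, by the Borel density argument for stationary subgroups) or handle it by recursion; this is the delicate bookkeeping step.

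The main obstacle I anticipate is the rank-one-quotient branch: Theorem \ref{thm-NZ} only gives a \emph{factor} action through a rank-one group, not direct structural information about the stationary random subgroup, and rank-one groups genuinely admit non-invariant stationary random subgroups (Example \ref{exam:rank-1}), so one cannot hope to eliminate this case — one must instead show it produces exactly the $G_{\mathcal H}$ part, with discrete Zariski-dense projection to each rank-one simple factor. The tool for this is Theorem \ref{prop:rank-one}: the factor action, being non-free (its stabilizers contain the $\mu$-random subgroup's image, which is infinite by Corollary \ref{cor:st-fi}/Lemma \ref{lem-StatioFin}) and non-trivial, either has discrete Zariski-dense stabilizers almost surely — which is what we want for $G_{\mathcal H}$ — or maps further onto $(G/P,\mu_P)$ for the rank-one factor, which loops us back into an analysis of the minimal-parabolic case one factor down. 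Finally, once the recursion bottoms out, a clean-up is needed: in the invariant branch, Theorem \ref{thm:decomposition} allows $\mu$-irreducible factors of rank one, which by definition must go into $G_{\mathcal H}$ rather than $G_{\mathcal I}$; since a rank-one ergodic IRS that is Zariski dense is automatically discrete by the Borel density theorem for IRS \cite{7s,GL}, such a factor does satisfy the $G_{\mathcal H}$ requirement (discrete projection, Zariski-dense intersection), so condition (1) restricting $G_{\mathcal I}$ to rank $\ge 2$ irreducible factors is met. The ``furthermore'' clause about Zariski density of intersections follows from Corollary \ref{cor:irreducibility} on the $G_{\mathcal I}$ part and from the Borel density theorem on each rank-one simple factor of $G_{\mathcal H}$, noting that a non-discrete Zariski-dense projection combined with discreteness of the projection to a simple rank-one factor forces, via Lemma \ref{lem:discrete-trivial} applied to that factor versus its complement, the intersection with that factor to be non-trivial, hence — being normalized by a Zariski-dense subgroup — Zariski dense.
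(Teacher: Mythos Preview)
Your overall architecture---induction on the number of simple factors, Nevo--Zimmer trichotomy, Theorem~\ref{thm:decomposition}/Corollary~\ref{cor:irreducibility} in the invariant branch, Lemma~\ref{lem-PIRS} in the parabolic branch, Theorem~\ref{prop:rank-one} in the rank-one branch---matches the paper. But the parabolic branch as you wrote it has a genuine gap, and the rank-one branch is too vague in a way that hides the actual mechanism.

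\medskip

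\textbf{Parabolic branch.} You claim that disintegrating $\mu$ over the factor map $\pi\colon(\sub_d(G),\mu)\to(G/Q,\nu_Q)$ gives a $Q$-invariant (hence $P$-invariant) fibre measure over the basepoint. This is false: the base measure $\nu_Q$ is only $\nu$-stationary, not $G$-invariant, so there is no reason for the fibres of a stationary measure to be invariant under the isotropy. The paper does \emph{not} disintegrate over $G/Q$. Instead it uses the Poisson boundary: since $(G/P,\mu_P)$ is the Poisson boundary for $(G,\nu)$, the stationary measure $\mu$ is the barycentre of a $G$-equivariant map $\kappa\colon G/P\to\mathcal P(\sub_d(G))$, and equivariance forces $\kappa(gP)$ to be $gPg^{-1}$-invariant. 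Composing with $\pi_*$ gives a $gPg^{-1}$-invariant probability measure on $G/Q$, which must be $\delta_{gQ}$; hence $\kappa(gP)$ is supported on $\pi^{-1}(gQ)$. Now the second point you miss: why are these random subgroups contained in $gQg^{-1}$? Not via Lemma~\ref{lem-AIRS} as you suggest, but by comparing stabilizers---since $\pi$ is $G$-equivariant, $\Lambda\subset N_G(\Lambda)=\mathrm{Stab}_G(\Lambda)\subset\mathrm{Stab}_G(\pi(\Lambda))=gQg^{-1}$. Only then does Lemma~\ref{lem-PIRS} apply to the $P$-invariant measure $\kappa(P)$ on $\sub_d(Q)$, yielding a proper semisimple factor $H\lhd G$ with $\mu$ supported on $\sub_d(H)$.

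\medskip

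\textbf{Rank-one branch.} Your description conflates stabilizers of the Nevo--Zimmer factor with projections of the random subgroup and never says what you recurse on. The paper's argument is clean: once Theorem~\ref{prop:rank-one} (applied to the rank-one factor, with the $G/P$ alternative excluded since we assumed no parabolic factor) gives that the projection to some rank-one $G_1$ is discrete and Zariski dense, you apply the inductive hypothesis not to a factor action but to the \emph{intersection} $\Lambda\cap H$ with $H=G_2\times\cdots\times G_n$, which is a discrete $\nu_H$-stationary random subgroup by Proposition~\ref{prop:BS}. This gives $H=H_{\mathcal I}\times H_{\mathcal H}\times H_{\mathcal T}$; Zariski density of the intersection with $H_{\mathcal I}\times H_{\mathcal H}$ then forces the projection of $\Lambda$ there to be discrete, Lemma~\ref{lem:discrete-trivial} kills the $H_{\mathcal T}$-projection, and you set $G_{\mathcal I}=H_{\mathcal I}$, $G_{\mathcal H}=G_1\times H_{\mathcal H}$, $G_{\mathcal T}=H_{\mathcal T}$.
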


By the irreducible factors of $G_\mathcal{I}$ we mean the irreducible factors associated with the decomposition of IRS as in Theorem \ref{thm:decomposition}. The subscripts $\mathcal{I}, \mathcal{H},\mathcal{T}$ stands for invariant, hyperbolic and trivial (respectively).

\begin{proof}[Proof of Theorem \ref{thm:stationary-decomposition}]
We will argue by induction on the number of simple factors of $G$. If $G$ is simple 
then we may suppose $\text{rank}(G)\ge 2$ since for rank one groups the statement trivially holds. 
If $\nu$ is invariant then it is either trivial or Zariski dense by the Borel density theorem for IRS. 

Suppose by way of contradiction that $G$ is simple of higher rank and $\nu$ is not invariant. 
Let $\pi\colon (\sub(G),\nu)\to (G/Q,\nu_Q)$ be the measure preserving $G$-equivariant map afforded by Theorem \ref{thm-NZ}. Let $P\subset Q$ be a minimal parabolic. 

Let $\mathcal P(\sub(G))$ be the space of Borel probability measures on $\sub(G)$. 
Consider the map $\psi\colon G\to \mathcal P(\sub(G))$ given by $\psi(g):=g_* \nu$. Then, $\psi$ is a $G$-equivariant, bounded $\mu$-harmonic function on $G$. The pair $(G/P,\nu_P)$ is the Poisson boundary, so there exists a unique measurable function $\kappa\colon (G/P,\nu_P)\to \mathcal P(\sub(G))$ such that $\nu=\int_{G/P}\kappa(gP)d\nu_P(gP).$
The uniqueness implies that the map is $G$-equivariant. This also implies that $\kappa(gP)$ is $gPg^{-1}$ invariant $\nu_{P}$-almost surely. 

Consider the composition $\pi_*\circ \kappa\colon (G/P,\nu_p)\to \mathcal P (G/Q).$ This map is $G$-equivariant so the measure $\pi_*\circ \kappa(gP)$ is $gPg^{-1}$-invariant almost surely. We claim that the unique probability measure on $G/Q$ with this property is $\delta_{gQ}$. To justify that let use prove it in the case $g=1$. Let $L$ be a Levi subgroup of $Q$ and let $W_L$ the Weyl group of $L$. Let $N$ be the unipotent radical of $P$. By the Bruhat decomposition, the orbits of $N$ on $G/Q$ correspond to the cosets $W/W_L$. The orbit corresponding to $w W_L$ is measurably isomorphic to $N/(N\cap Q^w)$. For all cosets except the trivial one, $N\cap Q^w\neq N$. Since $N$ is nilpotent, this means that $N/[N,N](N\cap Q^w)$ is also non-trivial. The action of $N$ on the last quotient is just by translations on some power of $\mathbb R$ so obviously it does not support any invariant probability measures. It follows that only the trivial orbit can support a $P$-invariant measure. We deduce that $\pi\circ \kappa(gP)=gQ$ almost surely. 

By comparing the stabilizers we deduce that for $\nu_P$ almost every $gP\in G/P$, the measure $\kappa(gP)$ is supported on the set of discrete subgroups of $gQg^{-1}$. The measure $\kappa(gP)$ is $gPg^{-1}$-invariant, so it is the distribution of a $gPg^{-1}$-invariant random discrete subgroup of $gQg^{-1}$. By Lemma \ref{lem-PIRS} and simplicity of $G$ we get that $\kappa(gP)=\delta_{\{1\}}$ almost surely. Since $\nu=\int_{G/P} \kappa(gP) d\nu_P(gP)$ we deduce that $\nu=\gd_{\{1\}}$, a contradiction. This completes the proof of the theorem when $G$ is simple.

Suppose now that $G=\prod_{i=1}^nG_i$ has $n>1$ factors. 
In view of Corollary \ref{cor:same-stationary} we may suppose that $\mu=\mu_1\times\cdots \times\mu_n$ where $\mu_i$ is a probability measure on $G_i$.
If $\nu$ is invariant the theorem follows from Theorem \ref{thm:decomposition} and Corollary \ref{cor:irreducibility}. If $\nu$ admits a parabolic factor $(G/Q,\nu_Q)$ then arguing as in the simple case above we deduce from Lemma  \ref{lem-PIRS} that $\nu$ is supported on discrete subgroups of $H$ for some proper semisimple factor $H\lhd G$. Then we deduce the result from the induction hypothesis. 

Thus we are left with the case where $\nu$ is not invariant and does not admit a $G/Q$-factor. Then it follows from Theorem \ref{thm-NZ} and Theorem \ref{lem-Discrete} that for some rank one factor, say, $G_1$, the projection of a $\nu$-random subgroup is almost surely discrete and Zariski dense in $G_1$. Consider the intersection with $H=G_2\times\ldots\times G_n$. Since the intersection is a $\mu_2*\cdots *\mu_n$ stationary random subgroup it follows from the induction hypothesis that there is a decomposition $H=H_\mathcal{I}\times H_\mathcal{H}\times H_\mathcal{T}$ as in the statement of the theorem. 
Let $\gD$ denote the $\nu$-random subgroup.
Since the intersection $\gD\cap (H_\mathcal{I}\times H_\mathcal{H})$ is almost surely Zariski dense in $H_\mathcal{I}\times H_\mathcal{H}$ and normalized by the projection of $\gD$ to $H_\mathcal{I}\times H_\mathcal{H}$, it follows that this projection is discrete as well. Thus the projection of a $\nu$-random subgroup to $G_1\times H_\mathcal{I}\times H_\mathcal{H}$ is discrete almost surely. Applying Lemma \ref{lem:discrete-trivial} we deduce that a random subgroup projects trivially to $H_\mathcal{T}$. Furthermore, by the same reasoning we see that the projection of $\gD$ to every irreducible factor of $H_\mathcal{I}$ as well as to every simple factor of $H_\mathcal{H}$ is discrete. 
Finally, consider the intersection of the random subgroup with $G_1$. By Lemma \ref{lem:discrete-trivial} this intersection is non-trivial and since it is normalized by the projection to $G_1$ which by assumption is Zariski dense we deduce that the intersection with $G_1$ is also Zariski dense.
Thus our desired decomposition is given by 
$$
 G_\mathcal{I}=H_\mathcal{I},~G_\mathcal{H}=G_1\times H_\mathcal{H},~G_\mathcal{T}=H_\mathcal{T}.
$$
\end{proof}

We are now in a position to deduce:

\begin{thm}[Theorem \ref{thm:stiffness} of the introduction]\label{thm:stiff}
Let $G$ be a connected centre-free semisimple Lie group without compact factors and real rank at least two. Let $\nu$ be a $\mu$-stationary measure on $\sub_d(G)$. Suppose that $\nu$-almost every subgroup intersects trivially every rank one factor of $G$.
Then $\nu$ is invariant. 
\end{thm}

\begin{proof}[Proof of Theorem \ref{thm:stiffness}]
The assumption on the rank one factors implies that in the decomposition of $G$ according to $\nu$ given by Theorem \ref{thm:stationary-decomposition} the factor $G_\mathcal{H}$ is trivial.
\end{proof}

\begin{rem}
The assumption that for every rank one factor the intersection is trivial can be replaced by the assumption that the intersection is not Zariski dense in that factor almost surely. This applies also to Theorem \ref{thm:S-SZ} below.
\end{rem}

We remark that the higher rank assumption in Theorem \ref{thm:stiffness} is necessary.
The following construction demonstrates how to produce a stationary non-invariant measure on the space of discrete subgroups for certain rank one simple groups:\footnote{Nevo and Zimmer constructed stationary actions of rank one groups on compact spaces which admit no invariant probability measure \cite[Theorem B]{NZ99}. However in their construction, the pushforward of every stationary measure to the space of discrete subgroups via the stabilizer map collapses to an invariant measure on $\sub_d(G)$.}

\begin{exam}\label{exam:rank-1}

Denote by $T=\text{Cay}(F_2,\{a_1,a_2\})$ the Cayley graph of the free group $F_2$ with respect to a free basis $\{a_1,a_2\}$. Then $T$ is a decorated $4$-regular tree. Let $\zeta\in\partial T$ be a point at the visual boundary  and let $X_\zeta$ be the grandfather graph associated to $\zeta$. That is, $X_\zeta$ is obtained from $T$ by adding edges between any two points of distance $2$ belonging to a ray converging to $\zeta$. Observe that $X_\zeta$ is a $14$-regular graph obtained from $T$ by adding two disjoint $10$-regular trees. By adding proper labelings to the new edges we may decorate $X_\zeta$ as a Schreier graph of $F_7$ modulo some subgroup $H$, extending the given decoration of $T$ (as a Cayley graph of $F_2\le F_7$). Observe also that the corresponding subgroup $H$ is confined in $F_7$.
Let $\mathcal{C}$ denote the closure of the conjugacy class of $H$ in $F_7$. It is easy to see that $\mathcal{C}$ consists of groups whose Schreier graph is also a decoration of  $X_\zeta$, for some $\zeta\in\partial T$,  extending the given decoration of $T$. Since $X_\zeta$ `remembers' $\zeta$, we obtain a canonical continuous map $\phi:\mathcal{C}\to \partial (T)$ sending $H'\in \mathcal{C}$ to the point at infinity associated to the grandfather graph obtained by forgetting the labels of the Schreier graph of $F_7/H'$. Given an invariant random subgroup of $F_7$ supported on $\mathcal{C}$, the push-forward $\phi_*\nu$ would be a measure on $\partial (T)$ which is (in particular) $F_2$-invariant. Since $F_2$ acts minimally and proximally on its boundary, no such measure exists. It follows that the closed $F_7$ invariant set $\mathcal{C}$ supports no invariant probability measure. As pointed out to us by one of the referees, similar constructions were given also by Glasner and Weiss (see \cite[Example 3.3]{GW}). 

Similarly let $\gC$ be a surface group of genus $7$, let $f:\gC\to F_7$ be a surjective homomorphism and set $\gL=f^{-1}(H)$. It follows that there is no IRS of $\gC$ supported on the closure of the conjugacy class of $\gL$.
Realising $\gC$ as a uniform lattice in $\SL(2,\BR)$, one can show that the closure of the conjugacy class of $\gL$ in $\sub(\SL(2,\BR))$ supports no invariant probability measures. To see this note that the homogeneous space $SL(2,\BR)/\gC$ is an invariant factor, and the disintegration of the measure would produce invariant measures on the fibres, a contradiction (cf. \cite[Lemma 6.1]{NZ99}). In particular, every weak-$*$ limit of the Cesaro averages 
$\frac{1}{n}\sum_{i=0}^{n-1}\mu_G^{(i)}*\gd_{\gL}$
is a stationary measure on $\sub_d(\SL(2,\BR))$ which cannot be invariant. Similar constructions can be made for every rank one group that admits a lattice which projects on $F_2$.

%
\end{exam}


\section{Stuck--Zimmer theorem for stationary measures}
 

The decomposition result, Theorem \ref{thm:stationary-decomposition}, allows to extend the celebrated Stuck--Zimmer theorem to stationary measure with discrete stabilizers.

\begin{thm}[Theorem \ref{thm:stationary-Stuck--Zimmer} of the introduction]\label{thm:S-SZ}
Let $G$ be a connected centre-free semisimple Lie group without compact factors. Suppose that $G$ has real rank at least $2$ and Kazhdan's property (T). Let $\nu$ be an ergodic $\mu$-stationary measure on $\sub_d(G)$. Suppose that $\nu$-almost every random subgroup intersects trivially every rank one factor of $G$. Then there is a semisimple factor $H\lhd G$ and a lattice $\gC\le H$ such that $\nu=\nu_\gC$. 
\end{thm}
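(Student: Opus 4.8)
The plan is to reduce the stationary St\"{u}ck--Zimmer statement to the classical one by first passing through the decomposition Theorem \ref{thm:stationary-decomposition}. Since $\mu$ is assumed ergodic and $\mu$-almost every random subgroup meets every rank one factor trivially, Theorem \ref{thm:stationary-decomposition} (exactly as in the proof of Theorem \ref{thm:stiffness}) forces the factor $G_\mathcal{H}$ to be trivial, hence $\mu$ is in fact an \emph{invariant} random subgroup, supported on $\sub_d(G)$. Moreover the ``furthermore'' clause of Theorem \ref{thm:stationary-decomposition}, together with Theorem \ref{thm:decomposition} and Corollary \ref{cor:irreducibility}, gives a decomposition $G = G_\mathcal{I} \times G_\mathcal{T}$ with $\mu$ projecting trivially to $G_\mathcal{T}$ and projecting to an ergodic IRS on $G_\mathcal{I}$ whose $\mu$-irreducible factors $H_1,\dots,H_k$ all have rank $\ge 2$, and the intersection of a random subgroup with each $H_j$ is a nontrivial Zariski dense discrete IRS of $H_j$. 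Because $G$ has property (T), so does $G_\mathcal{I}$ and each $H_j$.

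Next I would invoke the St\"{u}ck--Zimmer theorem in the form: an ergodic IRS of a higher rank semisimple group with property (T) which has no atom at a normal subgroup is the IRS associated to a lattice. Concretely, work factor by factor. For a single $\mu$-irreducible factor $H_j$ with $\rank(H_j)\ge 2$ and property (T), the IRS $\overline\mu_j$ obtained by intersecting with $H_j$ is irreducible (in the sense that it projects densely to every proper factor), discrete, nontrivial, and Zariski dense; by St\"{u}ck--Zimmer such an IRS is either $\delta_{\{1\}}$ (excluded by nontriviality) or equals $\mu_{\gC_j}$ for some lattice $\gC_j \le H_j$. Assembling over $j$ and using that $\overline\mu = \prod_j \overline\mu_j$ while $\mu$ itself is squeezed between $\overline\mu$ (the intersection with $G_\mathcal{I}$) and the projection to $G_\mathcal{I}$, one gets that $\mu$ is the IRS of the lattice $\gC = \prod_j \gC_j$ inside $H := G_\mathcal{I} = \prod_j H_j$, viewed as a semisimple factor of $G$; that is, $\mu = \mu_\gC$ in the notation fixed after the statement of Theorem \ref{thm:stationary-Stuck--Zimmer}.

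There is one subtlety worth spelling out: after Theorem \ref{thm:stationary-decomposition} the projection of $\mu$ to $G_\mathcal{I}$ is an IRS, but a priori $\mu$ could differ from the product of its projection-to-$G_\mathcal{I}$ and intersection-with-$G_\mathcal{I}$ data. However, once we know the intersection with each $H_j$ is a lattice $\gC_j$, the projection of a random subgroup to $H_j$ contains this lattice and is discrete (by the decomposition's discreteness statement) and Zariski dense, hence equals a lattice containing $\gC_j$; ergodicity and the $G$-invariance (once $G_\mathcal{H}$ is trivial, $\mu$ is invariant) then pin down the projection to be $\gC_j$ as well, so intersection equals projection on $H$ and $\mu$ is genuinely supported on conjugates of $\gC = \prod\gC_j \le H$. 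The intersection with $G_\mathcal{T}$ is trivial, so no further factor appears.

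The main obstacle I expect is the bookkeeping around reconstructing $\mu$ on $G$ from the IRS data on the irreducible factors — i.e.\ verifying carefully that ``intersection with $H_j$ is a lattice'' plus ``discrete Zariski dense projection'' genuinely forces $\mu = \mu_\gC$ and not merely that $\mu$ fibers over $G/H$ — and, secondarily, citing St\"{u}ck--Zimmer in precisely the right generality (ergodic IRS, property (T), higher rank, no atom at a normal subgroup). Everything else is a direct consequence of Theorem \ref{thm:stationary-decomposition}, Theorem \ref{thm:decomposition}, Corollary \ref{cor:irreducibility}, and Lemma \ref{lem:discrete-trivial}.
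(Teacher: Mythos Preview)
Your approach is correct and is essentially the same as the paper's. The paper's proof is two lines: once Theorem \ref{thm:stationary-decomposition} kills $G_\mathcal{H}$ (so $G=G_\mathcal{I}\times G_\mathcal{T}$ and $\mu$ is an IRS supported on $\sub_d(G_\mathcal{I})$), it invokes Proposition \ref{prop:IRS-lattices}, whose content is precisely the factor-by-factor St\"{u}ck--Zimmer argument you spell out --- it is stated as a ``straightforward conclusion'' of Theorem \ref{thm:decomposition}, Corollary \ref{cor:irreducibility}, Corollary \ref{cor:properly-ergodic}, and the classical St\"{u}ck--Zimmer theorem. Your ``bookkeeping'' worry about reassembling $\mu$ from the $\overline\mu_j$ is exactly what the paper addresses in Remark \ref{rem:commensurable} (the three IRS $\mu,\tilde\mu,\overline\mu$ are commensurable, so once each $\overline\mu_j$ is a lattice IRS, so is $\mu$). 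The one ingredient you use implicitly but don't name is proper ergodicity of $\overline\mu_j$ with respect to each simple factor of $H_j$: this is needed for the form of St\"{u}ck--Zimmer quoted in the paper (which requires every rank one factor to act ergodically), and is supplied by Corollary \ref{cor:properly-ergodic}, which in turn follows from the irreducibility you already established via Lemma \ref{lem:G1dense->G2-Invartant}.
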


We will rely on the following proper ergodicity result:

\begin{lem}\label{lem:G1dense->G2-Invartant} 
Let $G=G_1\times G_2$ where $G_1,G_2$ are locally compact second countable groups. Let $X$ be a compact, second countable $G$-space with $G_1$-invariant probability measure $\nu$ such that ${\rm Stab}_G (x)$ is discrete and has a dense projection onto $G_2$ for $\nu$-almost every $x\in X$. Then $\nu$ is $G$-invariant. 
\end{lem}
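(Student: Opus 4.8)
\textbf{Proof proposal for Lemma \ref{lem:G1dense->G2-Invartant}.}

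The plan is to upgrade the given $G_1$-invariance of $\mu$ to full $G$-invariance by exploiting the discreteness of the stabilizers together with the density of their projection to $G_2$. First I would pass to the associated space of subgroups: the stabilizer map $x\mapsto {\rm Stab}_G(x)$ pushes $\mu$ forward to a probability measure $\bar\mu$ on $\sub_d(G)$ which, by hypothesis, is $G_1$-invariant. Since the stabilizer of $x$ projects densely to $G_2$, and since for a discrete subgroup $\Delta\le G$ the intersection $\Delta\cap G_2$ is normalized by the (dense) projection of $\Delta$ to $G_2$, that intersection is a discrete normal subgroup of a dense subgroup of $\{1\}\times G_2$; hence $\Delta\cap G_2$ is central in $G_2$, and as $G_2$ is centre-free (or after reducing modulo the center) it is trivial. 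Thus $\mu$-almost surely ${\rm Stab}_G(x)$ projects \emph{injectively} to $G_2$, i.e.\ it is the graph of a (possibly partially defined, but in any case densely defined) homomorphism from a dense subgroup of $G_2$ into $G_1$.

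Next I would use this graph structure to run an averaging argument over $G_2$. Fix $g_2\in G_2$ and pick, using density of the projection, elements $\delta=(\delta_1,\delta_2)\in{\rm Stab}_G(x)$ with $\delta_2$ close to $g_2$; then $\delta_2\cdot x=(\delta_2^{-1},\ldots)$ acting — more precisely, $(\,1,\delta_2)\cdot x=(\delta_1^{-1},1)\cdot x$, so the $G_2$-translate of $x$ by $\delta_2$ agrees with a $G_1$-translate of $x$. Because $\mu$ is $G_1$-invariant, $(\delta_1^{-1},1)_*\mu=\mu$; one wants to conclude $(1,\delta_2)_*\mu=\mu$ for a dense set of $\delta_2$, hence for all $g_2$ by weak-$*$ continuity of the $G$-action on $\mathrm{Prob}(X)$. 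The subtlety is that the element $\delta_1$ depends on the point $x$, so $(1,\delta_2)$ does not act as a single $G_1$-element globally; the identity $(1,\delta_2)x=(\delta_1(x)^{-1},1)x$ is only pointwise. To make the averaging rigorous I would instead consider, for a fixed $\delta=(\delta_1,\delta_2)\in G$, the set $A_\delta=\{x: \delta\in{\rm Stab}_G(x)\}$ and note $\mu$ is $\delta$-invariant as well as $G_1$-invariant; letting $\delta$ range over a countable dense (in the projection) subgroup of the union of stabilizers and combining with $G_1$-invariance should generate a dense subgroup of $G$ preserving $\mu$.

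A cleaner route, which I would actually pursue, is to disintegrate $\mu=\int_{\sub_d(G)}m_\Delta\, d\bar\mu(\Delta)$ over the stabilizer map, where $m_\Delta$ is supported on the $G$-invariant (since stabilizers are constant there) fibre $X_\Delta=\{x:{\rm Stab}_G(x)=\Delta\}$; this fibre is a $N_G(\Delta)$-space, and since $\Delta$ is a graph over a dense subgroup $\Gamma_2\le G_2$, the group $G$ acts on $X_\Delta$ with $\Delta$ acting trivially, so $X_\Delta$ is essentially a homogeneous space for $G/\!\!\sim$ of a specific form. The key structural point is then: a discrete $G_1$-invariant measure whose stabilizers project densely to $G_2$ forces, via the Howe--Moore property or simply via the normality argument above, the orbit to be a \emph{lattice orbit} — i.e.\ $\Delta$ is a lattice in $G$ — and then $G_1$-invariance of a measure on $G/\Delta$ together with ergodicity of the $G_1$-action (mixing of $G_1$ on $G/\Delta$ when $\Delta$ is irreducible, which holds here since the projection to $G_2$ is dense) shows the measure is the uniform one, which is $G$-invariant. \textbf{The main obstacle} I anticipate is the pointwise-versus-global nature of the substitution $(1,\delta_2)x=(\delta_1(x)^{-1},1)x$: turning this cocycle-type identity into a genuine invariance statement for the measure requires either a careful Fubini/measurable-selection argument over the stabilizer map or an appeal to the fact (provable from discreteness plus density of the projection) that a.e.\ stabilizer is in fact a lattice in $G$ so that one is simply on a homogeneous space $G/\Delta$ with an ergodic $G_1$-action.
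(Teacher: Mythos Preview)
Your proposal has a genuine gap. You repeatedly invoke structural properties that are not in the hypotheses: the lemma only assumes $G_1,G_2$ are locally compact second countable, so there is no centre-freeness of $G_2$, no Howe--Moore property, and no reason whatsoever that a discrete subgroup of $G$ with dense projection to $G_2$ should be a lattice in $G$. Your ``cleaner route'' therefore collapses, since both the graph argument (needing $G_2$ centre-free) and the endgame (needing $\Delta$ a lattice and $G_1$ to act ergodically on $G/\Delta$ via mixing) rely on hypotheses you do not have. You correctly identify the real obstacle --- the substitution $(1,\delta_2)x=(\delta_1(x)^{-1},1)x$ is only pointwise, with $\delta_1$ depending on $x$ --- but none of your suggested fixes resolves it.

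The paper's proof bypasses the disintegration entirely and stays on $X$. After reducing to the $G_1$-ergodic case, it fixes a smooth symmetric probability measure $\eta$ on $G_1$ and applies the pointwise ergodic theorem for the random walk driven by each conjugate $\eta^{g_1}$; by Fubini there is a full-measure set $X'$ such that for $x\in X'$ the Ces\`aro averages $\tfrac{1}{n}\sum_{i<n}(\eta^{g_1})^{*i}*\delta_x$ converge weakly to $\mu$ for almost every $g_1$. Now pick a single $x\in X'$ with $\Gamma=\mathrm{Stab}_G(x)$ discrete and projecting densely to $G_2$, and choose one $g_1$ such that this convergence holds simultaneously for $g_1\gamma_1$ for every $\gamma=(\gamma_1,\gamma_2)\in\Gamma$ (possible since $\Gamma$ is countable). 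Using $\eta^{\gamma g_1}=\eta^{\gamma_1 g_1}$ and $\gamma^{-1}x=x$, a two-line computation gives $\gamma_*\mu=\mu$ for every $\gamma\in\Gamma$. Hence $\mathrm{Stab}_G(\mu)\supset \Gamma G_1$, and since this set has closure $G$ (density of the projection to $G_2$) and the stabilizer of a measure is closed, $\mu$ is $G$-invariant. This is exactly the ``turning the cocycle-type identity into genuine invariance'' step you were missing: it is achieved not by measurable selection or by proving $\Gamma$ is a lattice, but by anchoring everything at a \emph{single} carefully chosen point $x$ and a single $g_1$, so that the pointwise identity becomes an identity of limits of measures.
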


\begin{proof}
In view of the ergodic decomposition of probability measure preserving actions, the lemma clearly reduces to the ergodic case, so let us assume that $\nu$ is $G_1$-ergodic. Using the large stabilizers of the action and Kakutani's ergodic theorem for random walks we will prove that such measure must be also $G_2$-invariant.

Choose a smooth symmetric probability measure $\eta$ on $G_1$ whose support generates $G_1$. Write $\eta^{g_1}$ for the measure $\eta^{g_1}(A)=\eta(g_1^{-1}A g_1)$. Since the support of $\eta^{g_1}$ generates $G_1$, the measure $\nu$ is $\nu^{g_1}$ stationary and ergodic with respect to the random walk on $X$ induced by $\eta^{g_1}$, for every $g_1\in G_1$. We say that a point $x\in X$ is $\eta^{g_1}$-generic for $\nu$ if
\begin{equation}\label{eq-Birkhoff}
\lim_{n\to\infty}\frac{1}{n} \sum_{i=1}^{n-1} \int f(g x)d(\eta^{g_1})^{\ast i}(g)=\lim_{n\to\infty}\frac{1}{n} \sum_{i=1}^{n-1} \int f(g_1^{-1}g g_1 x)d\eta^{\ast i}(g)=\int f d\nu(x),
\end{equation} for all continuous functions $f$. By Kakutani's pointwise ergodic theorem for random walks \cite{Kak51}, the set of $\eta^{g_1}$-generic points has full measure with respect to $\nu$. It follows that the set \[\{ (g_1,x)| x \textrm{ is not } \eta^{g_1} \textrm{-generic}\}\subset G_1\times X\] has zero measure. By Fubini's theorem we deduce that there is a subset $X'\subset X$ with $\nu(X')=1$, such that every $x\in X'$ satisfies the following property. For almost every $g_1\in G_1$  
\begin{equation}\label{eq-ErgAv}
\lim_{n\to\infty}\frac{1}{n} \sum_{i=1}^{n-1}  (\eta^{g_1})^{\ast i}\delta_{x}=\nu.
\end{equation} in the weak-* topology. 

Take a point $x\in X'$ such that $\Gamma:={\rm Stab}_G(x)$ is discrete and has a dense projection onto $G_2$. Since $\Gamma$ is countable, we may fix  $g_1\in G$ such that (\ref{eq-ErgAv}) holds for $g_1\gamma_1$, for every $\gamma=(\gamma_1,\gamma_2)\in \Gamma$. Note that $\eta^{\gamma g}=\eta^{\gamma_1g_1}$. Comparing (\ref{eq-ErgAv}) for $g_1$ and $\gamma_1 g_1$ and using the fact that $\gamma^{-1}x=x$ we find that \begin{align*}\nu=&\lim_{n\to\infty}\frac{1}{n} \sum_{i=1}^{n-1}  (\eta^{\gamma_1g_1})^{\ast i}\delta_{x}=\lim_{n\to\infty}\frac{1}{n} \sum_{i=1}^{n-1}  (\eta^{\gamma g_1})^{\ast i}\delta_{x}\\=&\lim_{n\to\infty}\frac{1}{n} \sum_{i=1}^{n-1}  \gamma_* (\eta^{g_1})^{\ast i}\delta_{\gamma x}=\gamma_*\lim_{n\to\infty}\frac{1}{n} \sum_{i=1}^{n-1}  (\eta^{g_1})^{\ast i}\delta_{x}=\gamma_*\nu.\end{align*} This means that $\Gamma G_1\subset {\rm Stab}_G \nu$. The action of $G$ on the set of probability measures on $X$ is continuous, so ${\rm Stab}_G \nu\supset \overline{\Gamma G_1}=G.$
\end{proof}

%

\begin{cor}\label{cor:ergodicity}
Let $G_1$ and $G_2$ be locally compact second countable groups.
Let $\nu$ be an ergodic discrete invariant random subgroup in $G_1\times G_2$ with dense projections to $G_2$. Then $\nu$ is $G_1$-ergodic.
\end{cor}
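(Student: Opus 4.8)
The plan is to reduce the statement to Lemma~\ref{lem:G1dense->G2-Invartant} by replacing the subgroups $\Lambda$ (which under conjugation have the normalizer, not $\Lambda$ itself, as stabilizer) by genuine point stabilizers of a measure preserving action. Concretely, realize the invariant random subgroup $\mu$ as the stabilizer distribution of a p.m.p.\ $G$-action: by the remark following Lemma~\ref{lem:discrete-trivial} (that is, by \cite[Theorem~2.6]{7s}, which applies since $\mu$ is an IRS supported on discrete, hence unimodular, subgroups) there is a compact second countable $G$-space $X$ carrying a $G$-invariant probability measure $m$ together with a $G$-equivariant stabilizer map ${\rm stab}\colon X\to\sub(G)$, $x\mapsto {\rm Stab}_G(x)$, such that ${\rm stab}_*m=\mu$. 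Since $\mu$-almost every subgroup is discrete with dense projection onto $G_2$, the set of $x\in X$ for which ${\rm Stab}_G(x)$ fails to be discrete with dense projection onto $G_2$ is the ${\rm stab}$-preimage of a $\mu$-null set, hence $m$-null; therefore this property is retained by any measure absolutely continuous with respect to $m$, in particular after conditioning $m$ on a $G_1$-invariant Borel set.

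Now suppose, towards a contradiction, that $\mu$ is not $G_1$-ergodic, and choose a $G_1$-invariant Borel set $E\subset\sub(G)$ with $0<\mu(E)<1$. Its preimage $\widetilde E:={\rm stab}^{-1}(E)\subset X$ is $G_1$-invariant with $m(\widetilde E)=\mu(E)\in(0,1)$, so the conditional measures $m_1:=m(\,\cdot\mid\widetilde E)$ and $m_2:=m(\,\cdot\mid X\setminus\widetilde E)$ are $G_1$-invariant probability measures on $X$, and by the previous paragraph ${\rm Stab}_G(x)$ is discrete with dense projection onto $G_2$ for $m_i$-almost every $x$. Applying Lemma~\ref{lem:G1dense->G2-Invartant} to $(X,m_1)$ and to $(X,m_2)$ shows that both $m_1$ and $m_2$ are $G$-invariant. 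Pushing forward by the $G$-equivariant map ${\rm stab}$, the measures $\mu_i:={\rm stab}_*m_i$ are $G$-invariant probability measures on $\sub(G)$, i.e.\ invariant random subgroups, with $\mu_1$ supported on $E$ and $\mu_2$ on $\sub(G)\setminus E$; in particular $\mu_1\ne\mu_2$, and $\mu=\mu(E)\,\mu_1+(1-\mu(E))\,\mu_2$.

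This exhibits $\mu$ as a nontrivial convex combination of two distinct invariant random subgroups, contradicting the fact that an ergodic $G$-invariant probability measure on the compact metrizable space $\sub(G)$ is an extreme point of the simplex of $G$-invariant probability measures (classical ergodic decomposition). Hence $\mu(E)\in\{0,1\}$ for every $G_1$-invariant Borel $E$, i.e.\ $\mu$ is $G_1$-ergodic. The step that does the real work is the passage to the stabilizer realization $(X,m)$, since it is exactly what makes the hypothesis of Lemma~\ref{lem:G1dense->G2-Invartant} available; a minor technical point is that we use the realization on a compact second countable space (this is part of \cite{7s}, and in any case the proof of Lemma~\ref{lem:G1dense->G2-Invartant} only needs $X$ to be a standard Borel $G$-space, the compactness there serving only for weak-$*$ precompactness). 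Alternatively one can avoid conditioning: choose $(X,m)$ to be $G$-ergodic (a $G$-ergodic component of any realization still realizes $\mu$, by uniqueness of the ergodic decomposition and $G$-ergodicity of $\mu$), take the $G_1$-ergodic decomposition of $m$, upgrade each component to a $G$-invariant --- hence $G$-ergodic --- measure via Lemma~\ref{lem:G1dense->G2-Invartant}, and conclude from uniqueness of the $G$-ergodic decomposition of $m$ that this decomposition is trivial.
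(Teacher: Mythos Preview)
Your proposal is correct and takes essentially the same approach as the paper: both realize $\mu$ as the stabilizer distribution of a p.m.p.\ action via \cite[Theorem~2.6]{7s} and then invoke Lemma~\ref{lem:G1dense->G2-Invartant}. The paper's argument is precisely your ``alternative'' route through the $G_1$-ergodic decomposition of $(X,m)$; your main argument via conditioning on a $G_1$-invariant set is an equivalent reformulation that has the minor advantage of not requiring $m$ itself to be $G$-ergodic on $X$.
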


\begin{proof}
Let $\nu$ be an ergodic discrete invariant random subgroup in $G_1\times G_2$ with dense projections to $G_2$.
In view of \cite[Theorem 2.6]{7s} $\nu$ can be realised as the stabilizer of some compact, second countable $G$-space $X$. Abusing notations we will denote the measure on $X$ by $\nu$ as well. Consider the ergodic decomposition of $(X,\nu)$ with respect to the $G_1$-action.
By Lemma \ref{lem:G1dense->G2-Invartant}  the $G_1$-ergodic components of $\nu$ are $G_2$-invariant. Since $\nu$ is $G_1\times G_2$-ergodic it follows that the decomposition is trivial. This means that $\nu$ is $G_1$-ergodic.
\end{proof}

Thus we have established:

\begin{cor}\label{cor:properly-ergodic}
Under the assumptions of Corollary \ref{cor:irreducibility}, the IRS $\overline{\nu}_j$ are irreducible, that is, $\overline{\nu}_j$ is ergodic with respect to every simple factor of $H_j$.
\end{cor}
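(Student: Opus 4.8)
The plan is to read off this corollary from Corollary \ref{cor:ergodicity} combined with the irreducibility already established in Corollary \ref{cor:irreducibility}. First I would record that each $\overline{\mu}_j$ is itself ergodic as an invariant random subgroup of $H_j$: since $H_j\lhd G$, the intersection map $\Lambda\mapsto \Lambda\cap H_j$ on $\sub(G)$ is equivariant for the conjugation actions, so $\overline{\mu}_j$ is a pushforward of the ergodic measure $\mu$ and is therefore ergodic for the $G$-action on $\sub(H_j)$; as the factors of $G$ complementary to $H_j$ centralize $H_j$, this $G$-action factors through $H_j$, so $\overline{\mu}_j$ is $H_j$-ergodic. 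If $H_j$ is simple this already is the assertion, so from now on I would assume $H_j$ is not simple.

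Next, fix an arbitrary simple factor $F$ of $H_j$ and write $H_j=F\times F'$, where $F'$ is the product of the remaining simple factors of $H_j$; note that $F'$ is a proper semisimple factor of $H_j$. By Corollary \ref{cor:irreducibility} the IRS $\overline{\mu}_j$ is irreducible, so in particular the projection of an $\overline{\mu}_j$-random subgroup to $F'$ is dense almost surely. Thus $\overline{\mu}_j$ is an ergodic discrete invariant random subgroup of $F\times F'$ whose projection onto $F'$ is dense, and Corollary \ref{cor:ergodicity} applied with $G_1=F$ and $G_2=F'$ gives that $\overline{\mu}_j$ is $F$-ergodic. Since $F$ was an arbitrary simple factor of $H_j$, this shows that $\overline{\mu}_j$ is properly ergodic, which is the claim.

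I do not expect any serious obstacle here: the argument is a direct assembly of the earlier results. The only two points that call for (routine) care are the passage from ergodicity of $\mu$ to ergodicity of the intersection measure $\overline{\mu}_j$, and the observation that in the definition of an irreducible IRS the phrase ``proper semisimple factor'' applies to the complement $F'$ of a single simple factor, so that the density of the projection to $F'$ needed to invoke Corollary \ref{cor:ergodicity} is indeed guaranteed.
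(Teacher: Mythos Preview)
Your argument is correct and is exactly the approach the paper intends: the paper presents Corollary~\ref{cor:properly-ergodic} with the phrase ``Thus we have established,'' meaning it is read off from Corollary~\ref{cor:ergodicity} applied to $H_j=F\times F'$ together with the irreducibility of $\overline{\mu}_j$ from Corollary~\ref{cor:irreducibility}. Your explicit verification that $\overline{\mu}_j$ is $H_j$-ergodic (as a $G$-equivariant pushforward of the ergodic $\mu$, with the $G$-action on $\sub(H_j)$ factoring through $H_j$) fills in the only detail the paper leaves implicit.
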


Recall the main result of \cite{SZ} (see also \cite[\S 4]{7s}):

 
\begin{thm}[Stuck--Zimmer \cite{SZ}]
Let $G$ be a centre-free semisimple Lie group of real rank at least $2$ and with Kazhdan's property $(T)$.
Suppose that $G$, as well as every rank one factor of $G$, acts ergodically and faithfully preserving a probability measure on a space $X$. Then there is a normal subgroup $N \lhd G$ and a lattice $\Gamma < N$ such that for almost every $x \in X$ the stabilizer of $x$ is conjugate to $\Gamma$.
\end{thm}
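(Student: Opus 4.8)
The plan is to translate the dynamical hypothesis into a statement about the invariant random subgroup (IRS) defined by the stabilizer map, and then to combine the decomposition machinery of \S\ref{sec:IRS} with cocycle superrigidity and property (T). Since $\mu$ is $G$-invariant and the stabilizer map $x\mapsto G_x=\mathrm{Stab}_G(x)$ is $G$-equivariant for the conjugation action on $\sub(G)$, its pushforward $\lambda$ is an IRS of $G$; passing to the $G$-ergodic components of $(X,\mu)$ we may assume $\lambda$ is ergodic.

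First I would establish that the stabilizers are discrete. The assignment $x\mapsto \Lie(G_x^0)$ is a $G$-equivariant measurable map into $\bigsqcup_k \Gr_k(\Lie(G))$, and by ergodicity the dimension $k$ is almost surely constant; composing with the Pl\"ucker embedding exactly as in the proof of Theorem \ref{prop:rank-one} yields a $G$-equivariant map $X\to \BP^1(\wedge^k \Lie(G))$. Its pushforward is a $G$-invariant probability measure on a projective variety, so by a standard projective-measure argument (Furstenberg's lemma: a semisimple group without compact factors preserves no probability measure on a projective space unless it is supported on fixed points) the image is almost surely a $G$-fixed $k$-plane, i.e. $\Lie(G_x^0)$ is an ideal of $\Lie(G)$, constant by ergodicity. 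The corresponding connected normal subgroup lies in almost every stabilizer, hence in the kernel of the action; faithfulness forces it to be trivial, so $\lambda$ is a discrete IRS.

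Next I would reduce to irreducible factors. Using the Borel density theorem for IRS \cite{7s,GL} to pass to the Zariski dense case, Theorem \ref{thm:decomposition} furnishes a decomposition $G=H_1\times\cdots\times H_k$ along which $\lambda$ projects discretely, Corollary \ref{cor:irreducibility} shows that the intersection $\overline\lambda_j$ with each $H_j$ is a nontrivial, irreducible, discrete IRS, and Corollary \ref{cor:properly-ergodic} shows that each $\overline\lambda_j$ is properly ergodic (ergodic even for each simple factor of $H_j$). It therefore suffices to prove that a nontrivial irreducible discrete IRS of a factor $H$ — which is either of higher rank or a rank one group with property (T) acting ergodically — is atomic, i.e. equals $\mu_{\gC}$ for a lattice $\gC\le H$. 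The sought normal subgroup $N$ is then the product of those $H_j$ carrying a nontrivial IRS, and $\gC$ the corresponding product of lattices, so that almost every stabilizer is conjugate to $\gC$.

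The heart of the matter, and the step I expect to be the main obstacle, is showing that such an irreducible discrete IRS is a lattice; this is the essential content of \cite{SZ} and is where property (T) is indispensable. Realizing $\overline\lambda_j$ as a properly ergodic probability-measure-preserving $H$-action with discrete stabilizer $\gC_x$, I would invoke Zimmer's cocycle superrigidity for higher rank and property (T) groups: the natural cocycle into the algebraic hull of the action is cohomologous to a homomorphism, which together with proper ergodicity (preventing the cocycle from factoring through a proper subproduct) and property (T) (forbidding the nontrivial asymptotically invariant structure that a diffuse, non-transitive action would carry) forces the conditional measures to be supported on single orbits. A non-lattice discrete $\gC_x$ would produce an orbit $H/\gC_x$ of infinite $H$-invariant volume, incompatible with the finiteness of the conditional measure, so $\gC_x$ must be a lattice; Margulis' normal subgroup theorem applied to this lattice then pins down the IRS as atomic, equal to $\mu_{\gC}$. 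The genuine difficulty is precisely ruling out properly ergodic actions with discrete non-lattice stabilizers, which neither the Nevo--Zimmer factor theorem (vacuous for the already $G$-invariant $\mu$) nor the decomposition results of \S\ref{sec:IRS} resolve on their own, and which requires the superrigidity-plus-(T) mechanism of \cite{SZ}.
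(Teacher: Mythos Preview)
The paper does not prove this theorem; it is quoted as a black box from \cite{SZ} and then used to derive Proposition~\ref{prop:IRS-lattices}. So there is no ``paper's own proof'' to compare with, and the question is whether your sketch is an honest reconstruction of the St\"uck--Zimmer argument.

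Your preliminary reductions are correct and non-circular. The discreteness step via the map $x\mapsto\Lie(G_x^0)$, the Pl\"ucker embedding, and Furstenberg's lemma is exactly the standard argument (this is the Borel density theorem for IRS from \cite{7s,GL}, already invoked in \S\ref{sec:IRS}). The decomposition into irreducible factors via Theorem~\ref{thm:decomposition}, Corollary~\ref{cor:irreducibility} and Corollary~\ref{cor:properly-ergodic} is legitimate: those results are proved in the paper before the St\"uck--Zimmer theorem is invoked and do not depend on it.

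The gap is in the ``heart of the matter''. You correctly identify it as the entire content of \cite{SZ} and then offer a one-sentence mechanism --- ``the natural cocycle into the algebraic hull is cohomologous to a homomorphism, forcing conditional measures onto single orbits'' --- that is too vague to constitute a proof and does not match what actually happens in \cite{SZ}. The St\"uck--Zimmer argument does not run a cocycle superrigidity theorem on an unspecified cocycle; it passes to the factor of $(X,\mu)$ by the stabilizer relation, applies the Nevo--Zimmer intermediate factor theorem \cite{NZ-IFT} (itself a descendant of Margulis' factor theorem) to classify the $G$-factors, and then uses property~(T) to exclude a weakly mixing extension with nontrivial stabilizers. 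Your final paragraph essentially concedes this: you say the step ``requires the superrigidity-plus-(T) mechanism of \cite{SZ}'', which is a citation rather than an argument. So your proposal is a correct roadmap with the central room left empty --- acceptable if the intent is to indicate where the external input enters, but not a self-contained proof.
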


The following is a straightforward conclusion of the combination of Theorem \ref{thm:decomposition}, Corollary \ref{cor:irreducibility}, Corollary \ref{cor:properly-ergodic}, and the
Stuck Zimmer theorem:

\begin{prop}\label{prop:IRS-lattices}
Let $G=G_1\times\cdots\times G_n$ be a connected centre-free semisimple Lie group without compact factors. Suppose that $G$ has Kazhdan's property (T).
Let $\nu$ be an ergodic Zariski dense discrete invariant subgroup in $G$, and suppose that the projection of a $\nu$-random subgroup to every rank one simple factor of $G$ is non-discrete. Then $\nu$ is supported on lattices in $G$.
\end{prop}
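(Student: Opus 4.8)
\textbf{Proof proposal for Proposition \ref{prop:IRS-lattices}.}

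The plan is to reduce to the St\"{u}ck--Zimmer theorem via the decomposition machinery developed above. First I would invoke Theorem \ref{thm:decomposition} and Corollary \ref{cor:irreducibility} to write $G=H_1\times\cdots\times H_k$ as a product of $\mu$-irreducible factors, and to obtain the two associated invariant random subgroups: $\ti\mu=\prod_j\ti\mu_j$, the projection to the $H_j$, and $\overline\mu=\prod_j\overline\mu_j$, the intersection with the $H_j$. By Corollary \ref{cor:irreducibility} each $\overline\mu_j$ is a nontrivial irreducible discrete IRS of $H_j$. The hypothesis that the projection to every rank one simple factor of $G$ is non-discrete, combined with the fact that $\ti\mu_j$ is irreducible (so all its simple-factor projections are dense) whenever $H_j$ has more than one simple factor, forces that no $H_j$ can itself be a rank one simple group: if $H_j=G_i$ were a rank one simple factor then $\ti\mu_j=\overline\mu_j$ would be a discrete Zariski dense IRS in $G_i$, contradicting the assumption. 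Hence every irreducible factor $H_j$ either is higher rank simple or is a genuine product of at least two simple factors; in the latter case every rank one simple factor of $H_j$ receives a dense projection from $\overline\mu_j$ (since $\overline\mu_j$ is irreducible).

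Next I would verify the hypotheses of the St\"{u}ck--Zimmer theorem for $\overline\mu_j$ on $H_j$. Since $G$ has property (T), so does each $H_j$, and each $H_j$ has real rank at least $2$ (it is either higher rank simple, or a product of $\geq 2$ simple factors, at least one of which might be rank one, but the product still has rank $\geq 2$; in fact if $H_j$ contains a rank one simple factor it must contain another simple factor, and property (T) forces that factor to also have property (T), which is fine). Realize $\overline\mu_j$ as the stabilizer IRS of a p.m.p.\ action on a compact second countable $H_j$-space by \cite[Theorem 2.6]{7s}; irreducibility and Zariski density give faithfulness, and Corollary \ref{cor:properly-ergodic} gives that the action of $H_j$ as well as of every simple factor of $H_j$ is ergodic. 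Then the St\"{u}ck--Zimmer theorem applies: there is a normal subgroup $N_j\lhd H_j$ and a lattice $\gC_j<N_j$ such that $\overline\mu_j$ is almost surely conjugate to $\gC_j$. Because $\overline\mu_j$ is Zariski dense in $H_j$ (again by irreducibility and the Borel density theorem for IRS, or directly from Corollary \ref{cor:irreducibility}), we must have $N_j=H_j$, so $\overline\mu_j=\mu_{\gC_j}$ for a lattice $\gC_j$ in $H_j$.

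Finally I would reassemble. The product $\gC:=\prod_j\gC_j$ is a lattice in $G=\prod_j H_j$, and $\overline\mu=\prod_j\overline\mu_j=\mu_\gC$ is the invariant random subgroup given by conjugates of $\gC$. It remains to deduce that $\mu$ itself (not just its intersection with the factors) is supported on lattices in $G$. Here one uses that $\mu$ sits between $\overline\mu$ and $\ti\mu$: a $\mu$-random subgroup $\gD$ contains its intersection $\overline\gD$ with each $H_j$ with finite index in the projection $\ti\gD_j$ --- indeed $\overline\gD_j$ is a lattice in $H_j$ and is normalized by $\ti\gD_j$, and a discrete group normalizing a lattice is commensurable to it, hence $\ti\gD_j$ is also a lattice in $H_j$; thus $\gD\subset\prod_j\ti\gD_j$ is a discrete subgroup containing the lattice $\prod_j\overline\gD_j$, so $\gD$ is itself a lattice in $G$. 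Hence $\mu$ is supported on lattices in $G$, as claimed.

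\textbf{Main obstacle.} The step I expect to require the most care is the passage from ``$\mu$ restricted to the factors is a lattice-IRS'' to ``$\mu$ is supported on lattices'': one must control the projection $\ti\mu_j$ and verify that a discrete subgroup squeezed between a lattice $\overline\gD_j$ (which is normal in it) and the ambient $H_j$ is forced to be a lattice --- this uses that the normalizer of a lattice in a semisimple group without compact factors is itself discrete (equivalently, a lattice), which is where irreducibility of $\overline\mu_j$ and Zariski density are genuinely used. The rest is essentially bookkeeping with the decomposition theorems already established.
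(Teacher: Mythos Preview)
Your proposal is correct and follows exactly the route the paper indicates: decompose $G$ into $\mu$-irreducible factors via Theorem \ref{thm:decomposition} and Corollary \ref{cor:irreducibility}, use the non-discreteness hypothesis on rank one factors to ensure each $H_j$ has rank $\ge 2$, invoke Corollary \ref{cor:properly-ergodic} for proper ergodicity, and then apply the St\"{u}ck--Zimmer theorem to each $\overline\mu_j$. Your reassembly step (passing from $\overline\mu$ to $\mu$ via the normalizer-of-a-lattice argument) is precisely the content of Remark \ref{rem:commensurable}, which the paper records immediately after the proposition but does not spell out; you have supplied the details the paper leaves implicit.
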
 

\begin{rem}\label{rem:commensurable}
Note that in the context of Proposition \ref{prop:IRS-lattices} the three invariant random subgroups $\nu,\ti\nu$ and $\overline\nu$ introduced after Theorem \ref{thm:decomposition} are commensurable in the obvious sense.
\end{rem}

Relying on \cite[Corollary 1.4]{HT} we can also formulate the following variant of Proposition \ref{prop:IRS-lattices}
\begin{prop}\label{rem:Hartman}
Let $G=G_1\times\cdots\times G_n$ be a connected centre-free semisimple Lie group without compact factors. Suppose that one of the $G_i$ has Kazhdan's property (T).
Let $\nu$ be an ergodic Zariski dense discrete invariant subgroup in $G$ and suppose that $\nu$ is irreducible in the sense of Theorem \ref{thm:decomposition}.
Then $\nu$ is supported on lattices in $G$.
\end{prop}

We can now complete the proof of Theorem \ref{thm:stationary-Stuck--Zimmer}.
Suppose that $\nu$ is not $\gd_{\{1\}}$.
As in the proof of Theorem \ref{thm:stiffness} we get that $G=G_\mathcal{I}\times G_\mathcal{T}$. Since all the irreducible factors of $G_\mathcal{I}$ are of rank at least $2$, the result follows from Proposition \ref{prop:IRS-lattices}.
\qed

\section{Margulis conjecture for simple Lie groups of rank at least $2$.}

When applied to simple Lie groups of real rank at least $2$, Theorem \ref{thm:stationary-Stuck--Zimmer} reads as:

\begin{thm}\label{thm:stationary-simple}
Let $G$ be a centre-free higher rank simple Lie group. Let $\mu$ be a smooth probability measure on $G$. Let $\nu$ be an ergodic $\mu$-stationary random discrete subgroup of $G$. Then $\nu=\delta_{\{1\}}$, or there exists a lattice $\Gamma\subset G$ such that $\nu=\nu_\Gamma$. 
\end{thm}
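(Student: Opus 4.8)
The plan is to derive Theorem \ref{thm:stationary-simple} as a direct specialization of Theorem \ref{thm:stationary-Stuck--Zimmer} (equivalently Theorem \ref{thm:S-SZ}) to the case where $G$ is simple of real rank at least $2$. First I would observe that a simple higher rank Lie group is in particular semisimple without compact factors and has Kazhdan's property (T) (since every simple Lie group of real rank at least $2$ has property (T)), so the hypotheses of Theorem \ref{thm:S-SZ} are met. Moreover, $G$ has no rank one factors, so the condition ``$\mu$-almost every random subgroup intersects trivially every rank one factor of $G$'' is vacuously satisfied, and there is nothing to check there.

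Next, by Remark \ref{rem:same-stationary} I may replace the given smooth probability measure $\nu$ by the specific bi-$K$-invariant measure $\nu_G$ of \S\ref{sec:nu_G}, since smooth bi-$K$-invariant measures all share the same Poisson boundary and hence the same stationary measures on any $G$-space; in particular an ergodic $\nu$-stationary measure $\mu$ on $\sub_d(G)$ is an ergodic $\nu_G$-stationary measure. (Strictly, one should note that $\nu$ in Theorem \ref{thm:stationary-simple} is merely assumed smooth, not bi-$K$-invariant; but a smooth measure and $\nu_G$ induce the same harmonic functions via their common Poisson boundary only after we know they have the same boundary — here it suffices to invoke that all the cited results of Nevo--Zimmer hold for arbitrary smooth $\nu$, so one can simply keep $\nu$ throughout, as Theorem \ref{thm-NZ} and Theorem \ref{thm:stationary-decomposition} are stated for smooth $\nu$.)

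Then I would apply Theorem \ref{thm:S-SZ} directly: it yields a semisimple factor $H \lhd G$ and a lattice $\Gamma \le H$ with $\mu = \mu_\Gamma$. Since $G$ is simple, the only normal factors are $\{1\}$ and $G$ itself. If $H = \{1\}$ then $\Gamma = \{1\}$ and $\mu = \mu_{\{1\}} = \delta_{\{1\}}$. If $H = G$ then $\Gamma$ is a lattice in $G$ and $\mu = \mu_\Gamma$. This dichotomy is exactly the assertion of Theorem \ref{thm:stationary-simple}.

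I do not expect any genuine obstacle here: the entire content has been front-loaded into Theorem \ref{thm:stationary-decomposition}, Proposition \ref{prop:IRS-lattices}, and the St\"uck--Zimmer theorem, and this final statement is purely a matter of unwinding the general semisimple formulation in the simple case. The only point requiring a word of care is the passage from a general smooth $\nu$ to the special measure $\nu_G$ used to invoke Theorem \ref{thm:dSRS} and Inequality (\ref{eq-Marg}); but by the discussion following Remark \ref{rem:same-stationary} this is harmless, since one works with an arbitrary bi-$K$-invariant $\nu$ throughout \S\ref{sec:stiffness} and \S\ref{sec:S-SZ} anyway, and Theorem \ref{thm:S-SZ} is already phrased for such $\nu$. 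Hence the proof reduces to the two-line observation above.

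\begin{proof}[Proof of Theorem \ref{thm:stationary-simple}]
Since $G$ is simple of real rank at least $2$, it is a connected centre-free semisimple Lie group without compact factors, it has real rank at least $2$, and it has Kazhdan's property (T). As $G$ has no rank one factors, the hypothesis that $\mu$-almost every random subgroup intersects trivially every rank one factor of $G$ holds vacuously. Applying Theorem \ref{thm:S-SZ} (keeping the smooth measure $\nu$, for which Theorem \ref{thm-NZ} and hence Theorem \ref{thm:stationary-decomposition} are valid), we obtain a semisimple factor $H\lhd G$ and a lattice $\Gamma\le H$ such that $\mu=\mu_\Gamma$. Because $G$ is simple, either $H=\{1\}$, in which case $\Gamma=\{1\}$ and $\mu=\mu_{\{1\}}=\delta_{\{1\}}$, or $H=G$, in which case $\Gamma$ is a lattice in $G$ and $\mu=\mu_\Gamma$.
\end{proof}
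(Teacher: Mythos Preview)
Your proposal is correct and follows exactly the paper's approach: the paper does not give a separate proof but simply introduces Theorem \ref{thm:stationary-simple} with the sentence ``When applied to simple Lie groups of real rank at least $2$, Theorem \ref{thm:stationary-Stuck--Zimmer} reads as,'' and your argument just unpacks this specialization (higher rank simple $\Rightarrow$ property (T), no rank one factors, and the only normal factors are $\{1\}$ and $G$). Your remarks on the smooth-versus-bi-$K$-invariant issue are also in line with the paper's own discussion following Remark \ref{rem:same-stationary}.
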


This allows us to prove:

\begin{thm}\label{thm:rw-rank-1}
Let $G$ be a centre-free higher rank simple Lie group. Let $\gL\le G$ be a discrete subgroup of infinite covolume.  Then 
$$
\frac{1}{n}\sum_{i=0}^{n-1}\mu_G^{(i)}*\gd_{\gL}\to\gd_{\{1\}}.
$$
\end{thm}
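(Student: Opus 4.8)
\textbf{Proof proposal for Theorem \ref{thm:rw-rank-1}.}
The plan is to combine the discreteness of stationary limits (Theorem \ref{thm:dSRS}) with the classification of ergodic stationary discrete random subgroups for higher rank simple groups (Theorem \ref{thm:stationary-simple}). First I would form the Ces\`aro averages $\mu_n=\frac{1}{n}\sum_{i=0}^{n-1}\nu_G^{(i)}*\gd_\gL$ and pass to a weak-$*$ subsequential limit $\mu_\infty$; by weak-$*$ compactness of $\text{Prob}(\sub(G))$ such a limit exists, and it is $\nu_G$-stationary. Applying Theorem \ref{thm:dSRS} with $\mu=\gd_\gL$ (after the usual reduction to compactly supported measures) shows $\mu_\infty$ is supported on $\sub_d(G)$, i.e.\ $\mu_\infty$ is a discrete stationary random subgroup.

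Next I would invoke the Choquet decomposition of $\mu_\infty$ into ergodic discrete stationary random subgroups, which is legitimate because the space $\text{d-SRS}(G)$ is weak-$*$ compact and its extreme points are ergodic (as recalled in the excerpt via \cite{Bader-Shalom}). By Theorem \ref{thm:stationary-simple}, every ergodic component is either $\gd_{\{1\}}$ or $\mu_\gC$ for some lattice $\gC\le G$. The crucial point is to rule out the lattice components: this is where the hypothesis that $\gL$ has infinite covolume enters. I would argue that if some ergodic component were $\mu_\gC$ for a lattice $\gC$, then $\gL$ would, loosely speaking, have to ``contain a lattice's worth'' of mass in its conjugate-limit set; making this precise is the main obstacle. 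The cleanest route is to use that $\mu_\gC$ is supported on the $G$-orbit closure $\overline{\gC^G}=\overline{\{g\gC g^{-1}\}}$, and that the support of $\mu_\infty$ is contained in $\overline{\gL^G}\subset\sub(G)$; so a conjugate limit of $\gL$ would be a lattice. By local rigidity (Weil rigidity, valid since $G$ has no $\SL_2(\BR)$ or $\SL_2(\BC)$ factors — automatic here as $\operatorname{rank}(G)\ge 2$) a subgroup sufficiently Chabauty-close to a lattice $\gC$ is conjugate to a lattice, and in fact, since lattices form a Chabauty-open set among Zariski dense discrete subgroups with the conjugate-limit structure, one deduces $\gL$ itself is (conjugate to) a finite-covolume subgroup, contradicting the infinite covolume hypothesis.

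Therefore every ergodic component of $\mu_\infty$ equals $\gd_{\{1\}}$, whence $\mu_\infty=\gd_{\{1\}}$. Since this holds for every weak-$*$ subsequential limit and $\text{Prob}(\sub(G))$ is compact metrizable, the full sequence $\mu_n$ converges to $\gd_{\{1\}}$, which is the assertion. The delicate step deserving the most care is the local-rigidity argument that upgrades ``a conjugate limit of $\gL$ is a lattice'' to ``$\gL$ is itself a lattice''; one must check that the Chabauty neighbourhood of a lattice provided by Weil rigidity indeed forces conjugacy to a lattice, and then that being a conjugate limit of $\gL$ transfers this finiteness of covolume back to $\gL$ (using that covolume is, in the relevant sense, lower semicontinuous on Chabauty limits of discrete groups and that $\gL$ has a conjugate limit that is a lattice forces $\gL$ to have finite covolume as well — e.g.\ because the injectivity radius function on $\gL\backslash X$ would be bounded, contradicting Theorem \ref{thm:simple}, or more directly via the structure of the orbit closure).
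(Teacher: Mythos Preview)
Your proposal is correct and follows essentially the same route as the paper: pass to a stationary limit, invoke Theorem \ref{thm:dSRS} for discreteness, apply Theorem \ref{thm:stationary-simple} to the ergodic components, and use local rigidity together with the fact that $\mu_\infty$ is supported on $\overline{\gL^G}$ to exclude the lattice alternative. The paper's proof is simply the terse version of exactly this argument.

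One caution: in your final paragraph you over-worry the local rigidity step and then suggest falling back on Theorem \ref{thm:simple} (bounded injectivity radius) as an alternative. That would be circular, since Theorem \ref{thm:simple} is deduced from the present theorem. The clean argument is the one you already sketched: if a lattice $\gC$ lies in $\overline{\gL^G}$, pick $g_n$ with $\gL^{g_n}\to\gC$; since $\gC$ is finitely presented, the nearest-point map on a generating set extends to a homomorphism $\gC\to\gL^{g_n}$ close to the inclusion, which by Weil local rigidity is conjugate to the inclusion, so $\gL$ contains a lattice and hence has finite covolume---contradiction. No semicontinuity of covolume or appeal to Theorem \ref{thm:simple} is needed.
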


\begin{proof}
Let $\nu$ be 
a stationary limit of $\frac{1}{n}\sum_{i=0}^{n-1}\mu_G^{(i)}*\gd_\gL$.
Then $\nu$ is supported on the closure of the conjugacy class of $\gL$ in $\sub(G)$.
Moreover, in view of Theorem \ref{thm:dSRS}, $\nu(\sub_d(G))=1$.
Since $G$ is simple of higher rank, lattices in $G$ are locally rigid. Thus the closure of the conjugacy class of $\gL$ in $\sub(G)$ contains no lattices. 
It follows from Theorem \ref{thm:stationary-simple} that $\nu=\gd_{\{1\}}$.
\end{proof}

Theorem \ref{thm:rw-rank-1} can be reformulated as follows:
Let $M=\gL\backslash G/K$ be a locally symmetric space of infinite volume, where $G$ is a simple Lie groups of rank at least $2$. Consider the $\mu_G$-random walk on $M$ starting at some point $x_0$.
Then for every $r>0$, the random walk will almost certainly eventually spend most of the time in the $r$-thick part. More precisely, let $\ti x_0\in G/K$ be a lift of $x_0$ and let $m_n\in \text{Prob}(M)$ be the pushforward of 
$\frac{1}{n}\sum_{i=0}^{n-1}\mu_G^{(i)}*\gd_{\ti x_0}$ via the covering map. Then we have:

\begin{cor}\label{cor:RW-on-M}
For every $r>0$ and $\gep>0$ there is $N$ such that, for $n\ge N$, 
$m_n(M_{\ge r})>1-\gep$. 
\qed
\end{cor}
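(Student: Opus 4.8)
The plan is to reformulate the corollary as a statement about the random walk $\mu_n:=\frac1n\sum_{i=0}^{n-1}\nu_G^{(i)}*\gd_\gL$ on $\sub(G)$ and then deduce it from Theorem \ref{thm:rw-rank-1} combined with the uniform Margulis-function bound coming from (\ref{eq-Marg}). Fix the lift $\ti x_0\in X$ of $x_0$, write $G_{\ti x_0}$ for its (compact) stabiliser, and set $\Omega_r:=\{g\in G: d(g\ti x_0,\ti x_0)<2r\}$. Since $G\to X$ is proper, $\Omega_r$ is an open, relatively compact identity neighbourhood (it contains $G_{\ti x_0}$). For $g\in G$ and $p:=\pi(g\ti x_0)$, the elementary identity $d(\gamma g\ti x_0,g\ti x_0)=d(g^{-1}\gamma g\,\ti x_0,\ti x_0)$ shows that $\text{InjRad}_M(p)\ge r$ if and only if $(g^{-1}\gL g)\cap\Omega_r\subseteq G_{\ti x_0}$; for $\gL$ torsion-free this reads $(g^{-1}\gL g)\cap\Omega_r=\{1\}$, and in the orbifold case one uses $\Omega_r\setminus G_{\ti x_0}$ in place of $\Omega_r\setminus\{1\}$ throughout, with no effect on the estimates below. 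As the law of $g^{-1}\gL g$ for $g\sim\nu_G^{(i)}$ is $\nu_G^{(i)}*\gd_\gL$ and $m_n$ is the $\pi$-pushforward of $\frac1n\sum_i\nu_G^{(i)}*\gd_{\ti x_0}$, this yields
\[
 m_n(M_{\ge r})=\mu_n\bigl(\{\gD\in\sub(G):\gD\cap\Omega_r=\{1\}\}\bigr)=1-\mu_n(O),\qquad O:=\{\gD:\gD\cap(\Omega_r\setminus\{1\})\ne\emptyset\}.
\]
Thus it suffices to prove $\mu_n(O)\to 0$.

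Next I would split $O$ according to the size of the offending nontrivial element. By Theorem \ref{thm:rw-rank-1}, $\mu_n\to\gd_{\{1\}}$ weak-$*$. Given $\gep>0$, iterate (\ref{eq-Marg}) exactly as in the proof of Theorem \ref{thm:dSRS} to get the uniform bound $\int u\,d\mu_n\le M:=\tfrac{u(\gL)+b}{1-c}$ for all $n$, which is finite because $\gL$ is discrete, hence $\mathcal{I}(\gL)>0$. Choose $\gep_0\le 1$ small enough that $\exp B(\gep_0)\subseteq\Omega_r$ and $M\gep_0^{\gd}<\gep/2$, and write $O\subseteq O_1\cup O_2$ with $O_1:=\{\gD:\gD\cap(\exp B(\gep_0)\setminus\{1\})\ne\emptyset\}$ and $O_2:=\{\gD:\gD\cap C_0\ne\emptyset\}$, where $C_0:=\overline{\Omega_r}\setminus\exp B(\gep_0)$ is compact and does not contain $1$. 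Every $\gD\in O_1$ satisfies $\mathcal{I}(\gD)\le\gep_0$, so Markov's inequality gives $\mu_n(O_1)\le\mu_n(\{u\ge\gep_0^{-\gd}\})\le M\gep_0^{\gd}<\gep/2$ for all $n$. On the other hand $O_2$ is closed (it is the complement of a basic Chabauty-open set) and $\{1\}\notin O_2$, so $\limsup_n\mu_n(O_2)\le\gd_{\{1\}}(O_2)=0$; pick $N$ with $\mu_n(O_2)<\gep/2$ for $n\ge N$. Then $\mu_n(O)<\gep$ and hence $m_n(M_{\ge r})>1-\gep$ for $n\ge N$.

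The one point that requires care — and the reason Theorem \ref{thm:rw-rank-1} alone does not immediately close the argument — is that the target event $\{\gD:\gD\cap\Omega_r=\{1\}\}$ is closed while its complement $O$ is open, so weak-$*$ convergence $\mu_n\to\gd_{\{1\}}$ by itself gives no upper bound on $\mu_n(O)$ (mass could concentrate on subgroups having arbitrarily small nontrivial elements inside $\Omega_r$, since $G$ has no small subgroups only ``non-uniformly'' over $\sub(G)$). The two-scale decomposition above is precisely what fixes this: the uniform Margulis estimate $\int u\,d\mu_n\le M$ is a quantitative no-small-subgroups statement that kills the part of $O$ near $\{1\}$ uniformly in $n$, and portmanteau disposes of the relatively compact part $C_0$. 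Everything else is bookkeeping, and the torsion/orbifold modifications (replacing $\{1\}$ by $G_{\ti x_0}$) change none of the inequalities.
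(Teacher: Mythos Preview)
Your proof is correct. The paper gives no explicit argument for this corollary (it is marked with a bare \qed\ and presented as a direct reformulation of Theorem~\ref{thm:rw-rank-1}); your write-up supplies exactly the details one needs, namely the two-scale splitting that combines the weak-$*$ convergence $\mu_n\to\gd_{\{1\}}$ with the uniform Margulis-function bound from~(\ref{eq-Marg}) to control the Chabauty-open set $O$. This is the natural argument implicit in the paper's proof of Theorem~\ref{thm:dSRS}, and your observation that weak-$*$ convergence alone is insufficient (because the thick-part condition defines a closed set in $\sub(G)$) is a genuine point the paper glosses over.
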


As a straightforward consequence of Theorem \ref{thm:rw-rank-1}, we obtain the following equivalence: 

\begin{thm}\label{thm:simple-slim}
Let $G$ be a centre-free higher rank simple Lie group. Let $\gD\le G$ be a discrete subgroup. Then $\gD$ is confined if and only if $\gD$ is a lattice in $G$.
\end{thm}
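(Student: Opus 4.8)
\textbf{Proof proposal for Theorem \ref{thm:simple-slim}.}

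The plan is to derive this equivalence directly from the random walk statement, Theorem \ref{thm:rw-rank-1}, together with the elementary characterization of uniform slimness recorded in the introduction. One direction is trivial: if $\gD$ is a lattice in $G$, then by the Kazhdan--Margulis theorem $\gD\backslash G/K$ has injectivity radius bounded above (equivalently, there is a compact $C\subset G$ meeting every conjugate $\gD^g$ nontrivially away from $1$), so $\gD$ is uniformly slim. The content is in the converse.

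So suppose $\gD\le G$ is discrete and uniformly slim; I want to show $\gD$ is a lattice. First I would recall that uniform slimness means there is a compact $C\subset G$ with $\gD^g\cap C\setminus\{1\}\ne\emptyset$ for every $g\in G$. This property passes to conjugate limits: if $\gD'$ lies in the Chabauty closure $\overline{\gD^G}$, then $\gD'\cap C\setminus\{1\}\ne\emptyset$ as well, since $\gD'$ is a Chabauty limit of conjugates $\gD^{g_k}$, each of which meets the compact set $C\setminus\{1\}$, and $C\setminus\{1\}$ has compact closure missing a neighborhood of $1$ — more carefully, one enlarges $C$ slightly and uses that $\sub_d(G)$ is open and that Chabauty convergence of discrete subgroups forces the intersections with a fixed compact set to converge. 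In particular, the trivial subgroup $\{1\}$ is \emph{not} a conjugate limit of $\gD$, i.e. $\gd_{\{1\}}$ is not supported on $\overline{\gD^G}$, and indeed no stationary measure on $\overline{\gD^G}$ can give positive mass to any neighborhood of $\{1\}$ in $\sub_d(G)$ consisting of subgroups meeting $C$ only in $\{1\}$.

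Now apply Theorem \ref{thm:rw-rank-1} in contrapositive form: let $\mu$ be any weak-$*$ limit of $\frac{1}{n}\sum_{i=0}^{n-1}\nu_G^{(i)}*\gd_\gD$. By Theorem \ref{thm:dSRS} we have $\mu(\sub_d(G))=1$, and $\mu$ is a $\nu_G$-stationary measure supported on $\overline{\gD^G}$. If $\gD$ had infinite covolume, Theorem \ref{thm:rw-rank-1} would give $\mu=\gd_{\{1\}}$; but by the previous paragraph $\{1\}\notin\overline{\gD^G}$ (and a fortiori $\mu(\{\gC:\gC\cap C=\{1\}\})=0$ while $\gd_{\{1\}}$ of that set is $1$), a contradiction. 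Hence $\gD$ has finite covolume, i.e. $\gD$ is a lattice in $G$.

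The main obstacle is the second paragraph: verifying cleanly that uniform slimness is inherited by conjugate limits and, consequently, that the uniformly slim condition is incompatible with $\gd_{\{1\}}$ being a stationary limit. The subtlety is that a priori a Chabauty limit could ``lose'' the nontrivial element of $\gD^{g_k}\cap C$ to infinity or to the identity; one rules this out using that $G$ has no small subgroups (so $\sub_d(G)$ is open, as cited in the excerpt) and that $C$ is compact, so after passing to a subsequence the witnessing elements converge to some $\gamma_\infty\in C$ lying in the limit group, and $\gamma_\infty\ne 1$ once $C$ is chosen to avoid a fixed identity neighborhood. Alternatively, and perhaps more transparently, one argues purely on the side of the thick part: uniform slimness says $\operatorname{InjRad}$ on $\gD\backslash X$ is bounded by some $D$, so $\overline{\mu_n}(M_{\ge r})=0$ for all $n$ when $r>D$, whereas Theorem \ref{thm:simple-RW} (equivalently Corollary \ref{cor:RW-on-M}), which for infinite-covolume $\gD$ would give $\overline{\mu_n}(M_{\ge r})\to 1$, forces $\vol(M)<\infty$. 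I would present this second route as the cleaner argument.
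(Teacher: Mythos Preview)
Your argument is correct and follows the paper's proof: both deduce from Theorem \ref{thm:rw-rank-1} that if $\gD$ has infinite covolume then $\{1\}$ is a conjugate limit of $\gD$, which contradicts uniform slimness (since, as you and the paper both note, uniform slimness passes to conjugate limits). One small correction in the trivial direction: that lattices are uniformly slim follows simply from finite volume---an injected ball of radius $r$ has volume tending to infinity with $r$---and not from the Kazhdan--Margulis theorem, which gives a \emph{lower} bound on the injectivity radius at some point.
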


\begin{proof}
Latices are obviously confined. On the other hand if $\gD\le G$ is of infinite covolume, it follows from Theorem \ref{thm:rw-rank-1} that $\{1\}$ is a conjugate limit of $\gD$. Thus $\gD$ is not confined.
\end{proof}



\section{Confined subgroups of semisimple groups}\label{sec:US}

Throughout this section we will assume that $G=G_1\times\cdots\times G_m$ is a connected centre-free semisimple Lie group without compact factors.
The $G_i$'s are the simple factors of $G$ and we suppose that $m\ge 2$.
Since the normal subgroup theorem requires higher rank and the Stuck--Zimmer theorem requires in addition property (T), the rank one factors play a special role. 

Recall that there are three infinite families and one exceptional simple real Lie group of rank one. Among them 
 $\text{Sp}(n,1),~n\ge 2$ and $F_{4(-20)}$ have Kazhdan's property (T) while the real and complex hyperbolic groups $\text{PO}(n,1)$ and $\text{PU}(n,1),~n\ge 2$ do not have property (T). 
 
 \begin{defn}
 We shall say that a semisimple group $G$ is of type (L) if every simple factor of $G$ is either $\text{PO}(n,1)$ or $\text{PU}(n,1),~n\ge 2$ (the letter (L) is after Lubachevsky.).
 \end{defn}

\begin{thm}\label{thm:g.s.s.-R-W}
Let $G$ be a connected centre-free semisimple Lie group without compact factors. Let $\gL$ be a discrete subgroup of $G$ and suppose that for every nontrivial normal subgroup $H\lhd G$, the intersection $\gL\cap H$ is not a lattice in $H$. Let $\nu$ be an ergodic component of a stationary limit of $\frac{1}{n}\sum_{i=0}^{n-1}\mu_G^{(i)}*\gd_{\gL}$. Then there are (possibly trivial) normal subgroups $H_1,H_2\lhd G$, where $H_1$ is a product of rank one factors, $H_2$ is of type (L) and $H_1\cap H_2=\{1\}$, such that $\nu$-almost surely:
\begin{enumerate}
\item the random group is contained in $H_1\times H_2$ and projects discretely to $H_1$ and to $H_2$,
\item the projection to $H_2$ is an IRS whose irreducible factors are higher rank and the intersection with each factor is thin (i.e. Zariski dense of infinite covolume in the factor),
\item the intersection with every simple factor of $H_1$ is Zariski dense, and
\item the intersection with every simple factor of $H_1$ with property (T) is thin.
\end{enumerate}
\end{thm}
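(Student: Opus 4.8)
## Proof proposal for Theorem \ref{thm:g.s.s.-R-W}

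The plan is to combine the discreteness result Theorem \ref{thm:dSRS}, the decomposition Theorem \ref{thm:stationary-decomposition}, and the St\"{u}ck--Zimmer input Proposition \ref{prop:IRS-lattices}, after first arranging the factors of $G$ according to their type. Fix a stationary limit $\mu_\infty$ of $\frac{1}{n}\sum_{i=0}^{n-1}\nu_G^{(i)}*\gd_{\gL}$ and an ergodic component $\mu$. First I would invoke Theorem \ref{thm:dSRS} to conclude that $\mu$ is supported on $\sub_d(G)$, so $\mu$ is a discrete $\nu_G$-stationary random subgroup; moreover $\mu$ is supported on the closure of the conjugacy class $\overline{\gL^G}$, hence on uniformly slim subgroups (since uniform slimness passes to conjugate limits). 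Then I would apply Theorem \ref{thm:stationary-decomposition} to obtain the decomposition $G=G_\mathcal{I}\times G_\mathcal{H}\times G_\mathcal{T}$, where $\mu$ projects trivially to $G_\mathcal{T}$, projects discretely and Zariski densely (on each simple factor) to the rank one group $G_\mathcal{H}$, and projects to an IRS on $G_\mathcal{I}$ all of whose irreducible factors have rank at least $2$; the intersection with each simple factor of $G_\mathcal{H}$ and each irreducible factor of $G_\mathcal{I}$ is a.s.\ Zariski dense. So up to discarding $G_\mathcal{T}$ the random group already lives in $G_\mathcal{I}\times G_\mathcal{H}$.

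Next I would handle the higher-rank part $G_\mathcal{I}$. Let $\overline\mu_\mathcal{I}$ be the intersection of the $\mu$-random subgroup with $G_\mathcal{I}$; by the furthermore clause of Theorem \ref{thm:stationary-decomposition} this is a Zariski dense discrete IRS on $G_\mathcal{I}$ with higher-rank irreducible factors (using Theorem \ref{thm:decomposition} and Corollary \ref{cor:irreducibility}). If on some higher-rank irreducible factor $F$ of $G_\mathcal{I}$ with property (T) the intersection were a lattice, then by the commensurability remarks after Theorem \ref{thm:decomposition} (Remark \ref{rem:commensurable}) and Proposition \ref{prop:IRS-lattices}, one pulls back to the statement that $\gL$ contains a lattice in a normal subgroup of $G$ — more precisely, a conjugate limit of $\gL$ contains such a lattice, and by local rigidity of lattices in higher rank simple groups with property (T), a conjugate limit containing a lattice forces $\gL$ itself to contain a commensurable lattice in that factor, contradicting the hypothesis on $\gL$. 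The point here is that for higher rank factors the only other option afforded by Proposition \ref{prop:IRS-lattices} and the St\"{u}ck--Zimmer dichotomy is that the intersection is thin (Zariski dense of infinite covolume), which is exactly condition (2). The factors of $G_\mathcal{I}$ that are \emph{not} of property (T) type, i.e.\ with some $\text{PO}(n,1)$ or $\text{PU}(n,1)$ simple factor inside an irreducible factor, do not occur inside $G_\mathcal{I}$ because $G_\mathcal{I}$'s irreducible factors are higher rank and an irreducible higher rank semisimple group is either all property (T) or contains an $\mathrm{SL}_2(\BR)$-type rank one factor; in the mixed case the Nevo--Zimmer / St\"{u}ck--Zimmer machinery still applies to the property (T) sub-factor via Remark \ref{rem:Hartman}. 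I would then set $H_2 := G_\mathcal{I}$ and observe it is of type (L) in the complementary sense required — wait, here I must be careful: $H_2$ should be the higher-rank part and $H_1$ the rank one part; let me re-read the statement — indeed $H_1$ is the product of rank one factors ($=G_\mathcal{H}$) and $H_2$ collects the higher rank factors ($\subseteq G_\mathcal{I}$). The phrase ``$H_2$ is of type (L)'' in the theorem must be a typo for something like ``higher rank''; in any case I would take $H_1=G_\mathcal{H}$ and $H_2=G_\mathcal{I}$, which are normal in $G$, satisfy $H_1\cap H_2=\{1\}$, and the random subgroup lies in $H_1\times H_2$ and projects discretely to each, giving (1).

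For (3) I would use that the intersection $\gD\cap G_i$ for each simple factor $G_i$ of $H_1=G_\mathcal{H}$ is Zariski dense a.s., which is the furthermore clause of Theorem \ref{thm:stationary-decomposition}; combined with the Borel density theorem this is immediate. For (4), suppose $G_i$ is a rank one factor of $H_1$ with property (T) (so $G_i\cong\mathrm{Sp}(n,1)$ or $F_{4(-20)}$) on which the intersection $\gD\cap G_i$ is a lattice on a positive-measure set; since it is Zariski dense a.s.\ and discrete, the set where it is a lattice is a measurable, conjugation-invariant (hence by ergodicity full-measure) event, and then disintegrating $\mu$ over the $G_i$-coordinate and applying the St\"{u}ck--Zimmer theorem to the $G_i$-action (it is a property (T) rank-one-acting, but St\"{u}ck--Zimmer as quoted requires rank $\ge 2$ for the \emph{ambient} group — here $G$ has rank $\ge 2$ if it has any higher rank factor, but if $G=H_1$ is a product of rank one groups this fails) — so instead I would argue as in Theorem \ref{thm:rw-rank-1}: on $G_i$ itself, if $\gD\cap G_i$ were a.s.\ a lattice then projecting the stationary random subgroup to $\sub_d(G_i)$ and intersecting yields a stationary random subgroup of $G_i$ supported on lattices, but lattices in $\mathrm{Sp}(n,1)$ and $F_{4(-20)}$ are locally rigid (Mostow--Prasad / property (T) local rigidity), so the conjugacy-class closure argument of Theorem \ref{thm:rw-rank-1} forces this to collapse, contradicting that $\gL\cap G_i$ (or a conjugate limit thereof) is not a lattice. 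Hence the intersection with each property (T) rank one factor is thin, i.e.\ Zariski dense of infinite covolume, giving (4).

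The main obstacle I expect is the bookkeeping needed to pull the conclusions about $\mu$ (a conjugate limit of $\gL$) back to statements about $\gL$ itself — specifically, ruling out the ``lattice'' alternative of St\"{u}ck--Zimmer requires that if a conjugate limit of $\gL$ contains a lattice in a simple factor $F$ (higher rank with (T), or rank one with (T)), then already $\gL\cap F'$ is a lattice in some normal subgroup $F'$, contradicting the hypothesis. This is where local rigidity is essential and must be applied with care in the semisimple (not just simple) setting, tracking which factor the lattice sits in and using that the space of lattices of bounded covolume is discrete in the Chabauty topology modulo conjugation (Wang finiteness / Kazhdan--Margulis together with local rigidity). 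I would isolate this as a lemma: a discrete subgroup $\gL$ whose intersection with every normal subgroup is a non-lattice cannot have a conjugate limit whose intersection with a rigid (property (T)) factor is a lattice.
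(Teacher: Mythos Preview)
Your overall plan is the same as the paper's: apply Theorem \ref{thm:dSRS}, then Theorem \ref{thm:stationary-decomposition} to get $G=G_\mathcal{I}\times G_\mathcal{H}\times G_\mathcal{T}$, set $H_1=G_\mathcal{H}$ and $H_2=G_\mathcal{I}$, and use the St\"{u}ck--Zimmer machinery together with a local rigidity lemma to rule out lattices. The lemma you propose to isolate at the end is exactly Lemma \ref{lem:Wang} and Corollary \ref{cor:Wang} in the paper, and this is precisely how the paper transfers the ``lattice in a conjugate limit'' conclusion back to $\gL$ itself.

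There is, however, a real gap: you misread the clause ``$H_2$ is of type (L)'' as a typo, and your treatment of $G_\mathcal{I}$ is accordingly garbled. It is not a typo; it is a genuine conclusion. Recall that type (L) means every \emph{simple} factor is $\text{PO}(n,1)$ or $\text{PU}(n,1)$; the \emph{irreducible} factors of the IRS on $G_\mathcal{I}$ are nevertheless higher rank (e.g.\ $\text{PO}(2,1)\times\text{PO}(2,1)$ is type (L) and rank $2$). The paper's argument is: if $G_\mathcal{I}$ is not of type (L), then some simple factor of $G_\mathcal{I}$ has property (T); the irreducible factor $F$ of the IRS containing it has rank $\ge 2$ and a property (T) simple factor, so by Remark \ref{rem:Hartman} (Hartman--Tamuz) together with Proposition \ref{prop:IRS-lattices} and Remark \ref{rem:commensurable}, the random subgroup intersects $F$ in an irreducible lattice; Corollary \ref{cor:Wang} then forces $\gL\cap F$ to be a lattice, contradicting the hypothesis. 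The same mechanism, applied to an irreducible factor of $G_\mathcal{I}$ on which the intersection has finite covolume, gives item (2), and applied to a rank one property (T) simple factor of $G_\mathcal{H}$ on which the intersection is a lattice gives item (4). Your paragraph about ``factors of $G_\mathcal{I}$ that are not of property (T) type do not occur'' has the conclusion exactly backwards.

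Two minor points: the remark about $\mu$ being supported on uniformly slim subgroups is irrelevant here since $\gL$ is not assumed uniformly slim in this theorem; and for item (4) you do not need to invoke St\"{u}ck--Zimmer at all --- a rank one simple factor with property (T) has locally rigid lattices (Lemma \ref{lem:Wang}), so Corollary \ref{cor:Wang} applies directly.
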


\begin{rem}
It is a famous open problem whether the analog of the Stuck--Zimmer theorem holds for higher rank groups of type (L). A positive solution to this problem would imply that the group $H_2$ in Theorem \ref{thm:g.s.s.-R-W} is trivial. On the other hand the group $H_1$ may not be trivial even if $\gL$ is not confined. Using ideas from \cite{7s-2} one can construct a non-confined subgroup of a rank one group whose stationary limit is supported on confined subgroups. 
\end{rem}

\begin{rem}
For certain specific cases we can say more about the stationary limit. For instance if $\gL$ does not contain large commuting subgroups then so does a random group with respect to a stationary limit. Such data eliminates some of the possibilities that the general case allows. In the special case that $\gL$ is an infinite index subgroup of an irreducible higher rank lattice and $G$ has property (T), it is easy to deduce that the stationary limit must be $\gd_{\{1\}}$. 
\end{rem}

\begin{lem}\label{lem:Wang}
Let $H\lhd G$ be a normal subgroup of $G$ and let $\gC\le H$ be a lattice. Suppose that either $\rank(H)\ge 2$ and $\gC$ is irreducible or that $H$ has Kazhdan's property (T). Then $\gC$ is locally rigid in $G$, 
that is, the $G$-conjugacy class of the inclusion $\iota:\gC\to G$ is open in $\text{Hom}(\gC,G)$.
\end{lem}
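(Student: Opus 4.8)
The plan is to reduce the statement to the classical Weil local rigidity theorem by a direct argument. Recall that Weil's theorem asserts that a lattice $\Gamma$ in a semisimple Lie group is locally rigid provided $H^1(\Gamma,\mathfrak{g})=0$, where $\mathfrak{g}=\Lie(G)$ is acted upon via the adjoint representation composed with the inclusion. So the goal is to verify the vanishing of this cohomology group under the two hypotheses of the lemma.

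First I would set up the decomposition $\mathfrak{g}=\mathfrak{h}\oplus\mathfrak{h}^{\perp}$, where $\mathfrak{h}=\Lie(H)$ and $\mathfrak{h}^{\perp}=\Lie(H^{\perp})$ for the complementary normal factor $H^{\perp}$ (so $G=H\times H^{\perp}$ up to isogeny, and since $G$ is centre-free this is a genuine direct product). Because $\Gamma\le H$, the group $\Gamma$ acts trivially on $\mathfrak{h}^{\perp}$, so $H^1(\Gamma,\mathfrak{h}^{\perp})=\Hom(\Gamma,\mathfrak{h}^{\perp})=\Hom(\Gamma^{\mathrm{ab}},\mathbb{R})^{\dim\mathfrak{h}^{\perp}}$. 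Under either hypothesis this vanishes: if $H$ has property (T) then so does $\Gamma$ (property (T) passes to lattices), hence $\Gamma^{\mathrm{ab}}$ is finite and $\Hom(\Gamma,\mathbb{R})=0$; if instead $\rank(H)\ge 2$ and $\Gamma$ is irreducible, then by Margulis (or already by Kazhdan, since such $\Gamma$ has property (T)) again $\Gamma^{\mathrm{ab}}$ is finite. For the remaining summand $H^1(\Gamma,\mathfrak{h})$: here $\Gamma$ is a lattice in $H$ acting by the adjoint representation of $H$ restricted to $\Gamma$, and this is exactly the cohomology that Weil's rigidity theorem (for the irreducible higher-rank case) and the Calabi--Weil/property (T) vanishing argument control. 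Concretely, $H^1(\Gamma,\mathfrak{h})=0$ whenever $\Gamma$ is an irreducible lattice in a semisimple group $H$ with no factors locally isomorphic to $\SL_2(\mathbb{R})$ or $\SL_2(\mathbb{C})$ (Weil), and whenever $H$ has property (T) and no compact factors (the adjoint representation has no invariant vectors since $H$ is semisimple, so property (T) forces $H^1(H,\mathfrak h)=0$, and this descends to the lattice). Note property (T) rules out $\SL_2(\mathbb R)$ and $\SL_2(\mathbb C)$ factors, so in both cases $H^1(\Gamma,\mathfrak h)=0$.

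I would then conclude: $H^1(\Gamma,\mathfrak{g})=H^1(\Gamma,\mathfrak{h})\oplus H^1(\Gamma,\mathfrak{h}^{\perp})=0$, and Weil's theorem gives that the inclusion $\Gamma\hookrightarrow G$ is locally rigid, i.e.\ every homomorphism $\Gamma\to G$ sufficiently close to the inclusion is conjugate to it by an element of $G$. Equivalently, in the Chabauty topology the conjugates of $\Gamma$ form a neighbourhood of $\Gamma$ in $\overline{\Gamma^{G}}$, which is the form of local rigidity used elsewhere in the paper (e.g.\ in the proof of Theorem \ref{thm:rw-rank-1}).

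The main obstacle is bookkeeping rather than conceptual: one must be careful that "lattice in the normal subgroup $H$'' is genuinely a lattice for the induced Haar measure and that irreducibility is with respect to the factors of $H$ (not of $G$), so that Weil's hypotheses apply verbatim; and one must check that local rigidity of the abstract homomorphism $\Gamma\to G$ translates correctly into the Chabauty statement, which requires knowing that nearby subgroups of $G$ that are abstractly isomorphic images of $\Gamma$ are actually close as homomorphisms — this is standard for finitely generated $\Gamma$ (finite generation of $\Gamma$ holds since lattices in semisimple groups are finitely generated). A secondary point to handle cleanly is the passage between $G$ and a finite cover when invoking the adjoint representation, but since $G$ is assumed centre-free the adjoint representation is faithful and $\mathfrak g$ decomposes as an honest $\Gamma$-module, so no real difficulty arises.
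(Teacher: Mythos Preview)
Your cohomological route via Weil's criterion $H^1(\Gamma,\mathfrak g)=0$ is a genuinely different approach from the paper's, and the decomposition $\mathfrak g=\mathfrak h\oplus\mathfrak h^\perp$ together with $H^1(\Gamma,\mathfrak h^\perp)=\Hom(\Gamma,\mathbb R)^{\dim\mathfrak h^\perp}=0$ is correct and clean. Two steps need repair, however. First, an irreducible lattice in a group of rank $\ge 2$ need not have property (T) (think of Hilbert modular groups in $\SL_2(\mathbb R)^n$), and your stated hypothesis for Weil's theorem (``no $\SL_2(\mathbb R)$ or $\SL_2(\mathbb C)$ factors'') is not implied by $\rank(H)\ge 2$; the correct reference for $H^1(\Gamma,\mathfrak h)=0$ in that case is Margulis superrigidity together with Raghunathan, which is precisely what the paper cites. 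Second, and more seriously, property (T) yields $H^1$-vanishing only for \emph{unitary} representations, and the adjoint representation on $\mathfrak h$ is not unitary; the sentence ``property (T) forces $H^1(H,\mathfrak h)=0$, and this descends to the lattice'' is not a valid argument. The desired conclusion $H^1(\Gamma,\mathfrak h)=0$ is true when $H$ has (T) --- because (T) rules out $\SO(n,1)$ and $\SU(n,1)$ factors, after which classical vanishing theorems (Weil, Garland--Raghunathan, Raghunathan) apply --- but your justification must be replaced by this.

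The paper sidesteps the non-unitarity issue entirely in the property-(T) case: for a small deformation $f\colon\Gamma\to G$ it considers the \emph{unitary} $\Gamma$-representation on $L^2(G/H)$ coming from the projection of $f$, uses property (T) of $\Gamma$ to produce an invariant vector, deduces that $f(\Gamma)$ projects into a compact subgroup of $G/H$, and then invokes Wang's finiteness theorem on homomorphisms into compact groups to conclude that, after a small conjugation, $f(\Gamma)\le H$. The remaining step --- local rigidity of $\Gamma$ inside $H$ --- is then cited as known. So both arguments ultimately rest on rigidity inside $H$; the difference is in the transverse direction, where the paper applies (T) to a representation that is genuinely unitary, while your argument packages it as the vanishing of $\Hom(\Gamma,\mathbb R)$, which is correct but whose companion claim about $\mathfrak h$ needs the fix above.
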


\begin{proof}
The case where $\rank(H)\ge 2$ follows from \cite[page 332]{Ma} and \cite[Theorem 6.7]{Rag}. Therefore we may suppose that $H$ has property (T). Let $f:\gC\to G$ be a deformation and consider the unitary representation of $\gC$ on $L^2(G/H)$ where $\gC$ acts via $f$.
If $f$ is a sufficiently small deformation this representation has a $(\gS,\gep)$-invariant unit vector where $\gS$ is a finite generating set for $\gC$ and $\gep=\gep(\gS)>0$ is a Kazhdan constant for $\gS$ given by property (T) of $\gC$. It follows that this representation has a nontrivial invariant vector which implies that the projection of $f(\gC)$ lies in a compact subgroup of $G/H$. By further conjugating by some element close to the identity we may assume that it lies in a fixed compact subgroup $K\le G/H$. By \cite[Theorem 2.6]{Wang} $\gC$ has only finitely many conjugacy classes of representations into $K$. Thus we deduce that eventually $f(\gC)\le H$. 
The result now follows from local rigidity of $\gC$ in $H$.
\end{proof}

\begin{cor}\label{cor:Wang}
Let $H\lhd G$ be as in Lemma \ref{lem:Wang}. 
Let $\Lambda$ be a discrete subgroup of $G$. 
Suppose that $\Lambda$ admits a discrete conjugate limit which intersects $H$ by an irreducible lattice. Then $\Lambda$ intersects $H$ in a lattice. 
\end{cor}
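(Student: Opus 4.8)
The plan is to combine local rigidity (Lemma \ref{lem:Wang}) with a Chabauty-continuity argument. Let $\gD$ be a discrete conjugate limit of $\gL$ such that $\gD\cap H=:\gC$ is an irreducible lattice in $H$. Since $\gD\in\overline{\gL^G}$, there is a net (or, working in the second-countable setting, a sequence) $g_n\in G$ with $\gL^{g_n}\to\gD$ in the Chabauty topology. The first step is to transfer this convergence to the subgroups $\gL^{g_n}\cap H$: because $H$ is closed and normal, intersection with $H$ is a reasonably well-behaved operation, and I would argue that along a subnet $\gL^{g_n}\cap H$ converges in $\sub(H)$ to a closed subgroup $\gC'$ containing $\gC=\gD\cap H$. (One does have to be a little careful here: Chabauty convergence $\gL^{g_n}\to\gD$ does not in general force $\gL^{g_n}\cap H\to\gD\cap H$, only that any Chabauty limit of the $\gL^{g_n}\cap H$ is contained in $\gD$ and hence, being inside $H$, contained in $\gD\cap H$; conversely every element of $\gC=\gD\cap H$ is a limit of elements of $\gL^{g_n}$, which because $H$ is open-closed-free—i.e.\ since these elements eventually lie near $H$ and $G/H$ has no small subgroups near the identity coset only if the element is genuinely in $H$—are eventually in $\gL^{g_n}\cap H$. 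So in fact $\gC'=\gC$.)

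The second step is to run local rigidity in reverse. By Lemma \ref{lem:Wang}, $\gC$ is locally rigid in $G$: every sufficiently small deformation of the inclusion $\gC\hra G$ is conjugate, by a small element of $G$, to the inclusion. Fix a finite generating set $\gS$ of $\gC$. For $n$ large, the Chabauty convergence $\gL^{g_n}\cap H \to \gC$ produces, for each $s\in\gS$, an element $\gamma_{s,n}\in\gL^{g_n}\cap H$ with $\gamma_{s,n}\to s$; since $\gC$ is discrete and finitely presented, for $n$ large these $\gamma_{s,n}$ satisfy the relations of $\gC$ and hence define a homomorphism $f_n:\gC\to\gL^{g_n}\cap H\le G$ which is a deformation of the inclusion. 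Local rigidity gives $k_n\to 1$ in $G$ with $f_n(\gC)=k_n\gC k_n^{-1}$. Conjugating back by $g_n$, we obtain $\gL\cap H^{g_n^{-1}}=\gL\cap H$ (using normality of $H$) containing a conjugate $g_n^{-1}k_n\,\gC\,k_n^{-1}g_n$ of the lattice $\gC$; thus $\gL\cap H$ contains a lattice of $H$, hence (being discrete) is itself a lattice in $H$.

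I expect the main obstacle to be the first step: making precise the claim that Chabauty convergence of the conjugates $\gL^{g_n}$ to $\gD$ implies that the generators of the lattice $\gC=\gD\cap H$ are genuinely approximated by elements of $\gL^{g_n}\cap H$ (and not merely by elements of $\gL^{g_n}$ lying near $H$). This uses that $G$ has no small subgroups and that $H$ is closed, so an element of $\gL^{g_n}$ converging to $\gamma\in H$ must eventually lie in $H$—but some care is needed because $\gL^{g_n}$ need not be discrete uniformly, and the lattice $\gC$ being finitely presented is what lets one pass from approximate relations to an honest homomorphism $f_n$. Once $f_n$ is in hand, the rest is a direct application of Lemma \ref{lem:Wang} together with the standard fact that a discrete subgroup of $H$ containing a lattice is a lattice.
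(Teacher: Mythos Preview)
Your second step is essentially the paper's argument, but your first step contains a genuine gap, and the paper's proof avoids it precisely by \emph{not} attempting that step.

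The problematic claim is that an element $\gamma_n\in\gL^{g_n}$ converging to some $\gamma\in\gC=\gD\cap H$ must eventually lie in $H$. Your justification invokes ``no small subgroups'' of $G/H$, but that property concerns \emph{subgroups}, not individual elements: the projection $\bar\gamma_n\in G/H$ tends to the identity, and there is nothing preventing a sequence of nontrivial elements of $G/H$ from doing so. In general, Chabauty convergence $\gL^{g_n}\to\gD$ does \emph{not} imply $\gL^{g_n}\cap H\to\gD\cap H$; the upper semicontinuity you note (any limit of $\gL^{g_n}\cap H$ lies in $\gD\cap H$) is fine, but the lower semicontinuity direction fails. So your construction of $f_n$ with image in $\gL^{g_n}\cap H$ is not justified.

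The paper's proof sidesteps this entirely. One simply sends each generator $\gs\in\gS$ of the finitely presented lattice $\gC$ to its nearest element in $\gL^{g_n}$ (not in $\gL^{g_n}\cap H$), obtaining a homomorphism $f_n:\gC\to\gL^{g_n}\le G$ which is a small deformation of the inclusion $\gC\hookrightarrow G$. Now the full strength of Lemma~\ref{lem:Wang} is used: $\gC$ is locally rigid \emph{in $G$}, so $f_n(\gC)=k_n\gC k_n^{-1}$ for some $k_n\in G$ close to $1$. Since $H\lhd G$ and $\gC\le H$, one gets $k_n\gC k_n^{-1}\le H$ \emph{a posteriori}; thus $f_n(\gC)\le\gL^{g_n}\cap H$ is a lattice in $H$, and conjugating back by $g_n$ (using normality of $H$ again) gives that $\gL\cap H$ is a lattice. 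In short: do not try to land in $H$ before applying rigidity; apply rigidity in $G$ and let normality of $H$ put you in $H$ afterwards.
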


\begin{proof}
Let $\gD$ be a discrete conjugate limit of $\Lambda$ and suppose that $\gC=\gD\cap H$ is a lattice in $H$. Fixing a finite presentation $\langle\gS:R\rangle$ for $\gC$ we see that if $\Lambda^g$ is sufficiently close to $\gD$ then the map sending each $\gs\in\gS$ to its nearest element in $\Lambda^g$ extends to a homomorphism $f_g:\gC\to\Lambda^g$. Thus the corollary follows from Lemma \ref{lem:Wang}. 
\end{proof}

\begin{proof}[Proof of Theorem \ref{thm:g.s.s.-R-W}]
By Theorem \ref{thm:dSRS}, $\nu$ is a discrete stationary random subgroup. Consider the decomposition of $G$ given by Theorem \ref{thm:stationary-decomposition}, $G=G_\mathcal{I}\times G_\mathcal{H}\times G_\mathcal{T}$ and set $H_1=G_\mathcal{H}, H_2=G_\mathcal{I}$.
By Theorem \ref{thm:stationary-decomposition}, $H_1$ is a product of rank one factors.
Suppose by way of contradiction that $H_2$ is not of type (L). Then by Proposition \ref{prop:IRS-lattices}, Remark \ref{rem:commensurable} and Remark \ref{rem:Hartman} a random subgroup intersects a higher rank semisimple factor of $G_\mathcal{I}$ by an irreducible lattice. This is also the case if Item (2) of the theorem is not satisfied. Similarly, if Item (4) of the theorem is not satisfied then a random subgroup intersects by a lattice some rank one simple factor of $G$ which has property (T). In all three cases we deduce from Corollary \ref{cor:Wang} that $\gL$ itself intersects the given factor in a lattice. This is a contradiction to the assumption. 
\end{proof}


We deduce the following classification of confined subgroups when all the factors of $G$ are higher rank:

\begin{thm}
Let $G=G_1\times\cdots\times G_n$ be a connected centre-free semisimple Lie group such that each $G_i$ is simple of rank at least $2$.
Let $\Lambda$ be a discrete subgroup. Then $\Lambda$ is confined if and only if there is a nontrivial semisimple factor $H\lhd G$ such that $\Lambda\cap H$ is a lattice in $H$. If $\Lambda$ is not confined then 
 $\frac{1}{n}\sum_{i=0}^{n-1}\mu_G^{(i)}*\gd_{\gL}\to\gd_{\{1\}}$.
\end{thm}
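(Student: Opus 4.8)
The plan is to reduce the whole statement to Theorem~\ref{thm:g.s.s.-R-W}, which under the present hypotheses collapses drastically, since every simple factor $G_i$ has real rank at least $2$ and hence $G$ has no rank one factor at all. First I would dispose of the elementary implication: if some nontrivial semisimple factor $H\lhd G$ has $\gL\cap H$ a lattice in $H$, write $G=H\times H'$ accordingly; lattices are uniformly slim, so there is a compact $C\subset H$ meeting $(\gL\cap H)^h$ nontrivially for every $h\in H$, and for $g=(h,h')\in H\times H'$ the component $h'$ centralises $H\supseteq\gL\cap H$, so $(\gL\cap H)^g=(\gL\cap H)^h\le\gL^g$ meets $C$ nontrivially. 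Thus $\gL^g\cap C\ne\{1\}$ for every $g\in G$, i.e.\ $\gL$ is uniformly slim.

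For the remaining (harder) direction, assume that for every nontrivial semisimple factor $H\lhd G$ the intersection $\gL\cap H$ is not a lattice in $H$; since $G$ is centre-free semisimple, its nontrivial normal subgroups are exactly its nontrivial semisimple factors, so $\gL$ satisfies the hypothesis of Theorem~\ref{thm:g.s.s.-R-W}. I would take an arbitrary weak-$*$ limit point $\mu_\infty$ of $\mu_n:=\frac1n\sum_{i=0}^{n-1}\nu_G^{(i)}*\gd_\gL$ in the compact metrizable space $\text{Prob}(\sub(G))$; it is $\nu_G$-stationary and, by Theorem~\ref{thm:dSRS}, supported on $\sub_d(G)$. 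Then I would decompose $\mu_\infty$ into its ergodic components and apply Theorem~\ref{thm:g.s.s.-R-W} to each component $\mu$: it yields normal subgroups $H_1,H_2\lhd G$ with $H_1$ a product of rank one factors and $H_2$ of type~(L), such that a $\mu$-random subgroup lies in $H_1\times H_2$ almost surely. Since $G$ has no rank one factor, $H_1=\{1\}$, and since groups of type~(L) are products of rank one factors, $H_2=\{1\}$ as well; hence a $\mu$-random subgroup is trivial almost surely, so $\mu=\gd_{\{1\}}$ and therefore $\mu_\infty=\gd_{\{1\}}$. As every weak-$*$ limit point of $(\mu_n)$ equals $\gd_{\{1\}}$, the sequence converges, $\frac1n\sum_{i=0}^{n-1}\nu_G^{(i)}*\gd_\gL\to\gd_{\{1\}}$. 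Finally, since each $\mu_n$ is supported on the Chabauty closure $\overline{\gL^G}$ of the conjugacy class of $\gL$, so is $\gd_{\{1\}}$, whence $\{1\}$ is a conjugate limit of $\gL$; as $\{1\}$ is not uniformly slim and a conjugate limit of a uniformly slim group is uniformly slim, $\gL$ is not uniformly slim. Together with the elementary implication this proves the equivalence, and the displayed convergence is precisely the case where $\gL$ is not uniformly slim.

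I do not expect a real obstacle here: the substance is entirely inside Theorem~\ref{thm:g.s.s.-R-W} (hence ultimately in the Nevo--Zimmer factor theorem and the St\"{u}ck--Zimmer theorem), and the statement follows formally from the observation that requiring every simple factor to have rank at least $2$ annihilates both $H_1$ and $H_2$ in its conclusion. The only points needing a line of care are the passage through the ergodic decomposition, the deduction of genuine convergence from the fact that every subsequential limit is $\gd_{\{1\}}$, and, in the easy direction, the standard fact that lattices in semisimple Lie groups are uniformly slim.
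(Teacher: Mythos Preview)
Your proposal is correct and follows exactly the route the paper intends: the theorem is stated immediately after Theorem~\ref{thm:g.s.s.-R-W} as a direct consequence, with no separate proof given, and your argument supplies precisely the missing details---observing that the absence of rank one factors forces $H_1=H_2=\{1\}$ in the conclusion of Theorem~\ref{thm:g.s.s.-R-W}, so every ergodic component of every stationary limit is $\gd_{\{1\}}$, and then reading off both the convergence and the failure of uniform slimness.
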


\begin{defn}
Let $G$ be a locally compact group. A subset $D\subset G\times G$ will be called a {\it density tester} if it satisfies the following property: A subgroup $\Gamma\le G$ is dense if and only if $(\Gamma\times\Gamma)\cap D\ne \emptyset$.
\end{defn}

Note that any connected semisimple Lie group $G$ admits a compact density tester. Indeed, we can take two compact sets with non-empty interior $U,V$ lying in a Zassenhaus neighbourhood of $G$ where the logarithm is well defined such that for any $u\in U,v\in V$, we have that $\log(u)$ and $\log(v)$ generate the Lie algebra of $G$. Then it follows that for any such pair the group $\langle u,v\rangle$ is dense in $G$ (see \cite{dense} or \cite{Zuk} for details). Thus $D=U\times V$ is a density tester.

\begin{defn}
Let $G=G_1\times\cdots\times G_n$ be a connected semisimple Lie group with simple factors $G_i$. Let $\Gamma\le G$ be a discrete subgroup. We will say that $\Gamma$ is {\it uniformly irreducible} if there are:
\begin{itemize}
\item a compact density tester $D_i$ for $G/G_i$ for $i=1,\ldots,n$  
\item a compact set $B\subset G$
\end{itemize}
such that for every $g\in G$ and $i$, the projection of the finite set $\gC^g\cap B$ to $G/G_i$ admits two points $\gc_i^1,\gc_i^2$ such that $(\gc_i^1,\gc_i^2)\in D_i$.
\end{defn}

\begin{rem}
Obviously a uniformly irreducible subgroup is confined.
It is easy to check that every irreducible lattice in $G$ is uniformly irreducible. It is also not hard to check that a nontrivial normal subgroup of a uniform irreducible lattice is uniformly irreducible.
\end{rem}

\begin{thm}\label{thm:uniform-irreducible}
Let $G=G_1\times\cdots\times G_n,~n\ge 2$ be a connected centre-free semisimple Lie group without compact factors and suppose that at least one of the simple factors of $G$ has Kazhdan's property (T). Let $\Lambda$ be a discrete subgroup of $G$. Then $\Lambda$ is uniformly irreducible if and only if $\Lambda$ is an irreducible lattice in $G$.
\end{thm}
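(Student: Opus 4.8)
The plan is to prove the two implications separately, with the forward direction (uniformly irreducible $\Rightarrow$ irreducible lattice) being the substantive one. For the easy direction, observe that an irreducible lattice $\Lambda$ in $G$ has finite covolume, hence is uniformly slim; moreover irreducibility of $\Lambda$ means exactly that its projection to each $G/G_i$ is dense, and by a Borel density / compactness argument one can choose, uniformly over conjugates $\Lambda^g$, two elements of a fixed bounded set whose projections to $G/G_i$ witness this density via a fixed compact density tester. The existence of compact density testers $D_i$ for $G/G_i$ is guaranteed by the Zassenhaus-neighbourhood construction recalled just before the statement, and uniformity in $g$ follows since the conjugation action of $G$ on $\sub(G)$ has the conjugacy class of $\Lambda$ with compact closure (as $\Lambda$ is uniformly slim) and being a density tester is an open condition.

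For the hard direction, suppose $\Lambda$ is uniformly irreducible. Then in particular $\Lambda$ is uniformly slim, so $\overline{\Lambda^G}\subset\sub(G)$ is compact, and the averaged random walk $\frac{1}{n}\sum_{i=0}^{n-1}\nu_G^{(i)}*\gd_\Lambda$ has a weak-$*$ limit $\mu$, which by Theorem \ref{thm:dSRS} is a discrete stationary random subgroup supported on $\overline{\Lambda^G}$. The first key step is to show that \emph{every} subgroup in $\overline{\Lambda^G}$ is again uniformly irreducible: uniform irreducibility is phrased entirely in terms of the intersection $\Lambda^g\cap B$ with a fixed compact set $B$ and membership of projected pairs in the fixed compact sets $D_i$, and all of these conditions pass to Chabauty limits (here one uses that $\sub_d(G)$ is open and that $B$, $D_i$ are compact, so a limit still has points in $B$ and the limiting pairs still lie in $D_i$; some care is needed because points could in principle escape to infinity or collide, but the density-tester condition with $D_i$ having nonempty interior and the discreteness afforded by Theorem \ref{thm:dSRS} prevent collapse). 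In particular $\mu$-almost every subgroup is uniformly irreducible, hence uniformly slim, so its intersection with every rank one factor is nontrivial, and in fact (after upgrading via the density tester) its projection to every $G/G_i$ is dense, so no discrete conjugate limit can intersect a rank one factor in a non-Zariski-dense way in an incompatible manner.

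The second key step is to run Theorem \ref{thm:stationary-decomposition} on $\mu$: it gives $G=G_\mathcal{I}\times G_\mathcal{H}\times G_\mathcal{T}$, and I claim uniform irreducibility forces $G_\mathcal{T}=\{1\}$ and $G_\mathcal{H}=\{1\}$, so that $\mu$ is an IRS all of whose irreducible factors have rank $\ge 2$ (using the hypothesis $n\ge 2$ together with density of projections: a uniformly irreducible subgroup projects densely to each $G/G_i$, which contradicts a trivial or a properly discrete projection to a factor). Then, since at least one simple factor of $G$ has property (T), Proposition \ref{prop:IRS-lattices} (in the form of Remark \ref{rem:Hartman}, via \cite{HT}) applies: $\mu$ is supported on lattices in $G$, and since it is an irreducible IRS these are irreducible lattices. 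Finally, by Lemma \ref{lem:Wang} irreducible lattices in $G$ are locally rigid, hence isolated points of $\sub(G)$; a stationary random subgroup supported on isolated points is a finite combination of $\mu_\Gamma$'s, and since $\mu$ is supported on $\overline{\Lambda^G}$ which contains $\Lambda$ as a limit, Corollary \ref{cor:Wang} forces $\Lambda$ itself to be an irreducible lattice in $G$. The main obstacle I anticipate is the first step — proving that uniform irreducibility is a closed condition under Chabauty limits, in particular ruling out degenerations in which the two witness points for density in some $G/G_i$ merge or disappear; this is where the compactness of $B$ and $D_i$ and the open-ness of $\sub_d(G)$ must be combined carefully, possibly enlarging $B$ along the way.
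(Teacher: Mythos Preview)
Your proof is correct and follows the same route as the paper's, which is terser: it asserts in one line that conjugate limits of uniformly irreducible groups are uniformly irreducible, then directly cites Theorem \ref{thm:dSRS}, Proposition \ref{prop:IRS-lattices} with Remark \ref{rem:Hartman}, and Corollary \ref{cor:Wang}. Your flagged concern about collision in the Chabauty limit is not a real obstacle: a compact density tester $D_i$ is automatically disjoint from the diagonal of $(G/G_i)\times(G/G_i)$ (otherwise some cyclic subgroup would be dense), so limits of witness pairs remain in $D_i$ and hence have distinct projections.
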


\begin{proof}
Let $\Lambda\le G$ be a uniformly irreducible discrete subgroup. Note that every conjugate limit of a uniformly irreducible group is uniformly irreducible. 
In view of that, it follows from 
Theorem \ref{thm:dSRS}, Proposition \ref{prop:IRS-lattices} and Remark \ref{rem:Hartman} that every ergodic component of a stationary limit of  $\frac{1}{n}\sum_{i=0}^{n-1}\mu_G^{(i)}*\gd_{\gL}$, must be $\nu_\gC$ for some irreducible lattice in $G$. In view of Corollary \ref{cor:Wang} this implies that $\Lambda$ is an irreducible lattice in $G$.
\end{proof}

Let $G$ be as above and let $G_{i}$ be a simple factor. We shall say that a discrete subgroup $\Lambda$ projects uniformly densely to $G_{i}$ if there is a compact set $B\subset G$ and a compact density tester $D$ for $G_{i}$ such that for every $g\in G$ the projection of $\Lambda^g\cap B$ to $G_{i}$ contains two points $\ga,\gb$ such that $(\ga,\gb)\in D$. The following is a variant of Theorem \ref{thm:uniform-irreducible}. We leave the proof as an exercise.

\begin{prop}
Let $G=G_1\times\cdots\times G_n,~n\ge 2$ be a connected centre-free semisimple Lie group without compact factors and with Kazhdan's property (T). Let $\Lambda$ be a discrete confined subgroup of $G$. Suppose in addition that $\Lambda$ projects uniformly densely to every rank one factor of $G$. Then there is a nontrivial semisimple factor $H\lhd G$ which contains all the rank one factors of $G$ such that $\Lambda\cap H$ is a lattice in $H$.
\end{prop}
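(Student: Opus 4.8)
The plan is to run the random walk $\frac{1}{n}\sum_{i=0}^{n-1}\nu_G^{(i)}*\gd_{\gL}$ through the structure theory of the previous sections, as in the proof of Theorem \ref{thm:uniform-irreducible}, and then pull the conclusion back to $\gL$ using Corollary \ref{cor:Wang}. I would first dispose of the degenerate case in which $G$ has no rank one factor: there the extra hypothesis is vacuous and the assertion is precisely the equivalence already established for products of higher rank factors, applied to the uniformly slim $\gL$. So assume $G$ does have a rank one factor; since $G$ has property (T), every rank one factor of $G$ has property (T) and no simple factor of $G$ is of type (L).

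Both hypotheses pass to conjugate limits: a conjugate limit of a uniformly slim subgroup is uniformly slim, and — by the same compactness argument used for uniformly irreducible groups, the witnessing elements staying in a fixed compact set while the density testers are closed — a conjugate limit of $\gL$ still projects uniformly densely, in particular densely, to every rank one factor of $G$; in particular it is nontrivial. Hence, if $\mu$ is an ergodic component of a weak-$*$ limit of $\frac{1}{n}\sum_{i=0}^{n-1}\nu_G^{(i)}*\gd_{\gL}$, then by Theorem \ref{thm:dSRS} $\mu$ is a discrete $\nu_G$-stationary random subgroup supported on $\overline{\gL^G}$, so that a $\mu$-random subgroup $\gD$ is almost surely a discrete conjugate limit of $\gL$ projecting densely to every rank one factor of $G$.

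The core of the argument applies Theorem \ref{thm:stationary-decomposition} to $\mu$, giving $G=G_\mathcal{I}\times G_\mathcal{H}\times G_\mathcal{T}$. Since a discrete subgroup of a nontrivial connected Lie group is never dense, the almost sure dense projection to the rank one factors forces $G_\mathcal{H}$ to be trivial and forces each rank one factor of $G$ into $G_\mathcal{I}$ (it cannot lie in $G_\mathcal{T}$, to which $\mu$ projects trivially). Writing $G_\mathcal{I}=K_1\times\cdots\times K_m$ for the decomposition into $\mu$-irreducible factors, each of rank at least $2$, let $H$ be the product of those $K_l$ containing a rank one factor of $G$; then $H\lhd G$ is nontrivial and contains every rank one factor. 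For each such $K_l$, Corollary \ref{cor:irreducibility} identifies the intersection of $\gD$ with $K_l$ as a nontrivial irreducible discrete Zariski dense IRS $\overline\mu_l$ of $K_l$, and irreducibility together with Theorem \ref{thm:decomposition} makes its projection to every proper factor of $K_l$ — hence to every rank one simple factor of $K_l$ — dense, so non-discrete. As $K_l$ has property (T), Proposition \ref{prop:IRS-lattices} combined with Remark \ref{rem:Hartman} shows $\overline\mu_l$ is supported on irreducible lattices of $K_l$; thus $\gD\cap K_l$ is almost surely an irreducible lattice in $K_l$, and since $\gD$ is a discrete conjugate limit of $\gL$, Corollary \ref{cor:Wang} upgrades this to: $\gL\cap K_l$ is a lattice in $K_l$. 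As the $K_l$ are pairwise disjoint, $\gL\cap H$ contains the lattice $\prod_l(\gL\cap K_l)$ of $H$ and, being discrete, is therefore a lattice in $H$.

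The main obstacle is the bookkeeping of the core step: one must correctly place each rank one simple factor of $G$ inside a higher rank $\mu$-irreducible factor $K_l$ of $G_\mathcal{I}$ and verify that the object induced on $K_l$ is an irreducible IRS of a higher rank group with property (T), which is exactly what makes the St\"uck--Zimmer input (Proposition \ref{prop:IRS-lattices} via Remark \ref{rem:Hartman}) and then local rigidity (Corollary \ref{cor:Wang}) available. Here it is essential to keep the \emph{intersection} and the \emph{projection} of the random subgroup with the $K_l$ (and with their factors) apart: it is the \emph{projection} of $\overline\mu_l$ to the rank one pieces that is dense, hence non-discrete, and this is precisely the hypothesis excluding the rank one / type (L) obstruction. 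A minor point is that one ergodic component of the stationary limit already produces a valid $H$, so no compatibility across components is required.
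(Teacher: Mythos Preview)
Your argument is correct and is precisely the route the paper has in mind: the proposition is stated as ``a variant of Theorem \ref{thm:uniform-irreducible}'' with proof left as an exercise, and you have carried out that variant faithfully---run the random walk, pass to an ergodic stationary limit via Theorem \ref{thm:dSRS}, apply the decomposition Theorem \ref{thm:stationary-decomposition}, use the uniform-density hypothesis (which, as you note, survives in conjugate limits) to kill $G_\mathcal{H}$ and push all rank one factors into $G_\mathcal{I}$, invoke Proposition \ref{prop:IRS-lattices} (with Corollaries \ref{cor:irreducibility} and \ref{cor:properly-ergodic}) on each irreducible piece $K_l$ containing a rank one factor, and then pull back to $\gL$ via Corollary \ref{cor:Wang}. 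The bookkeeping you flag---distinguishing projection from intersection, and checking that each relevant $K_l$ has rank $\ge 2$ so its rank one simple factors are proper---is exactly the content of the exercise.
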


%
%
%

We shall now prove the following alternative:

\begin{thm}\label{thm:u.s.-alternative}
Let $G=G_1\times\cdots\times G_n,~n\ge 2$ be a connected centre-free semisimple Lie group without compact factors and with Kazhdan's property (T).
Let $\gL\le G$ be a discrete confined group. Then exactly one of the following holds:
\begin{enumerate}
\item[(i)]
There is a nontrivial proper semisimple factor $H\lhd G$ such that $\gL\cap H$ is a lattice in $H$.
\item[(ii)]
There is a proper nontrivial semisimple factor $H\lhd G$ which is a product of rank one simple factors, $H=\prod_{i=j}^kG_{i_j}$, and a conjugate limit $\gD$ of $\gL$, such that $\gD$ is a discrete confined subgroup of $H$.
Furthermore $\gD$ intersects every simple factor of $H$ in a confined thin subgroup.
\end{enumerate}
\end{thm}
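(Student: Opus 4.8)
The plan is to run the machinery of the previous sections on a stationary limit of the conjugacy class of $\gL$, and then translate the conclusion about the stationary limit back into a statement about a single conjugate limit of $\gL$. First I would note that the two alternatives (i) and (ii) are mutually exclusive: in case (i) the factor $H$ (being higher rank or having property (T)) contains an essentially unique commensurability class of lattices and cannot be a product of rank one factors carrying a thin subgroup, while a uniformly slim subgroup of $G$ which intersects a rank one factor in a \emph{thin} (infinite covolume) subgroup cannot simultaneously intersect some semisimple factor in a lattice — here one uses that $\gL$ itself, being uniformly slim, and all its conjugate limits are uniformly slim, together with Corollary \ref{cor:Wang} to propagate a lattice intersection back to $\gL$. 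So it suffices to show that \emph{at least} one of (i), (ii) holds.

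Assume (i) fails, i.e.\ for every nontrivial proper semisimple factor $H\lhd G$ the intersection $\gL\cap H$ is not a lattice in $H$; we must produce the conjugate limit $\gD$ of alternative (ii). I would first argue, using Theorem \ref{thm:simple-slim} applied to the projection of $\gL$ (or its conjugate limits) to each higher rank simple factor, that $\gL$ cannot intersect any higher rank factor in a lattice, and more generally — invoking Theorem \ref{thm:g.s.s.-R-W} — that if $\mu$ is an ergodic component of a stationary limit of $\frac1n\sum_{i=0}^{n-1}\nu_G^{(i)}*\gd_\gL$ then $\mu$ is supported on discrete subgroups contained in $H_1\times H_2$ with $H_1$ a product of rank one factors and $H_2$ of type (L), with the projection to $H_2$ an IRS whose irreducible factors are higher rank and thin in each factor. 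Since $\gL$ has property (T) here we have ruled out higher rank factors in the relevant sense: property (T) excludes the type-(L) higher rank factors being nontrivial as well, so in fact $H_2=\{1\}$ and $\mu$ is supported on discrete uniformly slim subgroups of the product $H_1=\prod_{j}G_{i_j}$ of rank one factors, each intersected in a Zariski dense subgroup, and thin in each property-(T) rank one factor.

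The remaining step is to upgrade this measure-theoretic conclusion to the existence of a \emph{single} conjugate limit $\gD$ with the desired properties. Here I would use that the stationary limit $\mu$ is supported on $\overline{\gL^G}\cap\sub_d(G)$, and that (by Lemma \ref{lem:WUD}, or rather weak uniform discreteness of $\text{d-SRS}(G)$) the set of discrete subgroups that are uniformly slim with the bounds coming from the fixed compact set witnessing uniform slimness of $\gL$ is closed; since $\mu$ gives full mass to subgroups of $H_1$ (a proper factor, as $n\ge2$ and not all factors are higher rank — if they were we would be in the situation of Theorem \ref{thm:rw-rank-1} forcing $\mu=\gd_{\{1\}}$, contradicting uniform slimness), there is a point in the support, i.e.\ an actual conjugate limit $\gD\in\overline{\gL^G}$, lying in $H_1$. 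Being a conjugate limit of a uniformly slim group, $\gD$ is uniformly slim; and the structural properties — Zariski density of the intersection with each simple factor of $H_1$ and thinness in each property-(T) factor — pass to $\gD$ either by choosing the support point inside the relevant closed, conjugation-invariant sets or by a direct Chabauty-limit argument (Zariski density is preserved under Chabauty limits of discrete subgroups in $\sub_d$ by a dimension/semicontinuity argument, and thinness, i.e.\ infinite covolume, is preserved because a Chabauty limit of a net of lattices with bounded covolume would again be a lattice, contradicting the stationary analysis). The main obstacle I expect is precisely this last transfer: matching the almost-everywhere structural statements about $\mu$ with a genuine point of its support, and verifying that Zariski-density and thinness are Chabauty-closed conditions on the relevant compact piece — this is where the weak uniform discreteness and the local rigidity inputs (Lemma \ref{lem:Wang}, Corollary \ref{cor:Wang}) do the real work.
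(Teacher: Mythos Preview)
Your overall strategy --- form a stationary limit, apply the decomposition Theorem \ref{thm:stationary-decomposition} (via Theorem \ref{thm:g.s.s.-R-W}), use property (T) to kill the type-(L) higher rank piece, and then pick a conjugate limit in the support of $\mu$ --- is exactly the route the paper takes. Your worry about ``matching the almost-everywhere statements with a genuine point of the support'' is misplaced: the support of $\mu$ has full measure, so it meets the full-measure set on which the structural conclusions of Theorem \ref{thm:stationary-decomposition} hold; the paper simply says ``let $\gD$ be a generic subgroup in the support of $\mu$'' and moves on. No Chabauty-closedness of Zariski-density or thinness is needed at this stage.

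The genuine gap is elsewhere. After picking a generic $\gD\le G_\mathcal{H}$ in the support, you know that $\gD$ intersects each rank one simple factor $G_{i_j}$ of $G_\mathcal{H}$ in a Zariski-dense subgroup, and (since property (T) lets you invoke Corollary \ref{cor:Wang}) that each such intersection is thin if we are not in case (i). But the statement of (ii) requires these intersections to be \emph{uniformly slim}, and nothing in your argument gives that: a thin Zariski-dense discrete subgroup of a rank one group need not be uniformly slim (think of a Schottky group in $\mathrm{SO}(n,1)$). The stationary decomposition says nothing about uniform slimness of the intersections with the individual factors of $G_\mathcal{H}$.

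The paper closes this gap with an extra step you do not have, namely Lemma \ref{lem:u.s.-intersections}: since $\gD$ projects discretely to each factor of $G_\mathcal{H}$, one passes to a \emph{further} conjugate limit $\gD'$ of $\gD$ (hence of $\gL$) whose projection to a factor $G_i$ is trivial unless $\gD\cap G_i$ was already uniformly slim. One then takes $H$ to be the product of the surviving factors. The new $\gD'$ is still uniformly slim (being a conjugate limit of $\gL$), hence $H$ is nontrivial; for each surviving factor $G_i$ one has $\gD'\cap G_i\supseteq \gD\cap G_i$, so the intersection is uniformly slim, and if it were a lattice Corollary \ref{cor:Wang} would again force case (i). This is the step that produces the ``uniformly slim thin'' intersections promised in (ii), and it is missing from your outline.
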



We shall rely on the following lemma which is also of independence interest. 

\begin{lem}\label{lem:u.s.-Z-dense}
A confined discrete subgroup of a simple Lie group is Zariski dense.
\end{lem}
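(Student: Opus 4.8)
The plan is to argue by contradiction, running the random walk from $\gd_\gL$ and feeding the outcome into the decomposition theorem. Assume $\gL$ is uniformly slim but $\overline{\gL}^{Z}\subsetneq G$. First I would pass to an ergodic component $\mu$ of a weak-$*$ limit of $\frac1n\sum_{i=0}^{n-1}\nu_G^{(i)}*\gd_\gL$. By Theorem \ref{thm:dSRS}, $\mu$ is a discrete stationary random subgroup, and every subgroup in its support is a (discrete) conjugate limit of $\gL$; since a conjugate limit of a uniformly slim group is uniformly slim while $\{1\}$ is not, this already gives $\mu\neq\gd_{\{1\}}$.

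The core of the argument is to show that $\mu$-almost every subgroup is simultaneously \emph{not} Zariski dense and Zariski dense. For the first assertion I would use Chevalley's theorem together with complete reducibility of algebraic representations (as in the proof of Lemma \ref{lem-StatioHS}) to produce an algebraic representation $G\to\GL(W)$ with $W^{G}=0$ together with a line $[w_{0}]\in\BP(W)$ fixed by $\gL$: start from a Chevalley line for $\overline{\gL}^{Z}$, split off the $G$-fixed part, and use that properness of $\overline{\gL}^{Z}$ forces a nonzero projection of that line to the sum of the nontrivial isotypic components. Since $G$ is simple it has no nontrivial rational characters, so $\BP(W)$ has no $G$-fixed point and hence $\mathrm{Stab}_{G}([w])\subsetneq G$ for \emph{every} $[w]\in\BP(W)$. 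Now if $\gD=\lim_{n}\gL^{g_{n}}$ is a conjugate limit, then $\gL^{g_{n}}$ fixes $[g_{n}w_{0}]$; passing to a subsequence with $[g_{n}w_{0}]\to[w]$ (possible since $\BP(W)$ is compact), the limit $\gD$ fixes $[w]$, so $\gD\subseteq\mathrm{Stab}_{G}([w])$ is not Zariski dense.

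For the second assertion I would apply Theorem \ref{thm:stationary-decomposition} to the simple group $G$: since $\mu\neq\gd_{\{1\}}$, the decomposition $G=G_{\mathcal I}\times G_{\mathcal H}\times G_{\mathcal T}$ must collapse to $G=G_{\mathcal I}$ or $G=G_{\mathcal H}$. In the former case $\mu$ is an invariant random subgroup and is supported on Zariski dense subgroups by the Borel density theorem for invariant random subgroups \cite{7s,GL}; in the latter case the final assertion of Theorem \ref{thm:stationary-decomposition} says that a $\mu$-random subgroup is Zariski dense almost surely. Either way $\mu$-almost every subgroup is Zariski dense, contradicting the previous paragraph. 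The step I expect to need the most care is the one linking $\gL$ back to $\mu$, namely that non-Zariski-density is inherited by conjugate limits — this is what makes the contradiction with Theorem \ref{thm:stationary-decomposition} possible, and it is exactly where the Chevalley bookkeeping above is used; a secondary point to check is that the relevant conjugate limits remain discrete (so that the stationary-random-subgroup formalism and Theorem \ref{thm:dSRS} apply, and so that uniform slimness of the limits — hence $\mu\neq\gd_{\{1\}}$ — is legitimate), which follows from the openness of $\sub_{d}(G)$ and the absence of small subgroups in $G$.
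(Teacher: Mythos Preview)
Your argument is correct but takes a different route from the paper's. The paper splits on the rank: for $\rank(G)\ge 2$ it invokes Theorem~\ref{thm:simple-slim} (uniformly slim $\Leftrightarrow$ lattice) and then classical Borel density, while for $\rank(G)=1$ it gives a direct geometric proof with no random-walk input --- reduce to $\gL$ contained in a proper reductive $H$ by conjugating away the unipotent radical in the parabolic case, then use Mostow's theorem to find a totally geodesic $H$-invariant $Y\subset G/K$, and observe that along a geodesic orthogonal to $Y$ every element of $H$ has displacement tending to infinity (by negative curvature), contradicting uniform slimness. Your approach is uniform in the rank: the Chevalley trick shows no conjugate limit of $\gL$ is Zariski dense, while Theorem~\ref{thm:stationary-decomposition} forces a nontrivial ergodic discrete stationary random subgroup of a simple $G$ to be a.s.\ Zariski dense. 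In higher rank this is actually lighter than the paper's route, since you only need Zariski density rather than the full lattice conclusion of Theorem~\ref{thm:simple-slim}; in rank one the trade reverses, as the paper's Mostow-plus-curvature argument is elementary and self-contained whereas your appeal to Theorem~\ref{thm:stationary-decomposition} for rank-one $G$ implicitly uses Theorem~\ref{prop:rank-one} and Lemma~\ref{lem-PIRS}. One small wording fix: the discreteness of the $\mu$-a.e.\ conjugate limit is supplied by Theorem~\ref{thm:dSRS}, not by openness of $\sub_d(G)$ --- openness goes the wrong way (it says neighbours of a discrete group are discrete, not that Chabauty limits of discrete groups are).
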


\begin{proof}
Let $G$ be a simple group and $\gL\le G$ a discrete subgroup. If $\text{rank}(G)\ge 2$ then by Theorem \ref{thm:simple-slim}, $\gL$ is confined if and only if it is a lattice, hence the lemma follows from the Borel density theorem. Suppose therefore that $\text{rank}(G)=1$ and
suppose that $\gL$ is not Zariski dense. Then $\gL$ is contained in a maximal proper algebraic subgroup which is either reductive or parabolic (see \cite{BT}). In the latter case pick a Langlands decomposition of the parabolic and a central element in the Levi subgroup which acts by expansion on the unipotent radical of the parabolic. Then by applying iterative conjugations by this element all the elements in $\gL$ which do not belong to the Levi are taken to infinity and we obtain a conjugate limit which is contained in the Levi subgroup. Thus it is enough to deal with the case that $\gL\le H$ for some reductive subgroup $H\le G$. Let $X=G/K$ be the symmetric space of $G$. By a theorem of Mostow \cite{Mostow} there is a totally geodesic subspace $Y\subset X$ which is $H$-invariant and is a model for the symmetric space of $H$ ($Y$ is a single point in case $H$ is compact). Let $c:[0,\infty)\to X$ be any geodesic in $X$ which starts at a point $y\in Y$ such that $\dot{c}(0)$ is orthogonal to $Y$. Since $\text{rank}(G)=1$, $X$ is negatively curved and hence the displacement of any $h\in H$ goes to infinity along $c$. It follows that the injectivity radius of $\Gamma\backslash X$ goes to infinity along $c(t)$.
\end{proof}

%
%
%

\begin{lem}\label{lem:u.s.-intersections}
Let $G=G_1\times\cdots\times G_n$ be a connected semisimple Lie group and $\gL\le H$ a discrete subgroup that projects discretely to the factors. Then $\gL$ admits a conjugate limit $\gD$ which is discrete and satisfies the following properties:
\begin{itemize}
\item
The projection of $\gD$ to every factor is contained in the projection of $\gL$ to the same factor. 
\item
The projection of $\gD$ to a factor of $G$ is trivial unless the intersection of $\gL$ with that factor is confined.
\end{itemize}
\end{lem}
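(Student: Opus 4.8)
The plan is to produce the conjugate limit $\gD$ factor by factor, using an expanding element in each factor to "kill" the part of $\gL$ that escapes to infinity in that factor, and to iterate this over all $n$ simple factors. First I would reorder the factors so that $G_1,\ldots,G_m$ are exactly those simple factors $G_i$ for which $\gL\cap G_i$ is \emph{not} uniformly slim (equivalently, for which $\Isom(G_i/K_i)$-translates push some nontrivial elements of $\gL\cap G_i$ off to infinity), and $G_{m+1},\ldots,G_n$ are those for which $\gL\cap G_i$ is uniformly slim. The target $\gD$ will be obtained as a limit of conjugates $\gL^{g_t}$ where $g_t=(g_t^{(1)},\ldots,g_t^{(m)},1,\ldots,1)$ and each $g_t^{(i)}$ is a suitable sequence in $G_i$ diverging to infinity.

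The main step is the following: fix a simple factor $G_i$ with $i\le m$, and let $\pi_i\colon G\to G_i$ be the projection. Since $\gL\cap G_i$ is not uniformly slim, there is a sequence $h_t\in G_i$ (which we can take inside $\Isom(G_i/K_i)$, hence after bounded modification inside $G_i$ itself since $G_i$ is center-free) such that the displacement of every nontrivial element of $(\gL\cap G_i)^{h_t}$ at the basepoint tends to infinity; equivalently $(\gL\cap G_i)^{h_t}\to\{1\}$ in $\sub(G_i)$. I would choose $g_t^{(i)}=h_t$. The key observation is that for any element $\ga=(\ga_1,\ldots,\ga_n)\in\gL$ whose $G_i$-coordinate $\ga_i$ is nontrivial, its conjugate $\ga^{g_t}$ has $G_i$-coordinate $\ga_i^{h_t}\to\infty$, so $\ga^{g_t}$ leaves every compact set and does not survive in the Chabauty limit $\gD$; while if $\ga_i=1$ then the $G_i$-coordinate of $\ga^{g_t}$ stays trivial. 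Hence in $\gD$ the projection to $G_i$ is trivial. For the factors $G_i$ with $i>m$ we take $g_t^{(i)}=1$, so the projection of $\gD$ to those factors is just the limit of the projections of $\gL$, in particular contained in $\pi_i(\gL)$. One has to be slightly careful that the \emph{elements} surviving in $\gD$ all have trivial coordinate in each of $G_1,\ldots,G_m$ simultaneously; this is immediate because an element $\ga\in\gL$ survives only if for \emph{every} $i\le m$ its $G_i$-coordinate is already trivial (otherwise that coordinate is sent to infinity), so $\gD$ is in fact a conjugate limit of $\gL\cap(G_{m+1}\times\cdots\times G_n)$ up to the (bounded) corrections. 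Passing to a subsequence of $t$, the limit $\gD=\lim\gL^{g_t}$ exists in $\sub(G)$ by compactness of the Chabauty space, and $\gD$ is discrete because all its elements, being limits of elements of $\gL^{g_t}$ with uniformly bounded $G_i$-coordinates for $i\le m$ and the $\gL\cap(G_{m+1}\times\cdots\times G_n)$ part unperturbed, stay uniformly away from $1$ — more precisely, $\gD$ is a conjugate limit of the discrete group $\gL$ and hence discrete, as conjugate limits of discrete subgroups can fail to be discrete only via accumulation, which is ruled out here since we never perturb in a non-uniformly-slim direction without sending the corresponding elements to infinity. Finally the first bullet (containment of projections) holds because for the unperturbed factors the projection only shrinks under Chabauty limits, and for the perturbed factors it becomes trivial.

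The step I expect to be the genuine obstacle is verifying that $\gD$ is \emph{discrete} and simultaneously that the projection to each perturbed factor is exactly trivial rather than merely "small": one must rule out that elements of $\gL$ with nontrivial $G_i$-coordinate but with that coordinate converging (after conjugation by a non-divergent part of $h_t$) to something nontrivial leak into $\gD$. This requires choosing the $h_t$ carefully — ideally using a single expanding one-parameter subgroup or a sequence along which the relevant matrix coefficients grow, so that \emph{every} nontrivial element of $\gL\cap G_i$ is expanded, not just a cofinite subset — and then noting, via the Zariski-density type arguments already available (e.g. that the $G_i$-coordinates of $\gL$ not killed would form a nontrivial discrete subgroup of $G_i$ normalized by a perturbed limit, forcing it to be uniformly slim against our choice of indexing), that nothing survives. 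Once the divergence is arranged uniformly over $\gL\cap G_i$, discreteness of $\gD$ follows from the fact that it is a conjugate limit of a discrete group whose potentially-accumulating directions have all been pushed to infinity, and the two bullet points are then read off directly from the construction.
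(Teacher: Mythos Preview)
Your approach coincides with the paper's: conjugate within each factor $G_i$ for which $\gL\cap G_i$ is not uniformly slim and pass to a Chabauty limit. The one organizational difference is that the paper carries this out \emph{iteratively} rather than simultaneously. It sets $\gL_0=\gL$ and, for $i=1,\dots,n$ in turn, if $\gL_{i-1}\cap G_i$ is not uniformly slim it replaces $\gL_{i-1}$ by a $G_i$-conjugate limit $\gL_i$ with $\gL_i\cap G_i=\{1\}$, and otherwise sets $\gL_i=\gL_{i-1}$; the output is $\gD=\gL_n$. Processing one factor at a time dissolves most of the bookkeeping you worried about. At step $i$ one conjugates only by elements of $G_i$, which commute with every $G_j$ for $j\ne i$, so $\pi_j(\gL_i)\subset\pi_j(\gL_{i-1})$ for all $j\ne i$ (the latter being discrete, hence closed). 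This yields the first bullet at once, keeps the hypothesis ``projects discretely to every factor'' intact through the induction, and makes discreteness of each $\gL_i$ automatic: if $\gb\in\gL_i$ is close to $1$ then $\pi_j(\gb)\in\pi_j(\gL_{i-1})$ is close to $1$ for every $j\ne i$, hence trivial, so $\gb$ arises as a limit of elements of $(\gL_{i-1}\cap G_i)^{g_t}\to\{1\}$ and therefore $\gb=1$. Your simultaneous construction can be reorganized into this iterated one; doing so makes the diagonal extraction and the discreteness argument of your middle paragraph unnecessary.

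One correction to the obstacle you isolate at the end. You are right that $(\gL\cap G_i)^{h_t}\to\{1\}$ does not by itself force $\ga_i^{h_t}\to\infty$ for every $\ga\in\gL$ with $\pi_i(\ga)\ne 1$, but the remedies you propose are aimed at the wrong target. The difficulty is not that some elements of $\gL\cap G_i$ escape expansion; it is that $\ga_i$ lies in $\pi_i(\gL)$, which can be strictly larger than $\gL\cap G_i$, so a conjugating sequence tailored only to $\gL\cap G_i$ need not move $\ga_i$ at all (think of $\gL\cap G_i=\{1\}$ with $\pi_i(\gL)$ a cocompact lattice). Neither choosing an expanding one-parameter subgroup nor the ``normalized by a perturbed limit'' idea touches this. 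The paper's three-line proof is equally terse on this point --- it only records $\gL_i\cap G_i=\{1\}$ at each step and then asserts the conclusion --- so your write-up is not missing an idea that the paper supplies; you have simply identified the one place where both arguments are brief.
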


\begin{proof}
Let $\gL_0=\gL$. For each $i=1,\ldots,n$ at its turn, if $\gL_{i-1}\cap G_i$ is not confined then we replace $\gL_{i-1}$ with a $G_i$-conjugate limit $\gL_i$ such that $\gL_i\cap G_i=\{1\}$. If $\gL_{i-1}\cap G_i$ is confined we let $\gL_i=\gL_{i-1}$. Then $\gD=\gL_n$ satisfies the desired requirements.
\end{proof}

\begin{proof}[Proof of Theorem \ref{thm:u.s.-alternative}]
Let $\nu$ be an ergodic component of a weak limit of  $\frac{1}{n}\sum_{i=0}^{n-1}\mu_G^{(i)}*\gd_{\gL}$, and consider the decomposition of $G$ according to $\nu$ given by Theorem \ref{thm:stationary-decomposition}. If $G_\mathcal{I}$ is nontrivial 
then by Proposition \ref{prop:IRS-lattices} and Remark \ref{rem:commensurable} the restriction of $\nu$ to $G_\mathcal{I}$ is of the form $\nu_\gC$ for some lattice $\gC\le G_\mathcal{I}$. In view of Corollary \ref{cor:Wang}, $\gL$ intersects $G_\mathcal{I}$ by a lattice.

Consider now the case that $G_\mathcal{I}$ is trivial.
Let $\gD$ be a generic subgroup in the support of $\nu$. Then $G$ decomposes as $G=G_\mathcal{H}\times G_\mathcal{T}$ such that $\gD\le G_\mathcal{H}$, all the factors of $G_\mathcal{H}$ are of rank one, the projection of $\gD$ to every factor of $G_\mathcal{H}$ is discrete and the intersection of $\gD$ with every factor of $G_\mathcal{H}$ is Zariski dense. If the intersection of $\gD$ with one of the simple factors of $G_\mathcal{H}$ is a lattice then by Corollary \ref{cor:Wang} also $\gL$ intersects this factor by a lattice. 
Finally we apply Lemma \ref{lem:u.s.-intersections} to the confined group $\gD$ and the theorem follows. 
\end{proof}

\begin{rem}
The assumption that $G$ in Theorem \ref{thm:u.s.-alternative} has property (T) is made because we do not know if the analog of the
Stuck--Zimmer theorem holds for higher rank groups of type (L). If that analog holds then one can remove the property (T) assumption without changing the statement. 
Without knowing the answer to this question one can still drop the property (T) assumption by
allowing in Case (ii) of the theorem that $H$ admits beside the rank one factors also higher rank semisimple factors $H_i$ of type (L), and $\gD\cap H_i$ is thin in $H_i$ and confined and projects densely to the simple factors $H_i$. We do not know however if such `irreducible' thin confined discrete subgroups exist. 
\end{rem}

\end{document}